\documentclass[12pt]{amsart}
\usepackage{amssymb}
\usepackage{mathrsfs}
\usepackage{stmaryrd}
\usepackage{eucal}
\usepackage[all]{xy}
\usepackage[margin=1in]{geometry} 
\usepackage{hyperref}
\usepackage[dvipsnames]{xcolor}
\usepackage[shortlabels]{enumitem}
\hypersetup{bookmarksdepth=2}
\hypersetup{colorlinks=true}
\hypersetup{linkcolor=blue}
\hypersetup{citecolor=blue}
\hypersetup{urlcolor=blue}

\newtheorem{thma}{Theorem}

\numberwithin{equation}{section}
\newtheorem{theorem}[equation]{Theorem}

\newtheorem{proposition}[equation]{Proposition}
\newtheorem{lemma}[equation]{Lemma}
\newtheorem{corollary}[equation]{Corollary}

\theoremstyle{definition}
\newtheorem{rmk}[equation]{Remark}
\newenvironment{remark}[1][]{\begin{rmk}[#1] \pushQED{\qed}}{\popQED \end{rmk}}
\newtheorem{eg}[equation]{Example}
\newenvironment{example}[1][]{\begin{eg}[#1] \pushQED{\qed}}{\popQED \end{eg}}
\newtheorem{defnaux}[equation]{Definition}

\newcommand{\bA}{\mathbf{A}}

\newcommand{\bG}{\mathbf{G}}

\newcommand{\fL}{\mathfrak{L}}

\newcommand{\cM}{\mathcal{M}}

\newcommand{\cR}{\mathcal{R}}

\newcommand{\bS}{\mathbf{S}}
\newcommand{\cS}{\mathcal{S}}

\newcommand{\bV}{\mathbf{V}}
\newcommand{\cV}{\mathcal{V}}

\newcommand{\bZ}{\mathbf{Z}}

\newcommand{\be}{\mathbf{e}}

\newcommand{\stacks}[1]{\cite[\href{http://stacks.math.columbia.edu/tag/#1}{Tag~#1}]{stacks}}
\newcommand{\DOI}[1]{\href{http://doi.org/#1}{\color{purple}{\tiny\tt DOI:#1}}}
\newcommand{\arxiv}[1]{\href{http://arxiv.org/abs/#1}{{\tiny\tt arXiv:#1}}}

\newcommand{\defn}[1]{\textit{#1}}
\let\ol\overline
\let\ul\underline
\let\lbb\llbracket
\let\rbb\rrbracket
\DeclareMathOperator{\ud}{ud}
\DeclareMathOperator{\ed}{ed}

\DeclareMathOperator{\im}{im}
\DeclareMathOperator{\rank}{rank}
\DeclareMathOperator{\Sym}{Sym}

\DeclareMathOperator{\Spec}{Spec}
\DeclareMathOperator{\Hom}{Hom}
\DeclareMathOperator{\Frac}{Frac}
\DeclareMathOperator{\uIsom}{\underline{Isom}}
\DeclareMathOperator{\uHom}{\underline{Hom}}
\renewcommand{\int}{\operatorname{int}}
\newcommand{\orb}{\mathrm{orb}}
\newcommand{\umu}{\ul{\smash{\mu}}} 
\newcommand{\unu}{\ul{\smash{\nu}}}
\newcommand{\ulambda}{\ul{\smash{\lambda}}}

\newcommand{\type}{\mathrm{type}} 
\newcommand{\lpp}{(\!(}
\newcommand{\rpp}{)\!)}
\newcommand{\GL}{\mathbf{GL}}
\newcommand{\Gr}{\mathbf{Gr}}
\renewcommand{\phi}{\varphi}
\renewcommand{\emptyset}{\varnothing}
\newcommand{\id}{\mathrm{id}}

\title{Improved unirationality for $\GL$-varieties}

\author{Arthur Bik}
\address{D.E.~Shaw \& Co., New York}
\email{\href{mailto:arthur.bik@deshaw.com}{arthur.bik@deshaw.com}}
\urladdr{\url{http://arthurbik.nl}}

\author{Jan Draisma}
\address{University of Bern, Switzerland}
\email{\href{mailto:jan.draisma@math.unibe.ch}{jan.draisma@math.unibe.ch}}
\urladdr{\url{https://math-unibe.ch/jdraisma/}}

\author{Rob Eggermont}
\address{Eindhoven University of Technology, The Netherlands}
\email{\href{mailto:r.h.eggermont@tue.nl}{r.h.eggermont@tue.nl}}
\urladdr{\url{https://www.tue.nl/en/research/researchers/rob-eggermont/}}

\author{Andrew Snowden}
\address{Department of Mathematics, University of Michigan, Ann Arbor, MI}
\email{\href{mailto:asnowden@umich.edu}{asnowden@umich.edu}}
\urladdr{\url{http://www-personal.umich.edu/~asnowden/}}

\thanks{JD was partially supported by Swiss National
Science Foundation project grant 200021-227864.
RE was partially supported by the NWO Veni grant entitled {\em Stability and structure in infinite-dimensional spaces}, project number 016.Veni.192.113.
AS was supported by NSF grants DMS-1453893 and DMS-2301871.
}
  
\date{June 2, 2026}

\begin{document}

\begin{abstract}
A $\GL$-variety is a typically infinite dimensional variety equipped with a suitable
action of the infinite general linear group $\GL$. In earlier work, we established the
unirationality theorem: an irreducible $\GL$-variety admits a dominant map
from a particularly simple $\GL$-variety, namely,
the product of an irreducible finite-dimensional variety with trivial
$\GL$-action and an infinite-dimensional affine space on which $\GL$
acts linearly. The main result of this paper states that this map can in fact be
constructed to be surjective rather than merely dominant. An immediate
application is that secant varieties to varieties of tensors, which are
typically constructed as image closures of certain $\GL$-equivariant maps,
are in fact also images of (more complicated) $\GL$-equivariant maps. We
derive several consequences of this improved unirationality theorem.
\end{abstract}

\maketitle
\tableofcontents

\section{Introduction} \label{s:intro}

\subsection{Overview}

Fix an algebraically closed field $K$ of characteristic~0. A \defn{$\GL$-variety} is a (typically infinite dimensional) affine variety over $K$ equipped with an action of the infinite general linear group $\GL$, such that the coordinate ring is a $\GL$-finitely generated $\GL$-algebra; see \S\ref{ssec:GLvar} and \S\ref{s:bg} for details. In \cite{polygeom}, we introduced this class of varieties and proved several fundamental results about them, and in \cite{imgclosure}, we proved some deeper results. This paper builds on these previous two papers.

One of the important results from \cite{polygeom} is the \emph{unirationality theorem}: if $X$ is an irreducible $\GL$-variety, then there is a dominant $\GL$-equivariant map $\phi \colon B \times \bA^{\ulambda} \to X$, where $B$ is an irreducible finite-dimensional variety with trivial $\GL$-action and $\bA^{\ulambda}$ is an affine space on which $\GL$ acts linearly. Here $\ulambda$ is a finite tuple of permutations that encodes the action of $\GL$; see \S\ref{ssec:GLvar}. The main goal of the current paper is to establish a much improved version of the unirationality theorem, which says that that map $\phi$ can be chosen to be surjective. To see why this is surprising, we first discuss an application to border strength, but we stress that our results are far more general than this application. 

\subsection{Application to border strength}

Let $V$ be a finite dimensional
vector space over $K$, let $d \ge 2$ be an integer, and consider an element $f$ of $S^d V^*$, i.e.,
a homogeneous form of degree $d$ on $V$. The {\em strength} of $f$
is the minimal $r$ for which there is an expression
\begin{displaymath}
f=\sum_{i=1}^r g_i h_i
\end{displaymath}
where, for every $i$, $g_i \in S^{e_i} V^*$ and $h_i \in S^{d-e_i} V^*$ for
some $e_i$ strictly between $0$ and $d$. We stress that the $e_i$ may vary with $i$. 
Strength is a complexity
measure of a homogeneous polynomial that plays a fundamental role in commutative
algebra \cite{ah}, number theory \cite{schmidt}, and algebraic combinatorics
\cite{kazi}. Let $X_{d,r}(V)$ be the strength $\leq r$ locus in $S^d V^*$.

When $d=2$, so that $f$ is a quadratic form, the strength of $f$ equals
$\lceil \rank(\beta)/2 \rceil$, where $\beta$ is the bilinear
form associated to $f$, and hence the strength of $f$ can be detected using minors. In
particular, $X_{2,r}(V)$ is a Zariski-closed subset of $S^2 V^*$ for every
$r$ and $V$. For $d=3$, strength is the same as {\em q-rank} \cite{des} or
{\em slice rank} \cite{bbov}, and $X_{3,r}(V)$ is still Zariski-closed for every $r$
and $V$ \cite{des}. Furthermore, for $r=1$ and arbitrary $d$, $X_{d,1}(V)$ is 
the variety of reducible forms of degree $d$ and also Zariski-closed. 
However, for most pairs $(d,r)$ it is expected that $X_{d,r}(V)$ is
not Zariski closed when $\dim(V)$ is sufficiently large. In particular,
this is true for $d=4$ and $r=3$ \cite{bbov2}. 

For $d,r,V$ arbitrary, let
$\ol{X}_{d,r}(V)$ be the Zariski closure of $X_{d,r}(V)$. Elements of $\ol{X}_{d,r}(V)$
are said to have \defn{border strength} $\le r$. Understanding the
varieties $\ol{X}_{d,r}(V)$ is a difficult and important problem. Our
main theorem on $\GL$-varieties yields an interesting new result about
the irreducible components of these varieties, which we now describe.

Fix a sequence $\be=(e_1,\ldots,e_r) \in
\{1,\ldots,d-1\}^r$ and consider the locus $X_{d,r,\be}(V) \subseteq
X_{d,r}(V)$ consisting of all elements of the form
\begin{displaymath} \sum_{i=1}^r g_i h_i \text{ where } g_i \in S^{e_i} V^*, h_i \in S^{d-e_i} V^*. \end{displaymath}
Clearly, $\ol{X}_{d,r}(V)$ is the union of $\ol{X}_{d,r,\be}(V)$ over all $\be \in
\{1,\ldots,d-1\}^r$, and the latter varieties are irreducible. Moreover, for $d,r$ fixed and
$\dim(V)$ sufficiently large, it is not hard to show that the $\ol{X}_{d,r,\be}(V)$ are precisely the
irreducible components of $\ol{X}_{d,r}(V)$, with $\ol{X}_{d,r,\be}(V)=\ol{X}_{d,r,\be'}(V)$ if and
only if $\be'$ and $\be$ differ by a permutation. 

\begin{theorem} \label{thm:brank2}
Given $d$ and $r$ and $\be \in \{1,\ldots,d-1\}^r$, there exists an
irreducible affine variety $B=B(d,r,\be)$ such that for any finite
dimensional vector space $V$ there is an integer $m \ge 0$ and a
surjective map of varieties $B \times \bA^m \to \ol{X}_{d,r,\be}(V)$.
\end{theorem}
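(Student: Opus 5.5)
The plan is to deduce Theorem~\ref{thm:brank2} from the paper's main result, the improved unirationality theorem: for an irreducible $\GL$-variety $X$ there is a surjective $\GL$-equivariant morphism $\phi\colon B\times\bA^{\ulambda}\to X$, with $B$ an irreducible finite-dimensional variety and $\bA^{\ulambda}$ a linear $\GL$-space.

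First, pass to the infinite-dimensional setting. Let $V_\infty=K^\infty$ be the standard representation and consider the $\GL$-variety $S^d V_\infty^*$. Inside it, let $X=\ol{X}_{d,r,\be}$ be the closure of the image of the equivariant map
\[
\prod_{i=1}^r \bigl(S^{e_i}\times S^{d-e_i}\bigr)\to S^d,\qquad (g_i,h_i)\mapsto \textstyle\sum_i g_ih_i .
\]
This $X$ is an irreducible $\GL$-variety, being a closed $\GL$-stable subvariety of a $\GL$-variety and the closure of the image of an irreducible one. Applying the main theorem gives $B$ irreducible and $\ulambda$ with $\phi\colon B\times\bA^{\ulambda}\to X$ surjective and $\GL$-equivariant. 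Then set $B(d,r,\be):=B$; it depends only on $(d,r,\be)$. If $B$ is not affine, replace it by a suitable affine open cover refinement, or use that the construction of $B$ in the main theorem can be taken affine.

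Second, specialize to a finite-dimensional $V$ with $n=\dim V$, identifying $V^*$ with $K^n\subset K^\infty$. The key compatibility is that $X$ is the inverse limit of the $\ol{X}_{d,r,\be}(K^n)$ under the coordinate projections $\pi_n\colon S^dV_\infty^*\to S^d(K^n)^*$ (set $x_j=0$ for $j>n$). Concretely, I would check the following.
\begin{itemize}
\item[(a)] $\pi_n(X)=\ol{X}_{d,r,\be}(K^n)$. The inclusion $\subseteq$ holds because $\pi_n$ maps $X_{d,r,\be}$ into $X_{d,r,\be}(K^n)$, then take closures. The inclusion $\supseteq$ holds because $S^d(K^n)^*$ embeds in $S^dV_\infty^*$ as a section of $\pi_n$, sending $X_{d,r,\be}(K^n)$ into $X$, and hence its closure into the closed set $X$.
\item[(b)] $\phi$ is $\GL$-equivariant, so it commutes with the projection idempotent $\pi_n\in\mathrm{End}$. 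Here one uses that equivariant maps commute with the monoid action of $n\times\infty$ matrices, which is standard for $\GL$-varieties and is in \S\ref{s:bg}.
\end{itemize}
Hence $\pi_n\circ\phi=\phi_n\circ(\id_B\times\pi_n)$, where $\phi_n\colon B\times\bA^{\ulambda}(K^n)\to S^d(K^n)^*$. Because $\pi_n$ and $\phi$ are surjective onto the relevant targets, $\phi_n$ maps $B\times\bA^{\ulambda}(K^n)$ onto $\ol{X}_{d,r,\be}(K^n)$. Finally, $\bA^{\ulambda}(K^n)$ is a finite-dimensional affine space $\bA^m$, which gives the surjection $B\times\bA^m\to\ol{X}_{d,r,\be}(V)$.

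The main obstacle is step (b): making precise that an equivariant morphism of $\GL$-varieties is compatible with the truncation maps $\pi_n$, so that $\phi$ restricts to a morphism of the finite-level pieces. I would handle this through the polynomial-functor viewpoint. $\bA^{\ulambda}$ and $S^d$ are polynomial functors, and $\GL$-equivariant maps between them extend to natural transformations, hence to maps commuting with all linear maps $K^\infty\to K^n\to K^\infty$. Surjectivity onto the finite level then follows from (a).
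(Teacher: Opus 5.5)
Your proposal is correct and follows essentially the paper's route: apply Theorem~\ref{thm:stronguni} to the infinite-dimensional variety $X=\lim_{\leftarrow n}\ol{X}_{d,r,\be}(K^n)$, then specialize to $V\cong K^n$. The specialization uses the fact that equivariant maps commute with the projection/section pair $X\to X\{K^n\}\to X$. Your steps (a) and (b) are exactly the two commutative diagrams in the paper's Proposition~\ref{prop:functorial}.

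The one superfluous point is your hedge about $B$ possibly not being affine. Theorem~\ref{thm:stronguni} already provides an irreducible affine $B$. Moreover, replacing $B$ by affine opens would generally cost either irreducibility or surjectivity.
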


We stress that $B$ is irreducible in this theorem; if this requirement is
dropped, then the result is a consequence of the unirationality theorem
in \cite{polygeom}, along with the fact that $\GL$-varieties are Noetherian \cite{draisma}. 
The theorem above implies that some aspects of the
geometry of $\ol{X}_{d,r,\be}(V)$ stabilize with $V$. In fact, our main
results imply an even more precise form of Theorem~\ref{thm:brank2}; e.g.,
the affine space $\bA^m$ and the surjections from $B \times \bA^m$ vary
in a manner compatible with $V$. For instance, $\GL(V)$ acts trivially
on $B$ and linearly on $\bA^m$, and the surjection $B \times \bA^m \to
\ol{X}_{d,r,\be}(V)$ is $\GL(V)$-equivariant.

Here is a corollary of the more precise theorem:

\begin{corollary}
Given $d$ and $r$ and $\be$, there exists an integer $D=D(d,r,\be)$ such that for any finite
dimensional vector space $V$, any two points of $\ol{X}_{d,r,\be}(V)$ can be joined by an
irreducible curve in $\ol{X}_{d,r,\be}(V)$ of degree $\le D$.
\end{corollary}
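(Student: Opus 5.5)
The plan is to deduce the corollary from the surjection $\phi\colon B\times\bA^m\to\ol{X}_{d,r,\be}(V)$ of Theorem~\ref{thm:brank2}, in its precise form: $B$ is fixed, $\GL(V)$ acts linearly on $\bA^m=\bA^{\ulambda}(V)$, and $\phi$ is $\GL(V)$-equivariant and comes from one map of $\GL$-varieties. The key point is that this uniformity bounds the degree of $\phi$. Concretely, $\phi$ is given by polynomials whose degree in the $\bA^m$ coordinates is bounded independently of $V$. Each coordinate of $\phi$ is a $\GL(V)$-equivariant polynomial map from $\bA^{\ulambda}(V)$ to $S^dV^*$ with coefficients in $K[B]$, and it is specialized from a single map of $\GL$-varieties. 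Such maps are built from finitely many multilinear "contraction" operations of bounded degree, so I take $N$ to be the maximal degree in the affine-space variables. This $N$ depends only on $(d,r,\be)$.

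Next I fix, once and for all, a bound for joining points of $B$. Since $B$ is an irreducible affine variety, any two points $b_1,b_2\in B$ lie on an irreducible curve $C\subseteq B$. To control degrees uniformly, I embed $B\subseteq\bA^n$ and use the following standard fact: any two points of an irreducible variety of dimension $k$ lie on an irreducible curve cut out by intersecting $B$ with a suitable general linear subspace through both points. After normalization, such a curve is the image of a smooth curve, and its degree is at most $\deg \ol{B}$. To keep the later composition clean, I prefer to parametrize. I choose an irreducible curve $\tilde C$ together with a map $\gamma=(\gamma_B,\gamma_A)\colon\tilde C\to B\times\bA^m$ whose image passes through chosen preimages $(b_1,a_1)$ and $(b_2,a_2)$ of the two given points $x_1,x_2$. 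Such preimages exist by surjectivity.

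The $\bA^m$ part is handled by the line: I set $\gamma_A$ to be the linear interpolation between $a_1$ and $a_2$ in the parameter of a degree-$\le\deg\ol B$ rational map. Simpler still is to take $\tilde C=C\times$ (graph data), but I expect to handle it as follows. Take the curve $C'\subseteq B\times\bA^m$ equal to the image of $C$ under $c\mapsto(c,\ell(c))$, where $\ell$ is a linear function on $\bA^n$ taking $b_i$ to suitable scalars, composed with the affine line through $a_1,a_2$. This is a curve of degree $\le\deg\ol B$ in $\bA^n\times\bA^m$. Its image $\phi(C')$ is then an irreducible curve (or a point) through $x_1,x_2$ of degree $\le N'\cdot\deg\ol B$, where $N'$ bounds the total degree of $\phi$. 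This uses that the degree of the image of a curve under a polynomial map of degree $\le N'$ is at most $N'$ times the degree of the curve. If the image is a point, then $x_1=x_2$ and any line works. This gives $D=N'\deg\ol B$.

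The main obstacle is the uniform degree bound $N'$: one must check that the precise theorem provides a single $\GL$-map specializing to each $V$, so that the polynomial degrees do not grow with $\dim V$. I would extract this from the construction of $\phi$ as a morphism of $\GL$-varieties $B\times\bA^{\ulambda}\to X$. Its comorphism sends the finitely many $\GL$-generators of $K[X]$ to polynomials of some fixed degree, and specialization to $V$ preserves these degrees.
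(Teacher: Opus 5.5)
Your argument is correct and follows the route the paper has in mind for this corollary, which it states without proof. The single $\GL$-equivariant surjection $B\times\bA^{\umu}\to X$ specializes to the surjections $\phi\{V\}$, whose total degree is bounded independently of $V$ because the comorphism is given by finitely many elements of $K[B]$ times finitely many equivariant maps; one then pushes forward a bounded-degree irreducible curve through preimages, as in the proof of Theorem~\ref{thm:Curves}, with your graph-of-$\ell$ trick making the bound independent of $m$.

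Two small repairs are needed. First, a linear section of $B$ through $b_1,b_2$ need not itself be irreducible: this fails when the line $\overline{b_1b_2}$ lies in $B$, and also when the pencil of plane sections of a surface through that line is composite. So you should take an irreducible component through both points, which a stepwise Bertini argument provides with degree $\le\deg\ol B$. Second, when $b_1=b_2$ you should use $\{b_1\}\times\overline{a_1a_2}$ in place of the graph of $\ell$.
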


The theory that we develop below can be applied in a much broader context.
For instance, all of the results above hold, \textit{mutatis mutandis},
also for partition rank of tensors.

\subsection{$\GL$-varieties} \label{ssec:GLvar}

To state our main results, we recall the set-up from \cite{polygeom};
more background is given in \S \ref{s:bg}. Let $\GL=\bigcup_{n \ge
1} \GL_n(K)$ and let $\bV=\bigcup_{n \ge 1} K^n$. Let $\bV_{\lambda}$
be the irreducible polynomial representation of $\GL$ corresponding to
the partition $\lambda$, i.e., $\bV_{\lambda}=\bS_{\lambda}(\bV)$, where
$\bS_{\lambda}$ is the Schur functor. Let $\bA^{\lambda}$ be the affine
scheme $\Spec(\Sym(\bV_{\lambda}))$. For example, if $\lambda=(d)$ then
$\bA^{\lambda}$ is the space of degree-$d$ forms on $\bV$, a typical
$K$-point of which has the form $\sum_{i=(i_1 \leq \cdots \leq i_d)}
c_i x_{i_1} \cdots x_{i_d}$, where the $c_i$ are in $K$ (and may all
be nonzero).  For a tuple of partitions $\ulambda=[\lambda_1, \ldots,
\lambda_r]$, we let $\bA^{\ulambda}=\bA^{\lambda_1} \times \cdots \times
\bA^{\lambda_r}$.

An affine \defn{$\GL$-variety} is a closed $\GL$-stable reduced subscheme
of $\bA^{\ulambda}$. The $\bA^{\ulambda}$ themselves are the basic
examples of $\GL$-varieties; they play the role in our theory that
affine spaces play for finite-dimensional affine varieties. Any finite
dimensional variety is also a $\GL$-variety, with trivial $\GL$-action:
it lives in a space of the form $\bA^{[\emptyset, \ldots, \emptyset]}$, a product of
copies of $\bA^{\emptyset}=\bA^1$ with trivial $\GL$-action.
And above
we have already encountered a more interesting $\GL$-variety, namely,
the border strength $\leq r$ locus in $\bA^{(d)}$: this is the inverse
limit $\lim_{\leftarrow n} \ol{X}_{d,r}(K^n)$ along the maps $S^d (K^{n+1})^*
\to S^d (K^n)^*$ that sends the last variable in a degree-$d$ form in
$n+1$ variables to zero.

\subsection{Main results} \label{ss:main}

We can now state our main theorems. In what follows, we fix an irreducible $\GL$-variety $X$. 

\begin{thma}[Strong unirationality theorem] \label{thm:stronguni}
There is a surjective map of $\GL$-varieties $\phi \colon B \times \bA^{\umu} \to X$ for some irreducible affine variety $B$ and some tuple $\umu$.
\end{thma}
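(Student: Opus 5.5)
We argue by Noetherian induction on $X$, using the (weak) unirationality theorem of \cite{polygeom} and the fact that $\GL$-varieties are Noetherian \cite{draisma}. The key tool is the shift-and-localize structure theory from \cite{polygeom,imgclosure}. Given irreducible $X$, there is a shift $\mathrm{Sh}_n$, a nonzero invariant function $h$ on $\mathrm{Sh}_n(X)$, and an isomorphism $\mathrm{Sh}_n(X)[1/h] \cong B_0 \times \bA^{\umu_0}$ with $B_0$ an irreducible finite-dimensional variety. The map $\mathrm{Sh}_n(X) \to X$ induced by the inclusion $\bV \cong K^n \oplus \bV$ is $\GL$-equivariant. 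Composing gives a $\GL$-map $\phi_0 \colon B_0' \times \bA^{\umu_0} \to X$, where $B_0' = B_0$ is affine after localizing. We first check that its image contains the $\GL$-orbit of an open dense subset. Then the complement $Z = X \setminus \GL\cdot\im(\phi_0)$, or more precisely a closed proper $\GL$-stable subset containing the points not yet hit, has strictly smaller components. By Noetherian induction, each component $Z_j$ admits a surjection $B_j \times \bA^{\umu_j} \to Z_j$ with $B_j$ irreducible.

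The main difficulty is that the naive union $\bigsqcup_j B_j \times \bA^{\umu_j}$ has a reducible base, while we need $B$ irreducible. We plan to glue the pieces into a single irreducible family by a degeneration trick. Set $B = B_0 \times \prod_j B_j \times \bA^N$ and $\umu = \umu_0 \sqcup \bigsqcup_j \umu_j$, adding extra coordinates as needed. We then seek a map that restricts to $\phi_0$ on a generic stratum and to the $\phi_j$ on special strata.

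To build it, we use the fact that $X$ is irreducible and $Z_j \subset X$. For each $j$, we choose a curve-type family, namely a $\GL$-map $\psi_j \colon C_j \times B_j \times \bA^{\umu} \to X$ with $C_j$ an irreducible curve (or $\bA^1$) and the following two properties:
\begin{itemize}
\item on the fibre $t = 0$, $\psi_j$ recovers $\phi_j$ composed with the inclusion $Z_j \subset X$;
\item for generic $t$, $\psi_j$ lands in the dense image of $\phi_0$.
\end{itemize}
Concretely, we would realize $\psi_j$ as $\phi_0$-type maps after reparametrization. The natural candidate is a linear interpolation inside an ambient $\bA^{\ulambda} \supset X$, namely $t \cdot \phi_0(\cdots) + (1-t)\phi_j(\cdots)$. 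However, this need not stay in $X$, so we must instead use the structure of $X$ itself. Our first attempt will exploit that points of $X$ are limits of points of the open stratum through $\GL$-equivariant paths over a finite-dimensional base. In the $\GL$-setting this should follow from the image-closure results of \cite{imgclosure}: a point in the closure of a $\GL$-stable constructible image is reached by a map from $\mathrm{Spec}\, K\lbb t\rbb$, and after spreading out, by an irreducible curve times affine space. Once all the $\psi_j$ exist, taking $B$ to be the product of all the parameter spaces (curves and $B_j$) gives a single map with irreducible base. This map combines the data, for instance via a selector factor, or more cleanly by composing with a $\GL$-map $X \times X \to X$, though that is unavailable in general.

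We expect the hardest step to be this gluing: producing a surjection with irreducible $B$ without any additive structure on $X$. If the direct combination fails, the fallback is to enlarge $\umu$ so that the extra affine coordinates themselves serve as the degeneration parameter. Since $\bA^{\umu}$ is irreducible and contractible to $0$ by scaling, we would try to arrange that $\phi(b, \lambda v) \to \phi_j$-points as $\lambda \to 0$ for suitable $b$. The surjectivity onto $Z_j$ would then come from the fibre over $0 \in \bA^{\umu'}$, and the Noetherian induction terminates because $\GL$-varieties are Noetherian.
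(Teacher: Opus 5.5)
There is a genuine gap, and it sits exactly where you expect it: nothing in your plan produces a single map with irreducible source whose image covers both $\im(\phi_0)$ and the strata $Z_j$. Separate surjections $B_j\times\bA^{\umu_j}\to Z_j$ cannot be combined through a product source: an irreducible base admits no ``selector'', and there is no $\GL$-map $X\times X\to X$. The degenerating families $\psi_j\colon C_j\times B_j\times\bA^{\umu}\to X$ are never constructed.

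The source you suggest for them also does not work. Spreading out the bounded paths of \cite{imgclosure} does not give ``a curve times an affine space''. The Laurent polynomials $y(t)=\sum_{i=-n}^m v_i t^i$ along which the limit exists form the closed subvariety $\fL_{n,m}\subset\bA^{s\cdot\umu}$. It is cut out by the vanishing of the polar part; in the $fg-h^2$ example these equations are $f_{-1}g_{-1}=h_{-1}^2$ and $f_{-1}g_0+f_0g_{-1}=2h_{-1}h_0$. This variety is in general reducible, and the paper has to pass to a component. The scaling fallback gives nothing new either. With $b$ fixed (or converging), $\phi(b,\lambda v)\to\phi(b,0)$, which is a $\GL$-fixed point. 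Reaching other boundary points requires poles in $t$, which leads straight back to $\fL$.

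The paper avoids gluing altogether and instead enlarges one map at a time. It starts from $\phi$ and a point $x$ outside $\int(\im\phi)$, taken to be a generic point of a component; this is why the base-change argument of Theorem~\ref{thm:extend2} is needed.
\begin{enumerate}
\item It first arranges $B$ smooth, with $x$ a limit along a $K\lbb t\rbb$-point $\gamma$ of $B$ (Proposition~\ref{prop:mapping}).
\item It then chooses a family of curves $C\to S$ realizing the jet of $\gamma$ at $P$, with $C\setminus P\to B$ surjective (Proposition~\ref{prop:jets}).
\item It sets $Y=C\times\fL'\times\bA^{\umu}$ and $\psi(c,y,u)=\phi(\rho(c),y(t)+t^{m+1}u)$. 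At points $c\notin P$ the term $t^{m+1}u$ recovers all of $\im\phi$, while the fibre over $P$ hits $x$.
\item Maximality of $\int(\im\phi)$, together with Noetherianity, then forces surjectivity.
\end{enumerate}

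The price is that $Y$ is not of the form $B\times\bA^{\umu}$ and does not lie in $X$. So Noetherian induction on closed subsets of $X$ gives no leverage; one needs the theorem for $Y$ itself. This is why the paper inducts on embedding degree, using $\ed(Y)\le\Vert\umu\Vert=\ud(X)$. For the iteration to stay in range, it proves the sharper statement $\Vert\umu\Vert=\ud(X)$. The bound only gives a drop when $\ud(X)<\ed(X)$. The case $\ud(X)=\ed(X)$ is handled separately by Theorem~\ref{thm:small}, which splits off the top-degree part, where equivariant maps are linear. Neither the $\fL$ construction nor this change of induction parameter appears in your plan, and without them the argument does not close.
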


This theorem allows us to prove several new geometric results about $\GL$-varieties. The first, simple consequence concerns curves (see \S \ref{ss:not} for our definition of curve).

\begin{thma}[Existence of curves] \label{thm:Curves}
Let $x$ and $y$ be $K$-points of $X$. Then there exists an irreducible curve $C$ over $K$ and a map
$j \colon C \to X$ of $K$-schemes such that $x,y \in \im(j)$. 
\end{thma}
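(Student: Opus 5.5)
We will deduce Theorem~\ref{thm:Curves} directly from the strong unirationality theorem (Theorem~\ref{thm:stronguni}). Fix a surjective map of $\GL$-varieties $\phi \colon B \times \bA^{\umu} \to X$ with $B$ an irreducible affine variety of finite type over $K$. The first step is to lift $x$ and $y$. Surjectivity is meant on $K$-points, or at least on underlying topological spaces. In the latter case a $K$-point of $X$ has a preimage point, but we may need a $K$-point in its fibre. We will argue that the fibre over $x$ is a nonempty $\GL$-stable closed subset, hence a $\GL$-variety, and that nonempty $\GL$-varieties over algebraically closed $K$ have $K$-points. The latter holds, for instance, via the Nullstellensatz for $\GL$-algebras from \cite{polygeom}. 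So we obtain $K$-points $\tilde x=(b_1,v_1)$ and $\tilde y=(b_2,v_2)$ of $B\times\bA^{\umu}$ with $\phi(\tilde x)=x$ and $\phi(\tilde y)=y$.

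Next we join $\tilde x$ and $\tilde y$ by an irreducible curve in $B\times\bA^{\umu}$. In the $B$-factor, $B$ is an irreducible finite-dimensional variety over an algebraically closed field. By the classical fact (e.g.\ cutting by general hyperplane sections through two points, or Mumford's lemma), there is an irreducible curve $C_0$ together with a map $C_0\to B$ whose image contains $b_1$ and $b_2$. We may take $C_0$ smooth by normalizing, and pick points $c_1,c_2\in C_0$ mapping to $b_1,b_2$. In the $\bA^{\umu}$-factor, affine space is a $K$-vector space (the $K$-points of $\bA^{\umu}$ are the elements of the dual-type product $\prod \bV_{\mu_i}^*$, or similar). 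We choose a regular function $t\colon C_0\to \bA^1$ with $t(c_1)=0$ and $t(c_2)=1$, which exists since $C_0$ can be taken affine. Define $\gamma(c)=(1-t(c))v_1+t(c)v_2$. This is a morphism $C_0\to\bA^{\umu}$ of $K$-schemes: on coordinate rings it sends each coordinate function in $\Sym(\bV_{\mu_i})$ to an affine-linear combination of $1$ and $t$. Setting $C=C_0$ and $j=\phi\circ(\iota,\gamma)$, where $\iota\colon C_0\to B$, gives a map of $K$-schemes whose image contains $\phi(\tilde x)=x$ and $\phi(\tilde y)=y$.

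The main obstacle is matching the paper's definition of ``curve'' in \S\ref{ss:not} and making sure the map $C\to B\times\bA^{\umu}$ genuinely is a morphism of $K$-schemes into an infinite-dimensional scheme. For the morphism, a map $\Spec R\to\Spec(\Sym W)$ is just a linear map $W\to R$. We give it by $w\mapsto (1-t)\,v_1(w)+t\,v_2(w)$, so this step is fine. The lifting of $K$-points through $\phi$ is the other point needing care. If Theorem~\ref{thm:stronguni} is stated as surjectivity on $K$-points, it is immediate. Otherwise we invoke the $K$-point existence result for nonempty $\GL$-varieties, noting that the fibre is cut out in $B\times\bA^{\umu}$ by $\GL$-stable equations.
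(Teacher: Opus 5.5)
Your overall route is the paper's: take the surjection $\phi\colon B\times\bA^{\umu}\to X$ from Theorem~\ref{thm:uni-improved}, lift $x$ and $y$ to $K$-points $\tilde x,\tilde y$, join the lifts by an irreducible curve in $B\times\bA^{\umu}$, and compose with $\phi$. Your explicit curve is a fine replacement for the paper's citation of \cite[Lemma 5.2]{imgclosure}. It consists of a curve in $B$ through $b_1,b_2$, together with the interpolation $(1-t)v_1+tv_2$ in the $\bA^{\umu}$-factor. Only make sure that $c_1\neq c_2$ when $b_1=b_2$, for instance by using a constant map $\bA^1\to B$. Otherwise no $t$ with $t(c_1)=0$ and $t(c_2)=1$ exists.

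The gap is in the lifting step, which you rightly call delicate but justify incorrectly. The fibre $\phi^{-1}(x)$ is not $\GL$-stable. Indeed $g\cdot\phi^{-1}(x)=\phi^{-1}(gx)$, so the fibre is stable only under the stabilizer of $x$, and it is a $\GL$-variety only when $x$ is $\GL$-fixed. Likewise, the equations $\phi^*f-f(x)$ for $f\in K[X]$ do not generate a $\GL$-stable ideal. So the fact that nonempty $\GL$-varieties have $K$-points does not apply. A general nonempty closed subscheme of $B\times\bA^{\umu}$ can fail to have $K$-points: for countable $K$, the ring $K[x_1,x_2,\ldots]$ surjects onto $K(s)$.

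Nor can you fall back on reading Theorem~\ref{thm:stronguni} as surjectivity on $K$-points. Step~1 of the proof of Theorem~\ref{thm:uni-improved} works with generic points, so it only gives surjectivity on scheme-theoretic points. What you need is that a surjective morphism of $\GL$-varieties is surjective on $K$-points. The paper uses this without comment here (``let $\tilde x$ and $\tilde y$ be inverse images'') and also in the proof of Corollary~\ref{cor:boundeddenominator}.

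When $K$ is uncountable there is a short correct argument. Let $R=K[B\times\bA^{\umu}]\otimes_{K[X]}K$ be the fibre ring. It is nonzero, because $x$ lies in the image and has residue field $K$. It has countable $K$-dimension. Hence for any maximal ideal $\mathfrak m$, the field $R/\mathfrak m$ is algebraic over $K$, and therefore equals $K$. (If some $s\in R/\mathfrak m$ were transcendental, the elements $1/(s-a)$ for $a\in K$ would give uncountably many linearly independent elements.) For countable $K$ this argument is unavailable. You must then invoke $K$-point surjectivity of surjective $\GL$-morphisms as a separate input, which is the fact the paper takes for granted. It does not follow from the $\GL$-stable case in the way you propose.
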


The second consequence concerns mapping spaces. Let $\umu$ be a pure tuple, meaning it does not contain the empty partition $\emptyset$, and let $\ulambda$ be any tuple. By basic representation theory, the $\GL$-equivariant morphisms $\bA^{\umu} \to \bA^{\ulambda}$ form (the $K$-points of) a finite dimensional affine space. Consequently, for a closed $\GL$-subvariety $X$ of $\bA^{\ulambda}$, the $\GL$-equivariant morphisms $\bA^{\umu} \to X$ form a closed subvariety of that affine space. This \defn{mapping space} is denoted $\cM_{\umu}(X)$, and can be characterized intrinsically via its functor of points \cite[\S 2.6]{polygeom}.

\begin{thma}[Irreducibility of mapping spaces] \label{thm:MappingSpace}
The mapping space $\cM_{\umu}(X)$ is irreducible.
\end{thma}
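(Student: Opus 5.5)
We derive Theorem~\ref{thm:MappingSpace} from Theorem~\ref{thm:stronguni}. By that theorem, fix a surjective map of $\GL$-varieties $\phi \colon B \times \bA^{\unu} \to X$ with $B$ irreducible. Composition with $\phi$ gives a morphism of finite-dimensional varieties
\[
\Phi \colon \cM_{\umu}(B \times \bA^{\unu}) \to \cM_{\umu}(X), \qquad \psi \mapsto \phi \circ \psi .
\]
The source decomposes as $\cM_{\umu}(B) \times \cM_{\umu}(\bA^{\unu})$, since maps into a product are pairs of maps. Because $\umu$ is pure, a $\GL$-equivariant map $\bA^{\umu} \to B$ into a space with trivial action is constant. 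Indeed, $\bA^{\umu}$ has $0$ in the closure of every $\GL$-orbit (or, equivalently, the invariants of $\Sym(\bV_{\mu})$ for pure $\mu$ are just $K$). Hence $\cM_{\umu}(B) \cong B$. Also $\cM_{\umu}(\bA^{\unu})$ is a finite-dimensional affine space. So the source of $\Phi$ is irreducible, and it suffices to prove that $\Phi$ is surjective on $K$-points.

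\emph{Lifting step.} Given an equivariant $\alpha \colon \bA^{\umu} \to X$, we need an equivariant $\psi \colon \bA^{\umu} \to B \times \bA^{\unu}$ with $\phi\circ\psi = \alpha$. First evaluate at the origin: $\alpha(0)$ is a $\GL$-fixed point of $X$. By surjectivity, choose $(b,v)$ with $\phi(b,v)=\alpha(0)$. Then I would like to replace $v$ by a fixed point, using that $\phi(b,\cdot)$ is equivariant and the orbit closure of $v$ contains $0$. The fiber over the fixed point $\alpha(0)$ is closed and $\GL$-stable, so $\phi(b,0)=\alpha(0)$. Thus constant maps lift.

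For general $\alpha$ this pointwise argument is insufficient, and I expect it to be the main obstacle. The natural trick is to use the functor-of-points description of mapping spaces \cite[\S 2.6]{polygeom}: an equivariant map $\bA^{\umu} \to X$ corresponds to a point of $X$ with coordinates in a suitable "generic" ring, or, via a shift, to a point of the shifted variety. Concretely, I would apply Theorem~\ref{thm:stronguni} not to $X$ but to the $\GL$-variety $X' = \{(a,x)\}$, or better to $\cM_{\umu}(X) \times \bA^{\umu} \to X$ viewed as a family. Alternatively, I would apply strong unirationality over a base: $X_{\cM}$, the universal family over $\cM_{\umu}(X)$. The cleanest route I would try is to show that Theorem~\ref{thm:stronguni} holds "universally", so that surjectivity persists after base change to points over $K[\bA^{\umu}]$-type coordinates. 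Equivalently, the surjection is built from shifts and localizations, so equivariant maps from $\bA^{\umu}$ lift. This likely requires re-entering the proof of the main theorem.

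A fallback that avoids lifting: show directly that $\cM_{\umu}(X)$ is the image of a map from an irreducible variety. Apply Theorem~\ref{thm:stronguni} to the irreducible $\GL$-variety $Y = $ closure of the image of the evaluation map $\cM^{\circ} \times \bA^{\umu} \to \cM^{\circ}\times X$ for each component $\cM^{\circ}$. Then argue that every component contains the distinguished component through constant maps, contradicting the existence of several components. Irreducibility then follows from the surjection $B \times \cM_{\umu}(\bA^{\unu}) \to \cM_{\umu}(X)$.
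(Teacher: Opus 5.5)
Your reduction matches the paper's. With $\phi\colon B\times\bA^{\unu}\to X$ surjective, the source of $\phi_*\colon\cM_{\umu}(B\times\bA^{\unu})\to\cM_{\umu}(X)$ is $B\times\cM_{\umu}(\bA^{\unu})$, and your argument that $\cM_{\umu}(B)=B$ for pure $\umu$ is correct. So irreducibility follows once $\phi_*$ is surjective, and your treatment of constant maps is fine.

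But the substance of the theorem is lifting a \emph{non-constant} equivariant map $\alpha\colon\bA^{\umu}\to X$ through $\phi$, and you leave this open. A ``universal'' version of Theorem~\ref{thm:stronguni}, or re-entering its proof, is not an argument. Pointwise surjectivity of $\phi$ gives no way to choose preimages of the $\alpha(a)$ equivariantly and algebraically in $a$. The fallback cannot succeed as stated either. Precompose any $\alpha\in\cM_{\umu}(X)$ with scalar multiplication by $t$ on $\bA^{\umu}$ and let $t\to 0$; since $\bG_m$ preserves each irreducible component, this shows that every component of $\cM_{\umu}(X)$ already contains constant maps. So nothing about the constant-map locus can rule out several components.

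The missing ingredient is a generic lifting property for families of maps out of $\bA^{\umu}$, which the paper imports from \cite[Proposition~6.4]{polygeom}. Since $\phi$ is surjective, for every irreducible variety $C$ and morphism $\psi\colon C\times\bA^{\umu}\to X$ there exist:
\begin{itemize}
\item an irreducible variety $D$,
\item a dominant map $q\colon D\to C$,
\item a morphism $\tilde\psi\colon D\times\bA^{\umu}\to B\times\bA^{\unu}$ with $\phi\circ\tilde\psi=\psi\circ(q\times\id)$.
\end{itemize}
Lifting is only available after a dominant base change, which is why the paper argues with scheme-theoretic points. For $x\in\cM_{\umu}(X)$, take $C=\ol{\{x\}}$ with its tautological map $\psi$. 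Then $\tilde\psi$ gives a map $D\to\cM_{\umu}(B\times\bA^{\unu})$ compatible with $q$, so $\phi_*$ sends the generic point of $D$ to $x$.

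Alternatively, taking $C$ to be the single $K$-point $\alpha$ and evaluating $\tilde\psi$ at any $K$-point of $D$ would complete your $K$-point version. Without such a lifting statement, your argument stops at the constant maps.
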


The final consequence concerns the orbit space of $X$. We say that two points belong to the same \defn{generalized orbit} if each belongs to the Zariski closure of the $\GL$-orbit of the other. We define the \defn{orbit space} of $X$, denoted $X^{\orb}$, to be the space of generalized orbits, equipped with the quotient topology. This is an important space to understand if one cares about $X$.

In \cite{polygeom}, we initiated an approach to describing $X^{\orb}$. First, we defined a notion of \defn{type} for a point in $X$, which is a pure tuple that is invariant on generalized orbits; see \S \ref{ss:type} for the definition. We then define $X^{\orb}_{\ulambda}$ to be the subspace of $X^{\orb}$ consisting of points whose type can be obtained from $\ulambda$ by deleting some partitions; these subspaces filter $X^{\orb}$. Second, we define a space $X^{\type}_{\ulambda}$ that we called the \defn{type space} of $X$. The key property of this space is that it is defined in terms of finite dimensional algebraic varieties. Finally, we defined a canonical map
\begin{displaymath}
\rho_{\ulambda} \colon X_{\ulambda}^{\type} \to X_{\ulambda}^{\orb},
\end{displaymath}
and showed that it is a continuous bijection \cite[Theorem~1.3]{polygeom}. Thus, at this point, we have a finitary description of $X^{\orb}_{\ulambda}$ as a set, but not as a space. We expected that $\rho_{\ulambda}$ should be a homeomorphism, but this was out of reach with previous methods. We are now able to show this, completing the finitary description of $X^{\orb}_{\ulambda}$.

\begin{thma}[Topology of generalized orbits] \label{thm:GeneralizedOrbit}
The map $\rho_{\ulambda}$ is a homeomorphism.
\end{thma}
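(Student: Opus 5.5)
We need to show that $\rho_{\ulambda}$ is a homeomorphism. Since \cite[Theorem~1.3]{polygeom} already gives that it is a continuous bijection, it suffices to show it is closed (equivalently, that $\rho_{\ulambda}^{-1}$ is continuous). Closed sets in $X^{\orb}_{\ulambda}$ correspond to $\GL$-stable closed subsets of $X$ intersected with the locus of points of type $\le\ulambda$. So the plan is to take a closed subset $Z$ of $X^{\type}_{\ulambda}$ and produce a $\GL$-stable closed subset $Y\subseteq X$ whose generalized orbits of type $\le\ulambda$ are exactly $\rho_{\ulambda}(Z)$. Since $Z$ has finitely many irreducible components (the type space is built from finite-dimensional varieties) and finite unions are harmless, we may assume $Z$ is irreducible, lying in a single piece indexed by some $\umu\le\ulambda$.

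The natural candidate for $Y$ comes from the mapping space description. A point of the type space of type $\umu$ is, up to the relevant identification, the image-closure data of an equivariant map $\bA^{\umu}\to X$, i.e.\ a point of (a quotient of) $\cM_{\umu}(X)$. Let $\tilde Z\subseteq\cM_{\umu}(X)$ be the (irreducible) preimage of $Z$ and consider the evaluation map $\tilde Z\times\bA^{\umu}\to X$. Define $Y$ to be the closure of its image. This is an irreducible $\GL$-variety, hence by Theorem~\ref{thm:stronguni} it admits a surjection $B\times\bA^{\unu}\to Y$. The containment $\rho_{\ulambda}(Z)\subseteq Y^{\orb}$ is clear.

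The key step, and the main obstacle, is the reverse containment: every point $y\in Y$ of type $\le\ulambda$ lies in $\rho_{\ulambda}(Z)$. Here I would use surjectivity essentially. Writing $y=\phi(b,a)$, the restriction of $\phi$ to $\{b\}\times\bA^{\unu}$ gives an equivariant map whose image closure contains the orbit closure of $y$. Comparing types, together with the fact that generalized orbits of $y$ are detected by the minimal $\umu'$ with $y$ in the image closure of some map $\bA^{\umu'}\to X$, I would degenerate the map at $b$ (via a one-parameter family obtained using Theorem~\ref{thm:Curves}, connecting $b$ to a generic point of $B$) to a map $\bA^{\umu'}\to Y$ with $\umu'\le\ulambda$. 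Then I would use the irreducibility of $\cM_{\umu'}(Y)$ (Theorem~\ref{thm:MappingSpace}) to show that its points lie in the closure of maps coming from $\tilde Z$, whence the type datum of $y$ lies in the closure of $Z$, which is $Z$. Making this limit argument precise, so that type data of limits are limits of type data in $X^{\type}_{\ulambda}$, is the delicate part. I expect it to require the precise definition of the topology on the type space from \S\ref{ss:type}, and I would first try to reduce to the case where $Z$ is the closure of a single irreducible locally closed stratum.
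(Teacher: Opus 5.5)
Your reduction is fine: it suffices to show that $\rho_{\ulambda}(Z)$ is closed for $Z$ an irreducible closed subset of $X^{\type}_{\ulambda}$, and your $Y$ is the right candidate. If $z$ is the generic point of $Z$, then $\tilde Z=\ol{\{z\}}\subseteq\cM_{\ulambda}(X)$ and $Y=\ol{\rho_{\ulambda}(z)}$. (Note that $X^{\type}_{\ulambda}$ is a single subspace of $\cM_{\ulambda}(X)$; smaller types appear as special $\Gamma_{\ulambda}$-fixed points, not as separate pieces indexed by $\umu\le\ulambda$.)

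The gap is the reverse containment, which is the whole content of the theorem, and you do not prove it. Irreducibility of a mapping space of $Y$ does not by itself put its points in the closure of $\tilde Z$. A priori $\tilde Z$ could be a proper closed subset of $\cM_{\ulambda}(Y)$; what you need is that $\tilde Z$ contains the generic point of $\cM_{\ulambda}(Y)$, and nothing in your sketch gives this. The proposed degeneration (a surjection $B\times\bA^{\unu}\to Y$, curves through $b$, a limit map $\bA^{\umu'}\to Y$) is also not well-posed. $\tilde Z$ lives in $\cM_{\ulambda}$, not $\cM_{\umu'}$, and ``type data of limits are limits of type data'' essentially restates what must be proved.

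The missing idea is short and needs no limits. Apply Theorem~\ref{thm:MappingSpace} to $Y$: $\cM_{\ulambda}(Y)$ is irreducible, and its generic point $g$ is $\Gamma_{\ulambda}$-fixed, so $Y^{\type}_{\ulambda}$ is irreducible with generic point $g$. Now $\rho_{\ulambda}\colon Y^{\type}_{\ulambda}\to Y^{\orb}_{\ulambda}$ is a continuous bijection of irreducible sober spaces. By Lemma~\ref{lem:generic} it sends $g$ to the generic point of $Y^{\orb}_{\ulambda}$, which is also $\rho_{\ulambda}(z)$. Since $z\in Y^{\type}_{\ulambda}$, injectivity forces $g=z$.

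Hence every point of $Y^{\type}_{\ulambda}$ is a specialization of $z$, so it lies in $\tilde Z$. The injective continuous map $Y^{\type}_{\ulambda}\to X^{\type}_{\ulambda}$ is compatible with $\rho_{\ulambda}$, so $\rho_{\ulambda}^{-1}(Y^{\orb}_{\ulambda})\subseteq Z$, i.e.\ $\rho_{\ulambda}(Z)=Y^{\orb}_{\ulambda}$, which is closed. This is exactly the paper's argument, phrased there as $\rho_{\ulambda}^{-1}$ preserving specializations. The surjectivity and curve results enter only through Theorem~\ref{thm:MappingSpace}, not through any degeneration.
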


See \S \ref{ss:type} for additional details and an example.

\begin{remark}
Throughout this paper our coefficient field $K$ is algebraically closed of characteristic zero. The first requirement can be dropped in many places, and occasionally we will make a remark to that effect. The
characteristic zero requirement is more essential: first of all, the structure of $\GL$-varieties in positive characteristic is already much more delicate, and moreover we freely use characteristic-zero techniques such as resolution of singularities. In \cite{poschar}, we studied $\GL$-varieties in positive characteristic, and established an analog of the unirationlity theorem in \cite[Theorem~6.7]{poschar}. It would be interesting if there were an analog to the present paper.
\end{remark}

\subsection{Sketch of proof} \label{ss:sketch}

We make a few comments on the proof of Theorem~\ref{thm:stronguni}. Let $X$ be an irreducible $\GL$-variety, and choose a closed embedding $X \subset \bA^{\ulambda}$. By the unirationality theorem proved in \cite{polygeom}, there is a dominant map $\phi \colon B \times \bA^{\umu} \to X$ with $B$ a finite dimensional irreducible variety. For simplicity, we assume here that $B$ is a point, and thus omit it. The proof has three main steps.

\textit{(a) Enlarging the image.} Suppose there is a $K$-point point $x \in X$ that is not in the image of $\phi$. By the results of \cite{imgclosure}, we can realize $x$ as a limit of a 1-parameter family with bounded denominators. Concretely, this means that there are $K$-points $v_{-n}, \ldots, v_m$ of $\bA^{\umu}$ such that
\begin{displaymath}
x = \lim_{t \to 0} \phi(v_{-n} t^{-n} + \cdots + t^m v_m).
\end{displaymath}
Let $s=n+m+1$, and let $\bA^{s \cdot \umu}$ denote the space $(\bA^{\umu})^s$. For $w \in \bA^{s \cdot \umu}$, let $p_w(t)$ be the Laurent polynomial $w_{-n} t^{-n}+\cdots+t^m w_m$. The limit $\phi(p_w(t)))$ as $t \to 0$ converges if and only if the coefficients of negative powers of $t$ vanish. This is a system of polynomial equations in the $w$'s. It follows that the set of $w$'s with convergent limit form a closed $\GL$-subvariety $Y$ of $\bA^{s \cdot \umu}$.

Essentially definition, $\phi$ induces a map $\psi \colon Y \to X$ by $w\mapsto \lim_{t \to 0} \phi(p_w(t)$). The image of $\psi$ contains the point $x$, since $x=\psi(v)$. It also contains the image of $\phi$, as ones sees by evaluating at constant Laurent polynomials. Thus $\im(\psi)$ contains $\im(\phi) \cup \{x\}$, and in this way, $\psi$ is ``better'' than $\phi$. See \S \ref{s:enlarge} for details. In \S \ref{s:enlarge2}, we prove a variant where $x$ can be any scheme-theoretic point. We note that dealing with the case where $B$ is non-trivial introduces a number of complications.

\textit{(b) The core argument.} Let $\Vert \unu \Vert$ be the largest size of a partition in the tuple $\unu$. Define the \defn{embedding degree} of a $\GL$-variety $Z$, denoted $\ed(Z)$, to be the minimal $d$ such that $Z$ admits a closed embedding into $\bA^{\unu}$ with $\Vert \unu \Vert=d$, and define the \defn{unirationality degree} of $Z$, denoted $\ud(Z)$, to be the minimal $d$ such that there is a dominant map $C \times \bA^{\unu} \to Z$ with $C$ finite dimensional and $\Vert \unu \Vert=d$. The unirationality theorem from \cite{polygeom} shows that $\ud(Z) \le \ed(Z)$.

Let $d=\ud(X)$, and suppose we are in the fortunate situation where we have a strict inequality $\ud(X)<d$; furthermore, suppose we have chosen $\ulambda$ and $\umu$ optimally, i.e., $\Vert \ulambda \Vert=d$ and $\Vert \umu \Vert = \ud(X)$. Assume we know Theorem~\ref{thm:stronguni} holds when the embedding degree is $<d$ (our ultimate argument is an induction on embedding degree). Since $\ed(Y) = \Vert \umu \Vert < d$, we know Theorem~\ref{thm:stronguni} holds\footnote{Our $Y$ is not necessarily irreducible. We actually replace $Y$ with a suitably chosen irreducible component.} for $Y$, and so we can find a surjective map $B' \times \bA^{\unu} \to Y$. Composing with $\psi$, we thus have a dominant map $\phi' \colon B' \times \bA^{\unu} \to X$ whose image contains $\im(\phi) \cup \{x\}$. We have thus constructed an improvement to our original $\phi$. By iterating this argument\footnote{There is one subtlety here: to iterate, we must have $\Vert \unu \Vert \le d$. We thus actually prove a stronger statement than Theorem~\ref{thm:stronguni}, which states that the tuple $\umu$ in Theorem~\ref{thm:stronguni} can be chosen to satisfy $\Vert \umu \Vert = \ud(X)$.}, and using the noetherianity of $\GL$-varieties \cite{draisma}, we eventually reach a surjective map, as in Theorem~\ref{thm:stronguni}. See Theorem~\ref{thm:uni-improved} for details.

\textit{(c) Pulling off the top piece.} The above argument only works when $\ud(X)<\ed(X)$. To complete the proof, we need one additional result (see \S \ref{s:top}). Let $d=\ed(X)$. We show that there is a surjective map $X' \times \bA^{\ul{\tau}} \to X$ where $X'$ is an irreducible $\GL$-variety with $\ud(X')<d$ and $\ed(X') \le d$, and every partition in $\tau$ has size $d$. We can then apply the argument from step (b) to prove Theorem~\ref{thm:stronguni} for $X'$. The theorem for $X$ then follows easily.

Here is the basic reason it is feasible to prove such a result. Recall that we have a dominant map $\phi \colon B \times \bA^{\umu} \to X \subset \bA^{\ulambda}$ with $\Vert \ulambda \Vert=d$ and $\Vert \umu \Vert \le d$. The key point is that if $\alpha$ and $\beta$ are two partitions of $d$ then any map $\bA^{\alpha} \to \bA^{\beta}$ of $\GL$-varieties is linear. This basically means that $\phi$ has a very simple behavior on the degree $d$ piece of $\bA^{\umu}$, which leads to the approximate factorization of $X$ given above.

\subsection{Notation} \label{ss:not}

Important notation:
\begin{description}[align=right,labelwidth=2cm,leftmargin=!]
\item [ $K$ ] the base field (characteristic~0 and algebraically closed)
\item [ $\Omega$ ] an extension of $K$
\item [ $\GL$ ] the infinite general linear group
\item [ $\ulambda$ ] a tuple of partitions $[\lambda_1, \ldots, \lambda_r]$, called \defn{pure} if no $\lambda_i$ is the empty partition
\item [ $\bA^{\ulambda}$ ] the basic affine $\GL$-variety
\item [ {$K[X]$} ] the coordinate ring of the affine scheme $X/K$
\item [ $\Vert \ulambda \Vert$ ] the maximum size of a partition in $\ulambda$ (\S \ref{ss:top})
\item [ $\ed$ ] embedding degree (\S \ref{ss:top})
\item [ $\ud$ ] unirationality degree (\S \ref{ss:top})
\end{description}
A \defn{variety} over a field $\Omega$ is a reduced scheme that is separated and of finite type over $\Omega$. Most varieties in this paper are affine. A \defn{curve} is a one-dimensional variety; we do not require curves to be smooth or irreducible, unless explicitly stated. A \defn{family of curves} over a base scheme $S$ is a morphism $C \to S$ that is flat, separated, and of finite type, and whose fibers are curves. We typically denote ordinary, finite dimensional varieties by $B$ or $C$, and $\GL$-varieties by $X$ or $Y$.

\subsection{Acknowledgements}

We thank Bhargav Bhatt for helpful conversations.

\section{Background on \texorpdfstring{$\GL$}{GL}-varieties} \label{s:bg}

In this section, we recall the basic definitions surrounding $\GL$-varieties, and also prove some simple new results relevant to the current paper. We refer to \cite{polygeom} and \cite{imgclosure} for detailed background material on $\GL$-varieties.

\subsection{$\GL$-varieties}

A \defn{$\GL$-algebra} $R$ is a commutative $K$-algebra equip\-ped with
an action of $\GL$ via automorphisms that turns it into a polynomial
representation of $\GL$, i.e., a subquotient of a (typically infinite)
direct sum of tensor powers of $\bV$. By standard representation
theory, it is then a direct sum of representations of the form
$\bS_{\lambda}(\bV)$. We say that $R$ is \defn{$\GL$-finitely generated}
if it is generated by the $\GL$-orbits of finitely many elements; it
is then a quotient of $\GL$-algebra of the form $\bigotimes_{i=1}^r
\Sym(\bS_{\lambda_i}(\bV))$ for some tuple $\ulambda$. An
\defn{affine $\GL$-variety} is the spectrum of a reduced, $\GL$-finitely
generated $\GL$-algebra. Equivalently, it is a $\GL$-stable closed
and reduced subscheme of some $\bA^{\ulambda}$. A \defn{quasi-affine
$\GL$-variety} is $\GL$-stable open subscheme of some $\GL$-variety.
A \defn{morphism} (or, informally, just map) of $\GL$-varieties is a
$\GL$-equivariant morphism of schemes.

\subsection{$\GL$-varieties as functors}

Let $M$ be a polynomial representation of $\GL$. Then $M$ admits a canonical direct sum decomposition $\bigoplus_{\lambda} M_{\lambda} \otimes \bS_{\lambda}(\bV)$ where the sum is over partitions and $M_{\lambda}$ is a multiplicitiy space. For a vector space $V$, we define $M\{V\}$ to be $\bigoplus_{\lambda} M_{\lambda} \otimes \bS_{\lambda}(V)$. In this way, $V \mapsto M\{V\}$ is a functor of the vector space $V$. We use curly braces here to avoid confusion with functor of points in the geometric context.

Suppose $A$ is a $\GL$-algebra. Then $V \mapsto A\{V\}$ is a functor to the category of $K$-algebras. In particular, the natural inclusion $K^n \to \bV$ gives an inclusion of algebras $A\{K^n\} \to A\{V\}$, while the natural surjection $\bV \to K^n$ yields a surjection of algebras $A \to A\{K^n\}$. If $A$ is $\GL$-finitely generated then each $A\{K^n\}$ is finitely generated.

Dually, an affine $\GL$-variety $X$ gives a contravariant functor from
finite-dimensional vector spaces to varieties; we write $X\{V\}$ for the
evaluation of that functor at $V$. We have morphisms of schemes from $X$
to $X\{K^n\}$ and vice versa, and the former is a left inverse of the
latter. We note that a \emph{quasi-affine} $\GL$-variety does not
yield such a functor.

To derive Theorem~\ref{thm:brank2} from
Theorem~\ref{thm:stronguni} we need the following proposition.

\begin{proposition} \label{prop:functorial}
Let $\phi \colon Y \to X$ be a morphism of affine
$\GL$-varieties, and let $Z=\ol{\im(\phi)}$. Then for any
finite dimensional vector space $V$, the variety $Z\{V\}$ is
the image closure of $\phi\{V\} \colon Y\{V\} \to X\{V\}$.
\end{proposition}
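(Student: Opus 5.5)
Translate into coordinate rings. Write $A=K[X]$, $B=K[Y]$, and $\phi^*\colon A\to B$ for the induced map of $\GL$-algebras. Let $I=\ker(\phi^*)$. Since $Z=\ol{\im(\phi)}$ is the scheme-theoretic image and $X$, $Y$ are affine, we have $Z=\Spec(A/I)$, i.e., $K[Z]=A/I$. For the finite dimensional vector space $V$ we likewise have $K[X\{V\}]=A\{V\}$ and $K[Y\{V\}]=B\{V\}$. The map $\phi\{V\}$ corresponds to $\phi^*\{V\}\colon A\{V\}\to B\{V\}$. The image closure of $\phi\{V\}$ is therefore $\Spec(A\{V\}/\ker(\phi^*\{V\}))$, taken with its reduced structure. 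This reducedness is automatic, because the kernel of a map into the reduced ring $B\{V\}$ is radical. On the other side, $K[Z\{V\}]=(A/I)\{V\}$. So the claim reduces to the algebraic identity
\[\ker(\phi^*\{V\})=I\{V\}\]
as ideals in $A\{V\}$.

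To prove this identity, I will use that $M\mapsto M\{V\}$ is an exact functor from polynomial representations of $\GL$ to vector spaces. Exactness holds because the functor is given on isotypic components by $M_\lambda\otimes \bS_\lambda(\bV)\mapsto M_\lambda\otimes\bS_\lambda(V)$, and polynomial representations are semisimple in characteristic zero. Hence any $\GL$-equivariant linear map $f\colon M\to N$ decomposes as $\bigoplus_\lambda f_\lambda\otimes \id$ with $f_\lambda\colon M_\lambda\to N_\lambda$. The induced map $f\{V\}$ is $\bigoplus_\lambda f_\lambda\otimes\id_{\bS_\lambda(V)}$, so $\ker(f\{V\})=\bigoplus_\lambda \ker(f_\lambda)\otimes \bS_\lambda(V)=(\ker f)\{V\}$. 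Applying this to $f=\phi^*$, which is $\GL$-equivariant, gives the identity. Note that $I$ is $\GL$-stable, so $I\{V\}$ makes sense, and it is the ideal of $A\{V\}$ cutting out $Z\{V\}$ inside $X\{V\}$.

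The one point needing care is compatibility of the algebra structures: the functor $\{V\}$ must be compatible with multiplication, so that $(A/I)\{V\}=A\{V\}/I\{V\}$ as rings, not merely as vector spaces. Here I would use the standard description of $A\{V\}$. For $V=K^n$ it is the subring of $A$ of $\GL_\infty$-weight vectors supported in the first $n$ coordinates, equivalently the image of $A$ under the surjection induced by $\bV\to K^n$ described above. Both descriptions are visibly compatible with quotients by $\GL$-stable ideals, and for general $V$ one reduces to $V=K^n$ by choosing a basis. With this in hand the argument is purely formal, and I do not expect a serious obstacle. The only things to check are that the image closure carries its reduced structure and that exactness of $\{V\}$ holds on isotypic pieces.
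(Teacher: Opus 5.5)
Your proof is correct, but it follows a different route from the paper's. The paper argues geometrically for $V=K^n$. It uses the retraction $X\to X\{K^n\}$ and the section $X\{K^n\}\to X$ (and likewise for $Y$), both of which commute with $\phi$ and $\phi\{K^n\}$. Two diagram chases then give the inclusions $Z\{K^n\}\subseteq \ol{\im(\phi\{K^n\})}$ and $\ol{\im(\phi\{K^n\})}\subseteq Z\{K^n\}$ of closed sets. You instead pass to coordinate rings and prove the ideal identity $\ker(\phi^*\{V\})=(\ker\phi^*)\{V\}$ from exactness of $M\mapsto M\{V\}$, using semisimplicity and Schur's lemma.

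What your route buys is a direct scheme-theoretic equality of ideals, with the reducedness of the image closure explained along the way. The paper's argument is shorter and avoids explicit representation theory. It does, however, tacitly use that $Z\to Z\{K^n\}$ is surjective and that $Z\{K^n\}$ is the preimage of $Z$ under $X\{K^n\}\to X$. Both facts amount to the same compatibility of $\{K^n\}$ with $\GL$-stable ideals that your exactness argument makes explicit.

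Your worry about multiplicativity is also lighter than you suggest. The maps $A\to A/I$ and $\phi^*$ are $\GL$-algebra maps, and $\{V\}$ is already a functor to algebras. So the ring maps $A\{V\}\to (A/I)\{V\}$ and $\phi^*\{V\}$ have as underlying linear maps the functor applied to the linear maps, and exactness alone gives $(A/I)\{V\}\cong A\{V\}/I\{V\}$ as rings. In the weight-space picture for $V=K^n$ it is even more immediate: $\ker(\phi^*\{K^n\})=I\cap A\{K^n\}=I\{K^n\}$, since $I$ is torus-stable and hence the sum of its weight spaces.
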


\begin{proof}
It suffices to check this for the case where $V=K^n$. The 
commutative diagram
\begin{displaymath}
\xymatrix{
X \ar[r]^\phi \ar[d] & Y \ar[d] \\
X\{K^n\} \ar[r]_{\phi\{K^n\}} & Y\{K^n\}
}
\end{displaymath}
implies that the right-most downward arrow maps $Z$ into 
the image closure of $\phi\{K^n\}$, so that $Z\{K^n\}
\subseteq \ol{\im(\phi\{K^n\})}$. Conversely, 
the commutative diagram
\begin{displaymath}
\xymatrix{
X \ar[r]^\phi & Y \\
X\{K^n\} \ar[u] \ar[r]_{\phi\{K^n\}} & Y\{K^n\} \ar[u]
}
\end{displaymath}
shows that the image closure of $\phi\{K^n\}$ is mapped into
$Z$ via the right-most upward arrow, and hence contained in
$Z\{K^n\}$. 
\end{proof}

\begin{proof}[Proof of Theorem~\ref{thm:brank2} from Theorem~\ref{thm:stronguni}]
Set $X:=\lim_{\leftarrow n} \ol{X}_{d,r,\be}(K^n) \subseteq \bA^{(d)}$. By
Theorem~\ref{thm:stronguni} there exist an irreducible affine variety
$B$, a tuple $\ulambda=[\lambda_1,\ldots,\lambda_r]$, and a surjective
morphism $\phi \colon B \times \bA^{\ulambda} \to X$ of $\GL$-varieties. By
Proposition~\ref{prop:functorial}, the morphism $\phi\{V\}$ from the
finite-dimensional variety $(B \times \bA^{\ulambda})\{V\}=B \times
\prod_{i=1}^r \Sym(\bS_{\lambda_i}(V)))$ to $X$ is surjective. The domain
of $\phi\{V\}$ is of the form $B \times \bA^m$, where $m$ depends on $V$
but $B$ does not, as desired.
\end{proof}

We note that the proof gives much more: $\phi\{V\}$ depends in a
functorial manner on $V$ and is, in particular, $\GL(V)$-equivariant.

\subsection{The central grading}

A $\GL$-algebra $R$ has a natural $\bZ_{\geq 0}$-grading, called the \defn{central
grading}, in which $f \in R$ has degree $d$ if $(t \cdot \id_n)f=t^d f$
for all $t \in K^*$ and all $n \gg 0$. In this grading, the elements of
$\bV_\lambda$ have degree $|\lambda|$. If $X$ is an affine $\GL$-variety,
then the central grading of its coordinate ring $K[X]$ gives rise to
an action of the multiplicative group $\bG_m$ on $X$, called the
\defn{central $\bG_m$-action}. Every morphism of affine $\GL$-varieties
is equivariant with respect to the central $\bG_m$-action. We write
$X_0$ for the spectrum of the degree-$0$ part of $K[X]$; this is a
variety isomorphic to $X\{K^0\}$.

We write $K\lpp t \rpp$ for the field of Laurent series over $K$.
The central action on $X$
yields an action of $(\bZ,+)$ on $K \lpp t \rpp$-valued points
of $X$. Indeed, a $K\lpp t \rpp$-point of $X$ corresponds to a
$K$-algebra homomorphism $f \colon K[X] \to K \lpp t \rpp$, and we let $n
\in \bZ$ send $f$ to the homomorphism, denoted $t^n \star f$, that maps
a homogeneous element $r \in K[X]$ of degree $d$ in the central grading
to $t^{dn} f(r)$. Note that $t^m \star (t^n \star f)=t^{m+n} \star f$
as desired.

\subsection{Mapping spaces} \label{ssec:MappingSpaces}

Let $X$ be an affine $\GL$-variety and let $X_0$ be the spectrum of the
degree-$0$ part of the coordinate ring $K[X]$. Let $\ulambda$ be a pure
tuple. In \cite{polygeom} we constructed the mapping space $\cM_{\ulambda}(X)$
that parametrizes morphisms $\bA^{\ulambda} \to X$ of $\GL$-varieties. On
$\cM_{\ulambda}(X)$ acts the multiplicative group $\bG_m$ via its central
action on $X$.

We show that the central $\bG_m$-action
also yields an action of $(\bZ,+)$ on $K \lpp t \rpp$-valued
points of $\cM_{\ulambda}(X)$. Indeed, a $K\lpp t \rpp$-point
$\gamma$ of $\cM_{\ulambda}(X)$ corresponds to a $\GL$-equivariant
$K$-algebra homomorphism $f \colon K[X] \to K\lpp t \rpp \otimes
\Sym(\bV_{\ulambda})$. An $n \in \bZ$ sends this $f$ to the homomorphism
that maps a degree-$d$ element $r \in K[X]$ to $t^{nd} f(r)$. We write
$t^n \star \gamma$ for the corresponding $K\lpp t \rpp$-point of
$\cM_{\ulambda}$. Let $\phi \colon \cM_{\ulambda}(X) \times \bA^{\ulambda}
\to X$ be the natural map. For $K \lpp t \rpp$-points $\gamma$
of $\cM_{\ulambda}(X)$ and $\delta$ of $\bA^{\ulambda}$ and for $n
\in \bZ$ we then have
\begin{displaymath} \phi(t^n \star \gamma, t^{-n} \star \delta) =
\phi(\gamma,\delta) \in X(K \lpp t \rpp). \end{displaymath}
A straightforward check shows that if $B \subseteq \cM_{\ulambda}(X)$
is a $\bG_m$-stable subvariety, then the $\bZ$-action on $K \lpp t
\rpp$-points of $\cM_{\ulambda}(X)$ preserves $B(K \lpp
t \rpp)$. This, and the following proposition, will be used in \S\ref{ssec:LimB}.

\begin{proposition} \label{prop:map-limit}
Let $\gamma$ be a $K \lpp t \rpp$-point of $\cM_{\ulambda}(X)$ such
that the image of $\gamma$ in $X_0$ converges. Then $t^n \star \gamma$
converges in $\cM_{\ulambda}(X)$ for some $n$.
\end{proposition}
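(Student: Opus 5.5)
We work directly with the coordinate ring of the mapping space, using that it is finitely generated. A $K\lpp t\rpp$-point $\gamma$ of $\cM_{\ulambda}(X)$ is a $\GL$-equivariant homomorphism $f\colon K[X]\to K\lpp t\rpp\otimes\Sym(\bV_{\ulambda})$.

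Since $\ulambda$ is pure, every element of $\Sym(\bV_{\ulambda})$ of central degree $0$ is a scalar. Because $f$ is $\GL$-equivariant, it preserves central degree. So $f$ maps $K[X]_0$ into $K\lpp t\rpp$, and this restriction is exactly the image of $\gamma$ in $X_0$. By hypothesis it lands in $K\lbb t\rbb$.

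Next, choose finitely many homogeneous elements $r_1,\dots,r_k$ of positive central degrees $d_1,\dots,d_k$ whose $\GL$-orbits, together with $K[X]_0$, generate $K[X]$ as a $\GL$-algebra. Then the $\GL$-equivariant homomorphism $f$ is determined by $f|_{K[X]_0}$ and the images $f(r_i)$. Each $f(r_i)$ lies in the finite-dimensional space $K\lpp t\rpp\otimes U_i$, where $U_i$ is the degree-$d_i$ part of $\Sym(\bV_{\ulambda})$ in the relevant isotypic multiplicity space. Concretely, $f(r_i)$ is a $K\lpp t\rpp$-point of the finite-dimensional space $\Hom_{\GL}(\text{span of the orbit of } r_i, \Sym(\bV_{\ulambda})_{d_i})$. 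These spaces are finite dimensional because $\ulambda$ is pure: only finitely many monomial degrees in $\Sym(\bV_{\ulambda})$ contribute to a given central degree $d_i$.

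This gives a closed embedding of $\cM_{\ulambda}(X)$ into $X_0\times\prod_i \bA^{N_i}$. Under this embedding the $\bG_m$-action is trivial on $X_0$ and scales $\bA^{N_i}$ with weight $d_i>0$. Under the same identification, $t^n\star\gamma$ multiplies the coordinates in $\bA^{N_i}$ by $t^{nd_i}$ and leaves the $X_0$-coordinate unchanged.

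To finish, let $-M$ be the minimum $t$-adic valuation among all these finitely many coordinates; this is finite because only finitely many Laurent series are involved. Take $n\ge M$. Since each $d_i\ge 1$, every coordinate of $t^n\star\gamma$ then lies in $K\lbb t\rbb$. A $K\lpp t\rpp$-point of an affine variety with all coordinates in $K\lbb t\rbb$ under a closed embedding into affine space is a $K\lbb t\rbb$-point, so $t^n\star\gamma$ converges.

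The main obstacle is justifying that $\cM_{\ulambda}(X)$ embeds $\bG_m$-equivariantly into $X_0\times$(a linear representation of strictly positive weights). The key points are that $K[\cM_{\ulambda}(X)]$ is generated over $K[X]_0$ by finitely many elements of positive $\bG_m$-weight, and that these correspond to coefficients of the $f(r_i)$. For the existence of $\cM_{\ulambda}(X)$ and its finite type I would invoke the construction in \cite[\S 2.6]{polygeom}. The identification of weights can be checked on the defining formula for $t^n\star$.
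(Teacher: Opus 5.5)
Your argument is correct and essentially the paper's: it uses the same generators $K[X]_0, r_1,\dots,r_k$, the same rescaling $f(r_i)\mapsto t^{nd_i}f(r_i)$, and the same choice of large $n$. The paper skips your closed-embedding packaging by noting that the rescaled homomorphism sends the generators into the $\GL$-stable subalgebra $K\lbb t\rbb\otimes\Sym(\bV_{\ulambda})$, hence all of $K[X]$ lands there; one small fix is to take $n\ge\max(M,0)$ rather than just $n\ge M$, since a negative $n$ with some $d_i>1$ can push a coordinate of positive valuation below zero.
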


\begin{proof} 
As above, the point $\gamma$ corresponds to a $\GL$-equivariant algebra
homomorphism $f \colon K[X] \to K\lpp t \rpp \otimes_K \Sym(\bV_{\ulambda})$. The
assumption that the image of $\gamma$ in $X_0$ converges means that 
$f$ maps the degree-$0$ part $K[X_0]$ of $K[X]$ into (the degree-$0$
part of) $K\lbb t\rbb \otimes_K \Sym(\bV_{\ulambda})$. 
The point $t^n \star \gamma$ corresponds to the algebra
homomorphism $\tilde{f}$ that maps a homogeneous $r \in K[X]$ of degree $d$ 
to $t^{nd} f(r)$. Now $K[X]$ is $\GL$-finitely generated,
say by $K[X_0]$ and homogeneous elements $r_1,\ldots,r_k$ of degrees
$d_1,\ldots,d_k>0$. Let $e_i$ be the minimal exponent of
$t$ appearing in $f(r_i)$. Then the minimal exponent of $t$
in $\tilde{f}(r_i)$ is $e_i + d_i n$. Take $n$ such that
these numbers are all nonnegative. Then $t^n \star \gamma$ is a 
$K\lbb t \rbb$-point of $\cM_{\ulambda}$, as desired. 
\end{proof}

\section{Geometric construction of limits} \label{s:geolim}

\subsection{Overview}

Let $X$ be an affine $\GL$-variety, embedded as a $\GL$-stable closed
subscheme of $\bA^{\ulambda}$. A $K\lpp t \rpp$-point $\gamma(t)$ of $X$
is called \defn{bounded} if there exists an integer $N \geq 0$ such that
$t^N \cdot \gamma(t)$ is a $K\lbb t \rbb$-point of $\bA^{\ulambda}$.
Informally, only finitely many negative exponents of $t$ appear in the
(typically infinitely many) coordinates of $\gamma(t)$. It is not hard to
see that this notion is independent of the choice of a closed embedding
of $X$ into one of the basic $\GL$-schemes, see \cite[\S6.1]{imgclosure}.

Let $\umu$ and $\ulambda$ be tuples, and let $\phi \colon \bA^{\umu}
\to \bA^{\ulambda}$ be a map of $\GL$-varieties. By \cite[Theorem
6.6]{imgclosure} and our assumption in the current paper that $K$ is
algebraically closed, every point of $\ol{\im{\phi}}$ can be realized in
the form $\lim_{t \to 0} \phi(y(t))$, where $y(t)$ is a bounded $K\lpp
t \rpp$-point of $\bA^{\umu}$. It is not difficult to see that one can
in fact take $y(t)$ to be a Laurent polynomial instead of a Laurent
series, as very high powers of $t$ do not affect the limit.

Now, fix nonnegative integers $n$ and $m$, and consider $y(t)$
of the form $\sum_{i=-n}^m t^i v_i$. Such a $y(t)$ is parametrized by
$(v_{-n}, \ldots, v_m)$. We thus see that the space of such $y(t)$'s
is the $\GL$-variety $\bA^{(n+m+1) \cdot \umu}$. As we will see, the
$y(t)$'s for which $\lim_{t \to 0} \phi(y(t))$ exists forms a closed
$\GL$-subvariety $Y$, and the map $\phi$ induces a map $\psi \colon Y \to \bA^{\ulambda}$.
This is a very useful perspective, since it allows us to realize points in the image
closure of $\phi$ as points in the image of $\psi$ (see \S \ref{ss:sketch}). In this section, we develop this idea in detail,
first in the setting above, and then in the more difficult case where
the domain of $\phi$ is of the form $B \times \bA^{\umu}$ where $B$
is a finite dimensional affine variety.

\subsection{First construction}

Let $\phi \colon \bA^{\umu} \to \bA^{\ulambda}$ be a map of
$\GL$-varieties. Then there exists a natural number $d$ such that 
the coordinates of $\phi(x)$ are polynomials of total degree $\le d$
in the coordinates of $x$; we say that $\phi$ has degree $\leq d$. Let $L_{n,m} = \bigoplus_{i=-n}^m K t^i$ be the space of Laurent polynomials involving $t^{-n}, \ldots, t^m$. We let $L_{n,m} \otimes \bA^{\umu}$ denote the $\GL$-variety $\bA^{(n+m+1) \cdot \umu}$; a point of this space can be written as $\sum_{i=-n}^m v_i t^i$ with $v_i \in \bA^{\umu}$. The map $\phi$ induces a map of $\GL$-varieties
\begin{displaymath}
\phi \colon L_{n,m} \otimes \bA^{\umu} \to L_{dn,dm} \otimes \bA^{\ulambda},
\end{displaymath}
by simply plugging the Laurent polynomials into $\phi$. We define $\fL_{n,m}=\fL_{n,m}(\phi)$ to be the inverse image of $L_{0,dm} \otimes \bA^{\ulambda}$ under the above map. This is clearly a closed $\GL$-subvariety of $L_{n,m} \otimes \bA^{\umu}$. Explicitly, $\sum_{i=-n}^m v_i t^i$ belongs to $\fL_{n,m}$ if $\phi(\sum_{i=-n}^m v_i t^i)$ has no negative powers of $t$. We have a natural map of $\GL$-varieties
\begin{displaymath}
\psi \colon \fL_{n,m} \to \bA^{\ulambda}
\end{displaymath}
given by applying $\phi$ and evaluating at $t=0$; the key point here
is that there are no negative powers of $t$, so this is a well-defined
operation. Before going any further with the general theory, we look at an example:

\begin{example}
Let $\umu=[(2),(2),(2)]$, let $\ulambda=[(4)]$, and consider the map
\begin{displaymath}
\phi \colon \bA^{\umu} \to \bA^{\ulambda}, \qquad
\phi(f,g,h)=fg-h^2,
\end{displaymath}
where here $f$, $g$, and $h$ are points in $\bA^{(2)}$. We consider the induced map on $L_{-1,m}$ with $m \ge 1$. Write $f=\sum_{i=-1}^m f_i t^i$, and similarly for $g$ and $h$. We have
\begin{align*}
\phi(f,g,h) =
& \tfrac{1}{t^2}(f_{-1}g_{-1}-h_{-1}^2)
+\tfrac{1}{t}(f_{-1}g_0+f_0g_{-1}-2h_{-1}h_0) \\
&+ (f_{-1}g_1+f_0g_0+f_1g_{-1}-2h_{-1}h_1+h_0^2)+O(t)
\end{align*}
where $O(t)$ means all remaining terms are divisible by $t$. We thus see that $\fL_{-1,m}$ is defined by the equations
\begin{displaymath}
f_{-1}g_{-1}-h_{-1}^2 = 0, \qquad
f_{-1}g_0+f_0g_{-1}-2h_{-1}h_0 = 0,
\end{displaymath}
and the map $\psi\colon \fL_{-1,m} \to \bA^{\ulambda}$ is given by
\begin{displaymath}
(f,g,h) \mapsto f_{-1}g_1+f_0g_0+f_1g_{-1}-2h_{-1}h_1+h_0^2.
\end{displaymath}
Notice that this is essentially independent of $m$ once $m \ge 1$.
\end{example}

The following proposition gives the most important properties of this construction.

\begin{proposition} \label{prop:geolim}
Let $\phi \colon \bA^{\umu} \to \bA^{\ulambda}$ have degree $\le d$.
\begin{enumerate}
\item Let $n \ge 0$ be given, let $m_0=n(d-1)+1$, and let $m \ge m_0$. Given $f,g \in L_{n,m} \otimes \bA^{\umu}$ with $f-g=O(t^{m_0})$, we have $\phi(f)-\phi(g)=O(t)$.
\item Let $n' \ge n$. Then $\fL_{n,m}$ is a closed $\GL$-subvariety of $\fL_{n',m}$.
\item We have $\psi(\fL_{n,m}) \subset \ol{\im{\phi}}$ for all $n$ and $m$.
\item There exists $n$ and $m$ such that we have equality in (c).
\end{enumerate}
\end{proposition}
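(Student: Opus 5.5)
We check the four parts in order. Parts (a)–(c) are direct computations with Laurent polynomials. Part (d) combines (a)–(c) with the image-closure theorem \cite[Theorem 6.6]{imgclosure} and a noetherianity argument.

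For (a), we write $\phi$ coordinatewise as a sum of monomials of degree $k \le d$ in the coordinates of $x$; constant terms cancel in $\phi(f)-\phi(g)$. For one monomial, we telescope the difference of two $k$-fold products $a_1\cdots a_k - b_1 \cdots b_k$ as $\sum_j b_1\cdots b_{j-1}(a_j-b_j)a_{j+1}\cdots a_k$. Every coordinate of $f$ and $g$ is $O(t^{-n})$, and $a_j-b_j=O(t^{m_0})$. So each summand is $O(t^{m_0-n(k-1)}) \subseteq O(t^{m_0-n(d-1)})=O(t)$.

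For (b), we embed $L_{n,m}\otimes\bA^{\umu}$ into $L_{n',m}\otimes\bA^{\umu}$ as the closed $\GL$-subvariety where the coefficients of $t^{-n'},\ldots,t^{-n-1}$ vanish. The defining condition "$\phi(\cdot)$ has no negative powers of $t$" is the same on both sides, so $\fL_{n,m}=\fL_{n',m}\cap (L_{n,m}\otimes\bA^{\umu})$. This is closed, and $\psi$ is compatible with the inclusion.

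For (c), fix a $K$-point $f=\sum v_i t^i\in\fL_{n,m}$ and consider the morphism $\bG_m\to\bA^{\ulambda}$, $t\mapsto\phi(f(t))$. Its image lies in $\im\phi$ because each $f(t_0)$, $t_0\in K^*$, is a $K$-point of $\bA^{\umu}$. Since $\phi(f(t))$ involves no negative powers of $t$, this morphism extends to $\bA^1\to\bA^{\ulambda}$, and its value at $0$ is $\psi(f)$. By continuity $\psi(f)\in\ol{\im\phi}$. The same argument works for points over any field extension, which we will need in (d).

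For (d), let $Z=\ol{\im\phi}$. For each $n$ we fix $m=m(n)\ge n(d-1)+1$, increasing in $n$, and set $C_n=\psi(\fL_{n,m(n)})$. By (a), truncating or extending a Laurent polynomial beyond $t^{m_0}$ does not change $\psi$, so enlarging $m$ does not change the image. Together with (b), this shows that $C_n$ is increasing in $n$. By \cite[Theorem 6.6]{imgclosure}, every $K$-point of $Z$ is $\lim_{t\to0}\phi(y(t))$ for some bounded $y(t)$. By (a) we may truncate $y(t)$ to a Laurent polynomial in $L_{n,m(n)}\otimes\bA^{\umu}$, which then lies in $\fL_{n,m(n)}$. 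Hence $\bigcup_n C_n$ contains all $K$-points of $Z$, and by (c) each $C_n\subseteq Z$.

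The main obstacle is getting a single uniform $n$ rather than one depending on the point. Here we use that images of morphisms of $\GL$-varieties are constructible (proved in \cite{polygeom}/\cite{imgclosure}) and that $\GL$-varieties are topologically noetherian \cite{draisma}. Let $W_n=\ol{Z\setminus C_n}$. This is a decreasing chain of closed $\GL$-stable subsets, so it stabilizes at some $W$. Suppose $W\neq\emptyset$, and let $\eta$ be the generic point of an irreducible component of $W$, with residue field $\Omega$. We apply the image-closure theorem to the $\Omega$-point $\eta$; this needs the version of \cite[Theorem 6.6]{imgclosure} over arbitrary fields, which is the step I expect to require the most care. It gives $\eta\in C_{n'}$ for some $n'$. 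Since $C_{n'}$ is constructible and contains $\eta$, it contains a dense open subset of that component. Then $W_{n'}$ is strictly smaller than $W$, a contradiction. So $W=\emptyset$ for some $n$, and for that $n$ and $m=m(n)$ we get $C_n=Z$. If the field-extension version were unavailable, I would instead base change to an uncountable algebraically closed field, where an increasing countable union of constructible sets covering $Z$ must stabilize, and then descend.
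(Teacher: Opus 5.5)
Your proof is correct and follows the same route as the paper. Parts (a)--(c) are direct computations; for (d), both you and the paper combine Chevalley's theorem for $\GL$-varieties, \cite[Theorem 6.6]{imgclosure}, and $\GL$-noetherianity to show that the increasing chain of images $\psi(\fL_{n,m})$ reaches $\ol{\im\phi}$. Your generic-point argument (applying the image-closure theorem over an algebraic closure of the residue field) makes explicit the stabilization step that the paper compresses into ``this implies $\ol{\im\phi}=Y_n$ for some $n$'', and it is a sound way to fill that in.
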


\begin{proof}
(a), (b), and (c) are easy. For (d), let $Y_n$ be the image of
$\fL_{n,m}$ for $m \gg 0$; this is independent of $m$ by (a). By 
Chevalley's theorem for $\GL$-varieties \cite[Theorem 7.13]{polygeom}, the
$Y_n$'s form an ascending chain of $\GL$-constructible subsets of
$\bA^{\umu}$. By \cite[Theorem 6.6]{imgclosure}, we have
$\ol{\im{\phi}}=\bigcup_{n \ge 1} Y_n$. 
But this implies $\ol{\im{\phi}}=Y_n$ for some $n$, which completes the proof.
\end{proof}

The following is essentially \cite[Theorem 1.5]{imgclosure}

\begin{corollary} \label{cor:boundeddenominator}
Let $\phi \colon \bA^{\umu} \to \bA^{\ulambda}$ be given.
Then there exists an integer $n \ge 0$ with the following
property: if $x$ is a $K$-point of $\ol{\im{\phi}}$,
then $x$ can be realized as $\lim_{t \to 0} \phi(y(t))$,
where $y(t)$ is a $K\lpp t \rpp$-point of $\bA^{\umu}$
such that $t^n y(t)$ has no negative exponents of $t$. 
\end{corollary}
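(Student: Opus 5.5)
The plan is to read the corollary off Proposition~\ref{prop:geolim}(d). That result gives integers $n$ and $m$ with $\psi(\fL_{n,m}) = \ol{\im{\phi}}$. We take this $n$ as the integer in the statement; it depends only on $\phi$, not on the point $x$.

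Now let $x$ be a $K$-point of $\ol{\im{\phi}}$. By the equality of sets in Proposition~\ref{prop:geolim}(d), there is a point $w \in \fL_{n,m}$ with $\psi(w)=x$.

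\begin{itemize}
\item The first thing to check is that $w$ can be taken to be a $K$-point. Since $K$ is algebraically closed, the fiber $\psi^{-1}(x)$ is nonempty. I would argue this on a finite level $\fL_{n,m}\{K^N\}$ for $N \gg 0$, using the fact that $\GL$-varieties are determined by their finite-level pieces. Alternatively, the proof of Proposition~\ref{prop:geolim}(d) via \cite[Theorem 6.6]{imgclosure} directly produces $K$-points.
\item Write $w = \sum_{i=-n}^m v_i t^i$ with $v_i$ $K$-points of $\bA^{\umu}$, and set $y(t) := w$, regarded as a $K\lpp t\rpp$-point (indeed a Laurent polynomial point) of $\bA^{\umu}$.
\end{itemize}

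Then $t^n y(t)$ has no negative exponents of $t$, because the lowest power occurring in $y(t)$ is $t^{-n}$. Membership $w \in \fL_{n,m}$ means that $\phi(y(t))$ has no negative powers of $t$, so $\lim_{t\to 0}\phi(y(t))$ exists. By definition of $\psi$, this limit equals the constant term $\psi(w)=x$.

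The only delicate point is the first bullet, the passage from a scheme-theoretic point of the fiber to a $K$-point. I expect it to be routine given the $K$-point formulation of \cite[Theorem 6.6]{imgclosure} used in the proof of Proposition~\ref{prop:geolim}(d).
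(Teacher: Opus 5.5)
Your proposal is exactly the paper's proof: fix $n$ (and $m \gg 0$) from Proposition~\ref{prop:geolim}(d) so that $\psi\colon \fL_{n,m}\to\ol{\im{\phi}}$ is surjective, lift $x$ to a $K$-point $\sum_{i=-n}^m v_i t^i$ of $\fL_{n,m}$, and take that Laurent polynomial as $y(t)$. The paper also just asserts that a preimage defined over $K$ exists, but your first suggested justification, working on a single finite level $\fL_{n,m}\{K^N\}$, would not work as stated: $x$ need not come from any finite level, and a $K$-point of $\fL_{n,m}$ needs compatible choices at every level $N$.
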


\begin{proof}
Let $n$ be such that $\psi(\fL_{n,m})=\ol{\im{\phi}}$ (for
$m \gg 0$), which exists by the proposition. Thus $\psi
\colon \fL_{n,m} \to \ol{\im{\phi}}$ is a surjection of
$\GL$-varieties. Given $x$ as in the corollary statement, we
can find a pre-image of $x$ in $\fL_{n,m}$ defined over
$K$. This gives the desired point $y(t)$. 
\end{proof}

\begin{remark} \label{rmk:global}
The map $\fL_{n,m} \to \bA^{\ulambda}$ is obtained by evaluating the relevant (Laurent) polynomial at~0. In fact, one can evaluate at non-zero values of $t$ as well. We therefore have a map
\begin{displaymath}
\psi \colon \fL_{n,m} \times \bA^1 \to \bA^{\ulambda}.
\end{displaymath}
We prove this in detail below, in a more general situation.
\end{remark}

\subsection{Second construction}

Suppose now we have a map $\phi \colon B \times \bA^{\umu} \to
\bA^{\ulambda}$, where $B$ is an affine variety. We now examine limits
where we also vary in the $B$ direction. Fix an infinitesimal curve
$\gamma \colon \Spec(K \lbb t \rbb) \to B$. For our purposes, it will suffice to consider the space of limits of the form $\phi(\gamma(t), y(t))$, where $y$ varies.

The basic definitions are similar to before. We have a natural map
\begin{displaymath}
L_{n,m} \otimes \bA^{\umu} \to L_{dn,dm} \otimes \bA^{\ulambda}, \qquad
y(t) \mapsto \phi(\gamma(t), y(t)),
\end{displaymath}
where we simply discard terms past $t^{dm}$. We define $\fL_{n,m}^{\gamma}=\fL_{n,m}^{\gamma}(\phi)$ to be the inverse image of $L_{0,dm} \otimes \bA^{\ulambda}$. There is once again a map
\begin{displaymath}
\psi \colon \fL_{n,m}^{\gamma} \to \bA^{\ulambda}.
\end{displaymath}
It has similar properties to the previous case, with similar proofs.

We are now interested in ``globalizing'' this construction, i.e.,
evaluating away from $t=0$. We discussed this in Remark~\ref{rmk:global}
in the previous iteration of the construction. In the present case, we
must choose a curve in $B$ that is sufficiently tangent to $\gamma$;
we will then be able to define a map on all of the curve. 
For the next proposition, recall that an $m$-jet in $B$ is a
$K \lbb t \rbb / (t^{m+1})$-valued point of $B$.

\begin{proposition} \label{prop:Extendalpha}
Fix $n,m$ with $m$ sufficiently large. Let $C$ be an affine
curve over $K$, let $P$ be a smooth $K$-point of $C$, and
let $t \in K[C]$ be a uniformizer at $P$ that has no other
zero on $C$. Let $\rho \colon C \to B$ be a map such that
the $m$-jet of $B$ defined by $\rho$ with respect to $t$
agrees with $\gamma$. Let $y(t) \in \fL_{n,m}^{\gamma}$.
Then there is a morphism of $K$-schemes
\begin{displaymath}
\alpha \colon C \to \bA^{\ulambda}, \qquad \alpha(c) = \phi(\rho(c), y(t)).
\end{displaymath}
Precisely, for $c \in C \setminus P$, the expression $y(t)$
is $y(t(c))$, i.e., evaluate the function $t$ at $c$
and plug this value into the Laurent polynomial $y(t)$. At
$P$, we have $\alpha(P)=\psi(y(t))$.
\end{proposition}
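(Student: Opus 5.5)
The plan is to build $\alpha$ as a ring homomorphism $\alpha^* \colon \Sym(\bV_{\ulambda}) \to K[C]$. I would first define it on $C \setminus P$, then show that every coordinate extends regularly across $P$, by comparing $\rho$ with $\gamma$ in the completed local ring at $P$.

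\emph{Step 1 (away from $P$).} Since $t \in K[C]$ vanishes only at $P$, the open set $C \setminus P$ is the affine curve $\Spec K[C][1/t]$. The Laurent polynomial $y(t)=\sum_{i=-n}^m v_i t^i$, with $t$ now the function on $C$, defines a morphism $C\setminus P \to \bA^{\umu}$. Composing $(\rho, y(t))$ with $\phi$ gives a morphism $C \setminus P \to \bA^{\ulambda}$, that is, a homomorphism $\Sym(\bV_{\ulambda}) \to K[C][1/t]$. Concretely, $\phi^*$ sends each coordinate $x$ of $\bA^{\ulambda}$ to a polynomial $\sum_\beta b_\beta\, u^\beta$, where the $u^\beta$ are monomials of degree $\le d$ in the coordinates $u$ of $\bA^{\umu}$ and $b_\beta \in K[B]$. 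The image of $x$ is therefore $\sum_\beta \rho^*(b_\beta)\, y(t)^\beta$. The degree bound $d$ is uniform over all (infinitely many) coordinates, and this uniformity is what makes a single choice of $m$ work below.

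\emph{Step 2 (regularity at $P$).} Because $P$ is smooth with uniformizer $t$, the completion $\widehat{\mathcal O}_{C,P}$ is $K\lbb t\rbb$, and $\rho$ induces a $K\lbb t\rbb$-point $\hat\rho$ of $B$. By hypothesis $\hat\rho \equiv \gamma \pmod{t^{m+1}}$, so $\hat\rho^*(b_\beta)-\gamma^*(b_\beta) = O(t^{m+1})$ for every $\beta$. Each monomial $y(t)^\beta$ has $t$-adic order $\ge -dn$. Hence, in $K\lpp t\rpp$,
\begin{displaymath}
\textstyle\sum_\beta \hat\rho^*(b_\beta)\, y(t)^\beta \;-\; \sum_\beta \gamma^*(b_\beta)\, y(t)^\beta = O(t^{m+1-dn}).
\end{displaymath}
The second sum is the $x$-coordinate of $\phi(\gamma(t),y(t))$. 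Since $y(t) \in \fL^{\gamma}_{n,m}$, this coordinate has no negative powers of $t$; the truncation past $t^{dm}$ in the definition of $\fL^\gamma_{n,m}$ is harmless, because negative powers are untouched by it. Taking $m \ge dn$ (this is the meaning of ``$m$ sufficiently large''), the first sum also lies in $K\lbb t\rbb$. Moreover its constant term equals that of the second sum, which is the $x$-coordinate of $\psi(y(t))$.

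\emph{Step 3 (gluing).} An element $f \in K[C][1/t]$ whose image in $\widehat{\mathcal O}_{C,P}$ has no pole lies in $\mathcal O_{C,P}$. This holds because $\mathcal O_{C,P}$ is a DVR and the completion detects valuations. So $f$ is regular on $C\setminus P$ and on a neighborhood of $P$, hence is a global section of $\mathcal O_C$. Since $C$ is affine, $f \in K[C]$; note that no normality of $C$ away from $P$ is needed. Applying this to each coordinate gives a ring homomorphism $\Sym(\bV_{\ulambda}) \to K[C]$. Its well-definedness as a homomorphism is inherited from $K[C] \subset K[C][1/t]$, which is injective because $t$ is a nonzerodivisor: it is not a zero divisor on a reduced curve all of whose components meet $C \setminus P$. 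This homomorphism is $\alpha$. Its value at $P$ is the constant term computed in Step 2, namely $\psi(y(t))$.

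I expect the main obstacle to be Step 2. The point is to control the error from replacing $\gamma$ by the actual curve $\rho$ uniformly over infinitely many coordinates. The uniform degree bound $d$ on $\phi$ gives the pole bound $dn$, and this is what determines how large $m$ must be.
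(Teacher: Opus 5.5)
Your proposal is correct and follows the same route as the paper: you define $\alpha$ on $C\setminus P$, then check regularity at $P$ by comparing $\phi(\hat\rho(t),y(t))$ with $\phi(\gamma(t),y(t))$ in $K\lpp t\rpp$, using the hypothesis that the $m$-jets agree. You also make explicit two points the paper leaves implicit. The first is the bound $m\ge dn$, which comes from the pole order $dn$ of the monomials in $y(t)$. The second is the gluing step showing that absence of poles in the completion at $P$ yields a regular function on all of $C$.
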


\begin{proof}
Since $P$ is the only zero of $t$ on $C$, the map $\alpha$ is well-defined
on $C \setminus P$. We only need to verify that this map extends
to a morphism on all of $C$ by setting $\alpha(P):=\psi(y(t))$. Let
$\delta(t)$ be the $K \lbb t \rbb$-point of $C$ defined by $P$ and $t$.
Then it suffices to verify that $\lim_{t \to 0} \alpha(\delta(t))$
exists and equals $\psi(y(t))$. Now
\begin{displaymath} \alpha(\delta(t))=\phi(\rho(\delta(t)),y(t)) \end{displaymath}
and since, by assumption, $\rho(\delta(t))$ and $\gamma(t)$
define the same $m$-jet of $B$, and since $m$ is
sufficiently large, we have 
\begin{displaymath} \lim_{t \to 0} \phi(\rho(\delta(t)),y(t))
= \lim_{t \to 0} \phi(\gamma(t),y(t)) = \psi(t), \end{displaymath}
as desired. 
\end{proof}

Let $C \to S$ be a family of curves (see \S \ref{ss:not}) and let $P \colon S \to C$ be a
smooth point, i.e., a morphism $S \to C$ that maps each $K$-point $s
\in S$ to a smooth $K$-point of the fiber $C_s$ over $s$.  Recall that
then the image of $P$ is a relative effective Cartier divisor on $C$;
we will denote this image also by $P$. A uniformizer for $C$ at $P$ is a
regular function that generates the ideal of $P$, in a neighborhood of $P$. In
general, these only exist locally on $C$;
in the next proposition we assume that a global uniformizer exists.

\begin{proposition} \label{prop:glob-lim}
Fix $n,m$ with $m$ sufficiently large. Let $S$ be an affine
variety over $K$, let $C \to S$ be a family of affine
curves, let $P \colon S \to C$ be a smooth point of $C$
and let $t \in K[C]$ be a global uniformizer at $P$ having no other zeros. Let $\rho \colon C \to B$ be a map of varieties such that for each $K$-point $s \in S$ the $m$-jet defined by $\rho_s \colon C_s \to B$ with respect to $t \vert_{C_s}$ coincides with $\gamma$. Then there is a unique map of $\GL$-varieties
\begin{displaymath}
\alpha \colon C \times \fL_{n,m}^{\gamma} \to \bA^{\ulambda}
\end{displaymath}
that agrees with the map in Proposition~\ref{prop:Extendalpha} on each fiber above $S \times \fL_{n,m}^{\gamma}$.
\end{proposition}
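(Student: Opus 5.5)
Uniqueness is immediate: $C \times \fL_{n,m}^{\gamma}$ is reduced, and its $K$-points (over an algebraically closed field) are dense, so two morphisms agreeing on all fibers over $K$-points of $S \times \fL^{\gamma}_{n,m}$ are equal. For existence, the plan is to write the map down algebraically on the open set $U = C \setminus P$, where $t$ is invertible, and then show that it extends across the Cartier divisor $P \times \fL^{\gamma}_{n,m}$.

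On $U \times \fL^{\gamma}_{n,m}$ we define $\alpha_U(c,y) = \phi(\rho(c), \sum_i v_i t(c)^i)$. This is a morphism, because $t^{-1} \in K[U]$, and it is $\GL$-equivariant since $\phi$ is and the $v_i$ enter linearly. Concretely, each coordinate of $\alpha_U$ is a polynomial in the coordinates of the $v_i$ with coefficients in $K[C][t^{-1}]$. Expanding in powers of $t$, such a coordinate has the form $\sum_{j \ge -dn} t^j F_j$, where $F_j$ lies in $K[C] \otimes K[\fL]$, the $K[C]$-dependence coming from $\rho$ through $\phi$. The task is to show that each coordinate already lies in $K[C] \otimes K[\fL^{\gamma}_{n,m}]$. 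Since $P$ is a relative Cartier divisor with global generator $t$, and $C \to S$ is flat, $t$ is a nonzerodivisor on $K[C] \otimes K[\fL]$. Extension therefore amounts to showing that $t^{N} \cdot(\text{coordinate})$ is divisible by $t^N$ in $K[C]\otimes K[\fL]$, where $N=dn$.

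The main step is to compare with $\gamma$. The hypothesis on $m$-jets says that, fiberwise, $\rho \equiv \gamma \bmod t^{m+1}$. We want a global version of this: modulo $t^{m+1}$, the pullback $\rho^*$ on $K[B]$ should factor as $K[B] \xrightarrow{\gamma^*} K[t]/(t^{m+1}) \to K[C]/(t^{m+1})$. This is where I expect the real obstacle, since the hypothesis is only given fiberwise on $K$-points of $S$. I would first argue that $K[C]/(t^{m+1})$ is flat over $K[S]$, using that $t$ is a nonzerodivisor on fibers. Smoothness of $P$ then gives $K[C]/(t^{m+1}) \cong K[S][t]/(t^{m+1})$, by lifting the local parameter. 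With this identification, the difference $\rho^* - \gamma^*$ modulo $t^{m+1}$ has coefficients in $K[S]$ that vanish at every $K$-point, hence vanish because $S$ is reduced.

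Given this, write $\rho^*(b) = \gamma^*(b) + t^{m+1} r_b$ for generators $b$ of $K[B]$. The $t$-negative part of $t^{N}\alpha_U^*$ modulo $t^{N}$ then coincides with that of $\phi(\gamma(t), y(t))$, provided $m+1 - dn > 0$; this is the meaning of ``$m$ sufficiently large'', as in Proposition~\ref{prop:geolim}(a). That negative part vanishes on $\fL^{\gamma}_{n,m}$ by definition. Hence $t^N\alpha_U^*$ is divisible by $t^N$ modulo the ideal of $\fL^\gamma_{n,m}$, which gives the extension $\alpha$.

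Finally, restricting $\alpha$ to $t=0$ gives $\psi(y)$, and restricting to a fiber $C_s$ recovers the map of Proposition~\ref{prop:Extendalpha}. $\GL$-equivariance of $\alpha$ follows from density of $U$.
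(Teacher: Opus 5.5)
Your proof is correct, but it is organized differently from the paper's. The paper never globalizes the jet hypothesis. It expands each coordinate $f$ of the map on $(C\setminus P)\times\fL^{\gamma}_{n,m}$ as $\sum_{i=1}^N a_i t^{-i}+b$ with $b$ regular, peeling off the polar coefficients one at a time by restricting $t^i f$ along the section $P$, so that the $a_i$ are functions on $S\times\fL^{\gamma}_{n,m}$. It then invokes Proposition~\ref{prop:Extendalpha} as a black box at each $K$-point $(s,x)$ to get $a_i(s,x)=0$, hence $a_i=0$. You instead use the ``vanishes at all $K$-points, hence vanishes'' step only on the finite-type reduced variety $S$. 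There it upgrades the fiberwise hypothesis to the global congruence $\rho^*\equiv\gamma^*\pmod{t^{m+1}}$ in $K[C]$, with $\gamma^*$ truncated at degree $m$. You then verify the extension by direct computation: the polar part of $\phi(\rho,y(t))$ equals that of $\phi(\gamma(t),y(t))$, which vanishes on $\fL^{\gamma}_{n,m}$ by its definition. In effect you reprove Proposition~\ref{prop:Extendalpha} in families rather than quoting it. The paper's route is shorter and purely formal, but it needs density of $K$-points on the infinite-dimensional scheme $S\times\fL^{\gamma}_{n,m}$; this is true via finite truncations but left unstated. It also leaves ``$m$ sufficiently large'' implicit. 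Your route makes the bound explicit (of order $dn$), and the extension step does not even use reducedness of $\fL^{\gamma}_{n,m}$. One point of precision: the isomorphism $K[C]/(t^{m+1})\cong K[S][t]/(t^{m+1})$ is most cleanly justified by two facts. First, $t$ cuts out $P(S)$ scheme-theoretically, since it generates the ideal of $P$ near $P$ and has no other zeros. Second, $t$ is a nonzerodivisor on $K[C]$. Together these give $t^jK[C]/t^{j+1}K[C]\cong K[C]/(t)\cong K[S]$ for all $j$, and flatness of $K[C]/(t^{m+1})$ over $K[S]$ is not needed for this.
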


\begin{proof}
There is certainly a map as above defined on $(C \setminus P)
\times \fL_{n,m}^{\gamma}$. Pick a coordinate on $\bA^{\ulambda}$.
This component of the map is a global function $f$ on $(C \setminus
P) \times \fL_{n,m}^{\gamma}$. We can thus write $f$ in the form
$\sum_{i=1}^N a_i t^{-i} + b$, where $b$ is a global function on $C \times
\fL_{n,m}^{\gamma}$, and the $a_i$'s are global functions on $S \times
\fL_{n,m}^{\gamma}$. This is just the expansion of $f$ near $P$: for some
$N \geq 0$, $t^N \cdot f$ lies in $K[C \times \fL_{n,m}^\gamma]$, and we
let $a_N \in K[S \times \fL_{n,m}^\gamma]$ be the image of $t^N \cdot f$ 
under the homomorphism dual to $P\colon S \to C$, and then regard $a_N$ as an
element of $K[C \times \fL_{n,m}^\gamma]$ via the homomorphism dual to
$C \to S$. Then $t^N \cdot f - a_N$ vanishes identically on $P$ and hence lies in the ideal generated by
$t$, hence $\tilde{f}:=f-t^{-N}a_N$ has the property that $t^{N-1}
\cdot \tilde{f}$ lies in $K[C \times \fL_{n,m}^\gamma]$, etc.
If $(s,x)$ is a $K$-point of $S \times \fL_{n,m}^{\gamma}$
then by Proposition~\ref{prop:Extendalpha}, the function extends over $P$
on the $(s,x)$ fiber. This tells us that $a_i(s,x)=0$ for all $1 \le i
\le N$. Thus $a_i=0$ identically, which shows that we have a function
on all of $C \times \fL_{n,m}^{\gamma}$.
\end{proof}

\section{Enlarging the image} \label{s:enlarge}

\subsection{Statement of results}

The purpose of \S \ref{s:enlarge} is to prove the following theorem, which is the first step of the proof sketch given in \S \ref{ss:sketch}.

\begin{theorem} \label{thm:extend}
Let $X$ be an irreducible $\GL$-variety, let $\umu$ be a pure tuple,
let $\phi \colon B
\times \bA^{\umu} \to X$ be a dominant morphism, and let $x$
be a $K$-point of $X$. Then there exists an
irreducible closed $\GL$-subvariety $Y$ of $\bA^r \times
\bA^{s \cdot \umu}$, for some $r$ and $s$, and a morphism $\psi \colon Y \to X$ of $\GL$-varieties such that $\im(\phi) \cup \{x\} \subset \im(\psi)$.
\end{theorem}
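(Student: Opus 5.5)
Assume $x \notin \im(\phi)$; otherwise take $Y = B \times \bA^{\umu}$, embedded in some $\bA^r \times \bA^{\umu}$ via an embedding $B \subset \bA^r$, with $\psi=\phi$.

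\textbf{Step 1: realize $x$ as a limit.} Since $x \in X = \ol{\im(\phi)}$, the bounded-limit results of \cite[Theorem 6.6]{imgclosure} (applied to the composite $B \times \bA^{\umu} \to X \subset \bA^{\ulambda}$, extended as in Corollary~\ref{cor:boundeddenominator} to allow a finite-dimensional factor) give the following data. There is an infinitesimal curve $\gamma\colon \Spec K\lbb t\rbb \to B$ and a Laurent polynomial $y(t)=\sum_{i=-n}^m v_i t^i$ with
\[
x=\lim_{t\to 0}\phi(\gamma(t),y(t)).
\]
Here we may reparametrize $t$ and use that $B$ is of finite type. Equivalently, $y(t)$ is a $K$-point of $\fL^\gamma_{n,m}$ with $\psi(y)=x$, and we take $m$ large.

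\textbf{Step 2: globalize along a curve.} Truncating $\gamma$ to its $m$-jet, choose an irreducible affine curve $C \subset B$, or more precisely a map $\rho\colon C\to B$ from a smooth irreducible affine curve, with the following properties: a point $P$, a uniformizer $t$ having no other zeros (shrink $C$), and the $m$-jet of $\rho$ at $P$ equal to that of $\gamma$. Such curves exist because jets of $B$ at a point lift to curves through that point. If $\gamma$ is not algebraic we first pass to the jet, which suffices by Proposition~\ref{prop:geolim}(a)-type truncation.

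Proposition~\ref{prop:Extendalpha} then gives $\alpha\colon C\times \fL^\gamma_{n,m}\to \bA^{\ulambda}$, which is a morphism of $\GL$-varieties by Proposition~\ref{prop:glob-lim} with $S$ a point. Off $P$ its image lies in $\im\phi$, hence by continuity in $X$. At $(P,y)$ it takes the value $x$.

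\textbf{Step 3: combine and make irreducible.} This is the main obstacle. We need a single irreducible $Y$ mapping onto a set containing both $\im\phi$ and $x$. $C\times \fL^\gamma_{n,m}$ need not be irreducible, and it does not obviously contain $\im\phi$.

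First, to contain $\im\phi$, enlarge the domain by letting the curve vary: take $S$ to be an irreducible family. For instance, take
\[
C' = B\times C \to S=B,
\]
with $\rho'(b,c)$ interpolating so that the fibre over $b$ is a curve through $b$ with the prescribed jet at $P$. This is awkward in general, so the simpler route is to use the constant Laurent polynomials. Note that $L_{0,m}\otimes\bA^{\umu}\subset \fL^\gamma_{n,m}$, and
\[
\alpha(c, v_0+\dots)=\phi(\rho(c),v(t(c))).
\]
To hit arbitrary $(b,v)$, replace $C\times\fL$ by $Z=\{(b,c,y)\}\subset B\times C\times \fL^\gamma_{n,m}\times \bA^{\umu}$ and use a map combining $\phi(b,w)$ on one part.

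Cleanest is to take the irreducible component: let $Y_0$ be the irreducible component of $\fL^\gamma_{n,m}$ containing the constants $L_{0,m}\otimes \bA^{\umu}$ (irreducible), and hope that $y$ lies in it. If not, deform: consider $y+\epsilon$-families, i.e.\ the closure of $\{y+w : w\in L_{0,m}\otimes\bA^{\umu}\}$ together with $\GL$-translates. This is where I expect difficulty. My first try is to replace $Y$ by the image of the irreducible variety
\[
\bA^1_u \times C \times \bA^{\umu}\times(\text{line through } y),
\]
namely $(u,c,w)\mapsto \phi$ evaluated at $\rho(c)$ and $u\,y(t)+w$ suitably rescaled. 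Alternatively, we can use that $\GL$-orbit closures of points in $\bA^{s\umu}$ are irreducible: take $Y$ to be the closure of $\GL\cdot(C\times\{y\}\times\bA^{\umu})$ inside the limit locus. This is irreducible since $\GL$ is a union of irreducible groups and the other factors are irreducible.

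Then $\psi$ is defined on all of $Y$ by Proposition~\ref{prop:glob-lim}, since closedness of $\fL$ and of $C\times\fL$ ensures the extension. Its image contains $x=\psi(P,y)$, and contains $\phi(b,w)$ because points $(c,w)$ with $\rho(c)=b$ cover $b\in\rho(C)$. For $b\notin\rho(C)$, replace $C$ by a family of curves over an irreducible base $S\subset B$ sweeping $B$, which is exactly why Proposition~\ref{prop:glob-lim} is stated over a base $S$. Finally, embed $S$ and $C$ in an affine space $\bA^r$ to land in $\bA^r\times\bA^{s\cdot\umu}$.
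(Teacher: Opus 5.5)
There are genuine gaps, at exactly the places where the paper does its real work.

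\textbf{Step 1 and the family of curves.} Step~1 is unjustified. \cite[Theorem 6.6]{imgclosure} only gives $\gamma$ as a $K\lpp t\rpp$-point of $B$, and no reparametrization of $t$ makes it converge. For example, take $B=\bA^1\setminus\{0\}$ and $\phi(b,v)=(b^{-1},v)$ into $X=\bA^1\times\bA^{(1)}$. The points $(0,w)$ are then reachable only with $b\to\infty$.

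The paper first replaces $B$ by a larger smooth affine $B'$ with $\im(\phi)\subset\im(\phi')$ (Proposition~\ref{prop:mapping}). It passes to the closure of the $\bG_m$-saturation of the image of $B$ in $\cM_{\umu}(X)$. It then uses $t^n\star\gamma$ to shift the poles out of the $B$-coordinate and into the $\bA^{\umu}$-coordinate (Proposition~\ref{prop:map-limit}). Finally it resolves singularities, where properness makes the lifted arc converge, and applies Jouanolou's trick.

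Smoothness of $B'$ is what makes possible the construction you only gesture at. That construction is an irreducible family $C\to S$ with a section $P$ and a global uniformizer $t$, such that every fibre has the prescribed $m$-jet at $P$ and $\rho\colon C\setminus P\to B'$ is surjective (Proposition~\ref{prop:jets}). It is built from a finite map $B'\to\bA^{\dim B'}$ that is \'etale over the base point of the jet, pulled back along $\sigma(\tau,v)=c(\tau)+\tau^{m+1}v$. A single curve misses most of $B$, so without this family you cannot reach $\im(\phi)$.

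\textbf{Step 3 is missing the key trick.} The paper does the following:
\begin{itemize}
\item It takes $Y=C\times\fL'\times\bA^{\umu}$, where $\fL'$ is any irreducible component of $\fL^\gamma_{n,m}$ containing your $y$.
\item It sets $\psi(c,z,u)=\phi(\rho(c),z(t)+t^{m+1}u)$.
\item Because $m\gg0$, the perturbation $t^{m+1}u$ changes neither the existence nor the value of the limit at $P$, so $x=\psi(P(s),y,0)$.
\item At $c\notin P$ one has $t(c)\neq 0$. Choosing $u=t(c)^{-(m+1)}(w-y(t(c)))$ gives $\psi(c,y,u)=\phi(\rho(c),w)$ for arbitrary $w$.
\end{itemize}
So the same component that contains $y$ also covers $\im(\phi)$, and irreducibility comes for free.

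Your alternatives do not achieve this:
\begin{itemize}
\item The component containing the constant Laurent polynomials need not contain $y$.
\item $y+w$ with $w$ constant is generally not in $\fL^\gamma_{n,m}$. Products of $w$ with the polar part of $y$ create new negative powers of $t$; compare the $t^{-1}$-coefficient $f_{-1}g_0+f_0g_{-1}-2h_{-1}h_0$ in the example $fg-h^2$, where adding a constant to $g$ adds $f_{-1}w$.
\item In the orbit-closure proposal you never say how the $\bA^{\umu}$-factor enters $\psi$. So neither the extension over $P$ nor the containment $\im(\phi)\subset\im(\psi)$ follows.
\end{itemize}
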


\subsection{Limits in the $B$ direction} \label{ssec:LimB}

The following proposition helps us deal with limits as we move in the $B$ direction.

\begin{proposition} \label{prop:mapping}
Let $X$ be an irreducible affine $\GL$-variety, let $\umu$ be a pure tuple, and let $\phi \colon B \times \bA^{\umu} \to X$ be a dominant morphism with $B$ irreducible. Then there exists an irreducible smooth affine variety $B'$ and a dominant morphism $\phi' \colon B' \times \bA^{\umu} \to X$ such that:
\begin{enumerate}
\item We have $\im(\phi) \subset \im(\phi')$.
\item Given $x \in X$, we can write $x=\lim_{t \to 0} \phi'(\gamma(t),
v(t))$ where $\gamma(t)$ is a $K \lbb t \rbb$-point of $B'$, and $v(t)$ is a 
bounded $K \lpp t \rpp$-point of $\bA^{\umu}$.
\end{enumerate}
\end{proposition}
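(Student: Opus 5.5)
The plan is to replace $B$ by a subvariety of the mapping space $\cM_{\umu}(X)$, where the $\bZ$-action of Proposition~\ref{prop:map-limit} is available. That step handles convergence in the $B$ direction; smoothness and affineness are then obtained by a resolution followed by an affine-bundle trick.

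\emph{Step 1 (pass to the mapping space).} The morphism $\phi$ determines a morphism $\beta \colon B \to \cM_{\umu}(X)$, $b \mapsto \phi(b,\cdot)$. Let $B''$ be the closure of $\bG_m \cdot \beta(B)$, using the central $\bG_m$-action on $\cM_{\umu}(X)$. Then $B''$ is irreducible, affine and $\bG_m$-stable. Evaluation gives $\phi'' \colon B'' \times \bA^{\umu} \to X$, which factors $\phi$ through $\beta \times \id$. Hence $\im(\phi) \subset \im(\phi'')$, and $\phi''$ is dominant.

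\emph{Step 2 (limits).} Let $x \in X$. Since $X = \ol{\im \phi}$, the bounded-denominator results of \cite{imgclosure} (the analogue of Corollary~\ref{cor:boundeddenominator} for domains $B \times \bA^{\umu}$, i.e.\ \cite[Theorem 6.6]{imgclosure}) give $x = \lim_{t\to 0}\phi(b(t),v(t))$. Here $b(t)$ is a $K\lpp t\rpp$-point of $B$ and $v(t)$ is a bounded point of $\bA^{\umu}$, possibly after the substitution $t \mapsto t^{1/k}$. This $b(t)$ need not converge in $B$. Put $\gamma = \beta(b(t))$.

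Because $\umu$ is pure, the composite $B \times \bA^{\umu} \to X \to X_0$ does not depend on the $\bA^{\umu}$-coordinate. So the image of $\gamma$ in $X_0$ equals the image of $\phi(b(t),v(t))$ in $X_0$, and this converges to the image of $x$. By Proposition~\ref{prop:map-limit}, $t^n \star \gamma$ converges in $\cM_{\umu}(X)$ for some $n$. Since $B''$ is $\bG_m$-stable, $t^n\star\gamma$ is a $K\lbb t\rbb$-point of $B''$. The identity $\phi''(t^n\star\gamma, t^{-n}\star v) = \phi''(\gamma,v)$ then gives $x = \lim \phi''(t^n\star \gamma, t^{-n}\star v)$, and $t^{-n}\star v$ is still bounded.

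\emph{Step 3 (making the base smooth and affine).} Take a resolution of singularities $\pi \colon \tilde B \to B''$. It is proper, birational and surjective, and $\tilde B$ is smooth, irreducible and quasi-projective. Apply Jouanolou's trick to get an affine-space torsor $q \colon B' \to \tilde B$ with $B'$ smooth, irreducible and affine. Set $\phi' = \phi''\circ((\pi q)\times \id)$. Because $\pi q$ is surjective, $\im(\phi') = \im(\phi'') \supset \im(\phi)$, which is (a).

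For (b), we lift the $K\lbb t\rbb$-point $t^n\star\gamma$ of $B''$. Its generic point lies in the image of the proper surjection $\pi$. So it lifts to $\tilde B$ over a finite extension of $K\lpp t\rpp$; since $K$ is algebraically closed of characteristic $0$, such an extension is $K\lpp t^{1/k}\rpp$. By the valuative criterion the lift extends to a $K\lbb t^{1/k}\rbb$-point. After reparametrizing $t^{1/k}\mapsto t$, which preserves boundedness of $v$ and the limit, we have a $K\lbb t\rbb$-point of $\tilde B$. It lifts further to $B'$ because torsors under vector bundles over the complete local ring $K\lbb t\rbb$ are trivial.

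I expect the main obstacle to be Step 2. The points to check are that the correct bounded-limit statement for domains $B\times\bA^{\umu}$ is available, and that the arc lies in $\bG_m$-stable $B''$ rather than merely in $\cM_{\umu}(X)$. The latter holds because $b(t)$ maps into $\beta(B)$ and $B''$ is closed and $\bG_m$-stable.
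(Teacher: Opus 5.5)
Your proposal is correct and follows the paper's proof essentially step for step. It passes to the image in $\cM_{\umu}(X)$, takes the closure of its $\bG_m$-orbit, applies resolution of singularities and Jouanolou's trick, and then runs the same limit argument: \cite[Theorem 6.6]{imgclosure}, the $\bZ$-action of Proposition~\ref{prop:map-limit}, the valuative criterion, and a lift along the affine bundle. Your additional check that the image of $\gamma$ in $X_0$ converges, using purity of $\umu$, verifies the hypothesis of Proposition~\ref{prop:map-limit}, a step the paper leaves implicit.
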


The key point here is that the $\gamma$ in (b) is actually a $K
\lbb t \rbb$-point, and not just a $K\lpp t \rpp$-point. In other words, we can realize $x$ as the limit of a 1-parameter family in $B' \times \bA^{\umu}$ that already converges in the $B'$ coordinate.

\begin{proof}
Recall from \S\ref{ssec:MappingSpaces} that the mapping
space $\cM_{\umu}(X)$ is an affine variety equip\-ped with a 
$\bG_m$-action coming from the central $\bG_m$-action on
$X$.  The map $\phi$ corresponds to a map $B \to \cM_{\umu}(X)$. Now:
\begin{itemize}
\item Let $B_1 \subset \cM_{\umu}(X)$ be the image of $B$ in $\cM_{\umu}(X)$.
\item Let $B_2$ be the closure of $\bG_m \cdot B_1$.
\item Let $\pi \colon B_3 \to B_2$ be a resolution of singularities.
\item Let $\sigma \colon B_4 \to B_3$ be an affine space bundle with $B_4$ affine (Jouanolou's trick).
\end{itemize}
Let $\phi_i \colon B_i \times \bA^{\umu} \to X$ be the
natural map. We take $B'=B_4$ and $\phi'=\phi_4$. Note that
$B'$ is indeed an irreducible smooth affine variety. Since $B$ maps into $B_2$ and $B'$ surjects on $B_2$, condition (a) is clear. 

We now verify (b). Let $x \in X$ be given. By~\cite[Theorem 6.6]{imgclosure}, we can write $x=\lim_{t \to 0}
\phi_2(\gamma_2(t), v(t))$ where $\gamma_2(t)$ is a $K
\lpp t \rpp$-point of $B_2$ and $v(t)$ is a bounded $K
\lpp t \rpp$-point of $\bA^{\umu}$. For any $n \in \bZ$, we
have $t^n \star \gamma_2(t) \in B_2(K \lpp t \rpp)$ and 
\begin{displaymath}
\phi_2(\gamma_2(t), v(t))=\phi_2(t^n \star \gamma_2(t),
t^{-n} \star v(t));
\end{displaymath}
see \S\ref{ssec:MappingSpaces}. Choose $n$ such that $t^n
\star \gamma_2(t)$ has a limit in $B_2$
(Proposition~\ref{prop:map-limit}). Note that $t^{-n} \star v(t)$ is
still bounded. Relabeling, we simply assume that $\gamma_2(t)$ is a $K\lbb t \rbb$-point of $B_2$.

Since $\pi$ is surjective, $\gamma_2$ lifts to a $K\lpp t^{1/m}
\rpp$-point $\gamma_3$ of $B_3$ for some $m \ge 1$. Since $\lim_{t \to
0} \pi(\gamma_3(t))$ exists and $\pi$ is proper, it follows that
$\lim_{t \to 0} \gamma_3(t)$ exists. More precisely, the valuative
criterion for properness shows that $\gamma_3$ extends uniquely to a
$K \lbb t^{1/m} \rbb$-point of $B_3$, which we still denote by $\gamma_3$. Changing $t$ to $t^m$, we assume $m=1$. (Note that after this change of variables $v(t)$ is still bounded.)

Finally, since $\sigma$ is smooth, $\gamma_3$ lifts to a $K \lbb t \rbb$-point $\gamma_4$ of $B_4$. We have
\begin{displaymath}
\lim_{t \to 0} \phi_4(\gamma_4(t), v(t))=\lim_{t \to 0} \phi_2(\gamma_2(t), v(t)) = x.
\end{displaymath}
This completes the proof.
\end{proof}

\subsection{Construction of curves} \label{ss:jets}

We now construct a family of curves in $B$ that realizes a fixed jet at a specified point, and hits every other point of $B$.

\begin{proposition} \label{prop:jets}
Let $B$ be an irreducible smooth affine variety, let $m \ge 0$ be an integer, and let $\gamma \colon \Spec(k\lbb t \rbb/(t^{m+1})) \to B$ be an $m$-jet. We can find:
\begin{itemize}
\item An irreducible affine variety $S$.
\item A flat family of curves $C \to S$, with $C$ irreducible and affine.
\item A smooth point $P \colon S \to C$.
\item A global uniformizer $t \in K[C]$, having no zeros away from $P$.
\item A map of varieties $\rho \colon C \to B$.
\end{itemize}
such that:
\begin{enumerate}
\item For $s \in S(K)$, the $m$-jet defined by $\rho_s \colon C_s \to B$ with respect to $t$ agrees with $\gamma$.
\item The map $\rho \colon C \setminus P \to B$ is surjective.
\end{enumerate}
\end{proposition}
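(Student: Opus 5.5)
Since $B$ is smooth and affine, I will realize the jet $\gamma$ by polynomial curves. Embed $B \subset \bA^N$ as a closed subvariety, and let $\gamma(t)=\sum_{i=0}^m g_i t^i \bmod t^{m+1}$ with $g_i \in K^N$, where $g_0=\gamma(0)\in B$. The plan is to take $S$ to be an open subset of the space of pairs $(b,\text{auxiliary data})$ with $b\in B$. Over each such pair I want an irreducible curve $C_s\subset B$ that passes through $b$ and has $m$-jet $\gamma$ at a smooth point. The parameter $t$ should be a function on $C_s$ vanishing only at that point. Condition (b) then holds because $b$ lies on $C_s\setminus P$.

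To build $C_s$, I will use the smoothness of $B$ at $g_0$. Choose an étale-type local structure: by a generic linear projection $\pi\colon\bA^N\to\bA^k$ with $k=\dim B$, the map $B\to\bA^k$ is étale at $g_0$. Here is a cleaner global alternative that avoids local charts. Take the polynomial curve $c(t)=\sum_{i=0}^m g_i t^i + t^{m+1}h(t)$ in $\bA^N$, where $h$ is a polynomial chosen so that $c(1)=b$. This curve generally does not lie in $B$. Instead, I will use the standard fact that a smooth irreducible variety is covered by complete-intersection curves through prescribed points with prescribed jets.

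Concretely, intersect $B$ with $k-1$ hypersurfaces $H_1,\dots,H_{k-1}$ of large degree $e$. I require these hypersurfaces to contain the image of the jet $\gamma$, meaning that their equations vanish to order $m+1$ along $\gamma$, and to pass through $b$. Both are linear conditions on the coefficients. For $e\gg0$ these conditions are independent, by the surjectivity of restriction of degree-$e$ polynomials onto finite jets and points. A Bertini-type argument with base conditions should then show that, for generic such data, $B\cap H_1\cap\dots\cap H_{k-1}$ is a curve that is smooth at $g_0$ and irreducible, or at least has an irreducible component $C_s$ containing both $g_0$ and $b$. At $g_0$ this curve is tangent to $\gamma$ to order $m$. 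I will take $S$ to be the irreducible open subset of the affine space of $(b,H_1,\dots,H_{k-1})$ where these properties hold, and $C$ the universal curve. Flatness over $S$ comes from shrinking $S$ by generic flatness, while keeping surjectivity in $b$ because the $b$ coordinate is free.

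For the uniformizer, I will pick a linear function $\ell$ on $\bA^N$ with $\ell(g_0)=0$ and $\ell$ nonvanishing on the tangent direction $g_1$. Its restriction to $C_s$ is a uniformizer at $g_0$, and I will reparametrize so that the jet equals $\gamma$ with respect to it. To kill its other zeros, I will pass to the affine open $C_s\setminus\{\ell=0\}\cup\{P\}$, which I obtain by inverting a function $u$ that vanishes on the other zeros but not at $P$, uniformly in $s$. The genuine obstacle is this uniformizer step in families, together with ensuring that the jet of $\rho_s$ with respect to $t$ is exactly $\gamma$ rather than $\gamma$ reparametrized. If the degenerate case $g_1=0$ occurs, so that the jet is not an immersion, I would first handle it by mapping a smooth curve via a ramified parametrization. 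This means precomposing with $t\mapsto$ a suitable polynomial, or taking $C_s$ to be the normalization of a curve and defining $\rho$ as a non-embedding. I expect to try that route, with $C$ an abstract family mapping to $B$, first.
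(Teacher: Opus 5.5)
\textbf{There is a genuine gap: your construction cannot satisfy (b), and the obstruction is structural, not a matter of filling in the Bertini details.}

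In your set-up, $C$ is the universal curve $\{(s,c) : c\in C_s\}\subset S\times B$. Every $C_s$ is a subvariety of $B$ passing through $g_0$, and $P(s)=(s,g_0)$. Hence $\rho^{-1}(g_0)=P(S)$, so $g_0=\gamma(0)$ never lies in $\rho(C\setminus P)$, whatever uniformizer you choose.

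It gets worse with your particular $t$. It is a polynomial in the linear function $\ell$; the reparametrization is determined by $\gamma$ and $\ell$ alone, since all $C_s$ contain the same jet. So $t$ is pulled back from $B$. The requirement that $t$ have no zeros off $P$ then forces $\rho(C\setminus P)\subset B\setminus Z(t)$. The function $u$ you invert to kill the extra zeros of $t$ therefore deletes from the image every point of the hypersurface section $Z(t)\cap B$, which has dimension $\dim B-1$.

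So the step you flagged as the obstacle, removing the other zeros of $t$ uniformly in $s$, directly conflicts with (b) in an embedded-curve approach. Surjectivity over all of $B$, including $\gamma(0)$, is exactly what is used in the proof of Theorem~\ref{thm:extend}.

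\textbf{The missing idea.} The uniformizer must be a coordinate independent of the point of $B$, on an abstract curve that may pass through the same point of $B$ several times. The paper does this as follows.
\begin{itemize}
\item Take a finite map $\pi\colon B\to\bA^d$ that is \'etale at every point of $\pi^{-1}(\pi(g_0))$, and write $\pi\circ\gamma\equiv c(\tau)\bmod\tau^{m+1}$ with $c$ a polynomial.
\item Set $S=\bA^d$ and $C_0=(\bA^1\times S)\times_{\bA^d}B$ along $\sigma(\tau,v)=c(\tau)+\tau^{m+1}v$, and use $t=\tau$.
\item \'Etaleness at $g_0$ means the lift of $\sigma(\cdot,v)$ through $(g_0,0,v)$ has $m$-jet exactly $\gamma$. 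No reparametrization and no immersivity hypothesis are needed, so your case $g_1=0$ needs no separate treatment.
\item Flatness comes from miracle flatness of $\pi$, not from shrinking $S$.
\item The extra zeros of $\tau$ are the sections through the other points of $\pi^{-1}(\pi(g_0))$. They are removed by inverting $f(b)+\tau$, where $f(g_0)\neq0$ and $f$ vanishes at the other points.
\item Surjectivity survives: for any $b\in B$, including $g_0$, pick $\tau\neq 0$ with $f(b)+\tau\neq0$ and solve $\pi(b)=c(\tau)+\tau^{m+1}v$ for $v$. The curve over $v$ then passes through $b$ at a point where $\tau\neq0$.
\end{itemize}

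\textbf{A secondary gap.} Your Bertini-with-base-points and generic-flatness steps only produce good data for generic $(b,H_1,\dots,H_{k-1})$. Condition (b) needs good data over \emph{every} $b$, and shrinking $S$ to a dense open subset may lose points of $B$.
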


\begin{proof}
Let $b_1=\gamma(0)$ be the base point of the given jet and choose a finite
surjective map of varieties $\pi \colon B \to \bA^d$ such that $\pi$ is
\'etale at all points in $\pi^{-1}(\pi(b_1))$. This can be constructed as
follows. Set $d:=\dim(B)$ and assume that $B$ is a closed subvariety of
$\bA^k$ with $k$ minimal. If $k=d$, there is nothing to prove. Otherwise,
consider the incidence variety
\begin{displaymath}
Z:=\{(b,\pi) \mid b \in B \setminus\{b_1\}, 
\pi \colon \bA^k \to \bA^d \text{ linear, } \pi(b)=\pi(b_1), \text{ and }
d_b\pi|_{B} \text{ not invertible} \}.
\end{displaymath}
We claim that $\dim(Z)<k\cdot d$. Indeed, for $b \in B \setminus \{b_1\}$
sufficiently general, the vector $b-b_1 \in \bA^k$ is not in the tangent
space $T_b B$ (or else it would follow that $T_{b_1}B$ is the whole space,
which contradicts the smoothness of $B$). The fiber in $Z$ over such $b$
has codimension at least $d+1$: indeed, $\pi$ needs to have $b-b_1$ in its kernel
and these $d$ linear conditions are independent from the condition that
the restriction of $\pi$ to $T_b B$ has rank $<d$. Consequently, $Z$
has dimension $<kd$ and hence any sufficiently general $\pi\colon \bA^k \to
\bA^d$ has the property that $d_{\pi|_B}(b)$ is invertible for all $b
\in \pi^{-1}(\pi(b_1))$; here the condition for $b=b_1$ is just one more
open condition.

By ``miracle flatness'' \stacks{00R4}, $\pi$ is flat. For simplicity, we assume that  $\pi(b_1)=0$ and we write 
\begin{displaymath}
\pi(\gamma(t))=c_1(t)e_1 + \cdots + c_d(t)e_d \mod t^{m+1}
\end{displaymath}
where the $c_i$ are polynomials of degree $\leq m$ in $t$ with
$c_i(0)=0$. We further let $b_2, \ldots, b_n$ be the other points in $\pi^{-1}(0)$. Set $S:=\bA^d$. Define
\begin{displaymath}
\sigma \colon \bA^1 \times S \to \bA^d, \qquad
\sigma(\tau, v) = c_1(\tau) e_1 + \cdots + c_d(\tau) e_d + \tau^{m+1}
v.
\end{displaymath}
Note that for any fixed $K$-point $s$ of $S$, the $m$-jet defined by
$\sigma(.,s)$ at $0 \in \bA^1$ is $\pi \circ \gamma$. Let $C_0$ be the
fiber product of $\bA^1 \times S$ with $B$ over $\bA^d$, so we have
the following cartesian diagram:
\begin{displaymath}
\xymatrix{
C_0 \ar[r] \ar[d] & B \ar[d]^{\pi} \\
\bA^1 \times S \ar[r]_{\sigma} & \bA^d.
}
\end{displaymath}
Thus a $K$-point of
$C_0$ is a triple $(b, \tau, v)$ with $b \in B$ such
that 
\begin{equation} \label{eq:pisigmatau} 
\pi(b)=\sigma(\tau,0)+\tau^{m+1}v. 
\end{equation}
Note that $C_0$ is finite and flat over $\bA^1 \times S$. 

We have sections $P_i \colon S \to C_0$ by $P_i(v)=(b_i, 0, v)$ for
$1 \le i \le n$. Each of these is smooth since $\pi$ is \'etale at
$b_1,\ldots,b_n$. Specifically, fix a $K$-point $v$ of $S$. Then the
fiber $(C_0)_v$ is the preimage of $\bA^1 \times \{v\}$ in $C_0$, hence
a curve by finiteness of the map $C_0 \to \bA^1 \times S$. Furthermore, the tangent
space of $(C_0)_v$ at $P_i(v)$ consists of all triples $(x,\tau,0)
\in T_{b_i}B \times T_{0} \bA^1 \times T_v{\bA^{d}}$ such that
\begin{displaymath}
\pi(b_i+\epsilon x)=\sigma(0+\epsilon \tau,v) = \sigma(\epsilon
\tau,0) + (\epsilon \tau)^{m+1} v \mod \epsilon^2.
\end{displaymath}
The left-hand side equals $0+ \epsilon d_{\pi|_B}(b_i)x$, and since
$d_{\pi|_B}(b_i)$ is invertible, there is a unique solution $x$ for any
given $\tau$. So $T_{P_i(v)} (C_0)_v$ is one-dimensional as desired,
and it projects surjectively to $T_0 \bA^1$.

We claim that $C_0$ is
also irreducible. Indeed, its open subset where $\tau$ is nonzero is
irreducible, because because $B$ and $\bA^{1} \setminus \{0\}$ are
irreducible and one can solve \eqref{eq:pisigmatau} for $v$.
Furthermore, the locus $D$ in $C_0$ where $\tau=0$ equals $\bigcup_i
\im(P_i)$, and since $T_{P_i(v)} (C_0)_v$ maps surjectively to $T_0
\bA^1$, we find that $D$ does not contain a component of $C_0$.

Let $f \in K[B]$ be such that $f(b_i)=0$ for $i=2,\ldots,n$ and $f(b_1)
\neq 0$, and define $h \in K[C_0]$ by $h(b,\tau,v):=f(b)+\tau$.  We set
$C:=C_0[1/h]$. Then $C$ is irreducible, affine, and flat over $S$; and
$\tau$ is a global uniformizer on $C$ for the point $P:=P_1$. Let $\rho \colon C
\to B$ be the natural map. It remains to show that the restriction of
$\rho$ to $C \setminus \im(P)$ is surjective. To this end, let $b \in
B$ and choose $\tau \neq 0$ such that $f(b)+\tau \neq 0$. Then solve
\eqref{eq:pisigmatau} for $v$ and note that $h(b,\tau,v)$ is nonzero,
so that $(b,\tau,v) \in C \setminus \im(P)$.
\end{proof}

\subsection{Proof of Theorem~\ref{thm:extend}} \label{ss:construct}

We now prove the theorem. Fix the following data:
\begin{itemize}
\item $X$ is an irreducible affine $\GL$-variety
\item $\phi \colon B \times \bA^{\umu} \to X$ is dominant, with $B$ an irreducible variety
\item $x$ is a $K$-point of $X$.
\end{itemize}
We fix an embedding $X \subset \bA^{\ulambda}$.

Let $\phi' \colon B' \times \bA^{\umu} \to X$ be as in
Proposition~\ref{prop:mapping}. Thus $B'$ is an irreducible smooth
variety, and we have an expression $x=\lim_{t \to 0} \phi'(\gamma(t),
v(t))$ where $\gamma$ is a $K\lbb t \rbb$-point of $B'$ and $v$ is a
bounded $K\lpp t \rpp$-point of $\bA^{\umu}$. For notational
simplicity, we now forget our original $(B, \phi)$ and drop the primes
from $(B', \phi')$. Let $n \ge 0$ be such that $t^n v(t)$ has no
negative powers of $t$, and let $m \gg 0$.

Let $(C/S, P, t, \rho)$ be the data produced by Proposition~\ref{prop:jets} applied to the $m$-jet defined by $\gamma$. Thus:
\begin{itemize}
\item $S$ is an irreducible affine variety;
\item $C \to S$ is a family of curves over $S$ (see \S \ref{ss:not}) such that the total
space $C$ is affine and irreducible;
\item $P \colon S \to C$ is a smooth point of $C$;
\item $t \in K[C]$ is a global uniformizer at $P$ that is a unit on $C
\setminus P$;
\item $\rho \colon C \to B$ is a map of varieties such that for any $K$-point $s \in S$ the map
\begin{displaymath}
\xymatrix@C=4em{
\Spec(K\lbb t \rbb/(t^{m+1})) \ar[r] & C_s \ar[r]^{\rho_s} \ar[r] & B }
\end{displaymath}
coincides with $\gamma$ to order $m$; and
\item the restriction of $\rho$ to $C \setminus P$ is surjective onto $B$.
\end{itemize}
Applying Proposition~\ref{prop:glob-lim}, we have a map
\begin{displaymath}
\alpha \colon C \times \fL^{\gamma}_{n,m} \to X, \qquad
\alpha(c, y) = \phi(\rho(c), y(t)).
\end{displaymath}
We extend this to a map
\begin{displaymath}
\psi_1 \colon C \times \fL^{\gamma}_{n,m} \times \bA^{\umu} \to X, \qquad
\psi_1(c, y(t), u) = \phi(\rho(c), y(t)+t^{m+1}u);
\end{displaymath}
again, for $c \in C \setminus P$ the right-hand side is evaluated by
substituting $t(c)$ for $t$, and for $c \in P$ an appropriate limit is
taken. Since $m$ is sufficiently large, adding $t^{m+1}u$ to $y(t)$ does not
affect the existence or the value of this limit. We are now ready to prove the theorem:

\begin{proof}[Proof of Theorem~\ref{thm:extend}]
Recall that $x=\lim_{t \to 0} \phi(\gamma(t), v(t))$. Thus $v$ defines
a point of $\fL^{\gamma}_{n,m}$. Let $\fL'$ be an irreducible
component of $\fL^{\gamma}_{n,m}$ containing $v$, let $Y=C \times \fL'
\times \bA^{\umu}$, and let $\psi \colon Y \to X$ be the restriction
of $\psi_1$ to $Y$. Since $C$, $\fL'$, and $\bA^{\umu}$ are each
irreducible $\GL$-varieties, so is $Y$. We have $x=\psi(P(s), v, 0)$ for any $s \in S(K)$, and so $x \in \im(\psi)$.

To complete the proof, we must show $\im(\phi) \subset \im(\psi)$.
Thus let $y$ be a $K$-point of $\im(\phi)$. Write $y=\phi(b, w)$ for
some $b \in B$ and $w \in \bA^{\umu}$. Let $c \in C \setminus P$
satisfy $\rho(c)=b$. Set $u=t(c)^{-(m+1)} \cdot (w-v(t(c)))$.
Note that $v(t)+t^{m+1} \cdot u$ evaluated at $c$ is equal to $w$. We thus see
\begin{displaymath}
\psi(c, v, u) = \phi(\rho(c), w) = b,
\end{displaymath}
which completes the proof.
\end{proof}

\section{Enlarging the image more} \label{s:enlarge2}

The purpose of \S \ref{s:enlarge2} is to prove the following theorem:

\begin{theorem} \label{thm:extend2}
Let $X$ be an irreducible $\GL$-variety, let $\phi \colon B
\times \bA^{\umu} \to X$ be a dominant morphism, and let $x$
be a scheme-theoretic point of $X$. Then there exists an
irreducible closed $\GL$-subvariety $Y$ of $\bA^r \times
\bA^{s \cdot \umu}$, for some $r$ and $s$, and a morphism $\psi \colon Y \to X$ of $\GL$-varieties such that $\im(\phi) \cup \{x\} \subset \im(\psi)$.
\end{theorem}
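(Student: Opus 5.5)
The plan is to reduce to Theorem~\ref{thm:extend} by base change to a larger algebraically closed field.

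\emph{Step 1: realize $x$ as a geometric point.} The point $x$ has a residue field $\kappa(x)$, which need not be finitely generated over $K$. Let $\Omega$ be an algebraic closure of $\kappa(x)$. Then $x$ gives an $\Omega$-point $\bar x$ of $X_\Omega$. The base change $\phi_\Omega \colon B_\Omega \times \bA^{\umu}_\Omega \to X_\Omega$ is still dominant, and $X_\Omega$ is still irreducible, since $K$ is algebraically closed.

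\emph{Step 2: run the argument of \S\ref{s:enlarge} over $\Omega$.} The proof of Theorem~\ref{thm:extend} uses only the following ingredients: algebraic closedness and characteristic zero of the base field, \cite[Theorem~6.6]{imgclosure}, mapping spaces, resolution of singularities, Jouanolou's trick, Proposition~\ref{prop:jets}, and Proposition~\ref{prop:glob-lim}. All of these remain valid over $\Omega$. So we obtain an irreducible $Y_\Omega \subset \bA^r_\Omega \times \bA^{s\cdot\umu}_\Omega$ and a morphism $\psi_\Omega \colon Y_\Omega \to X_\Omega$ whose image contains $\im(\phi_\Omega)$ and $\bar x$.

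\emph{Step 3: descend to $K$.} This is the main obstacle. $Y_\Omega$ and $\psi_\Omega$ are defined by finitely many polynomials, because $\GL$-varieties are $\GL$-finitely generated (together with noetherianity \cite{draisma}). Hence they are defined over a finitely generated $K$-subalgebra $A \subset \Omega$. Put $T=\Spec A$, an irreducible variety over $K$. Spreading out gives a $\GL$-variety $\mathcal{Y} \subset T \times \bA^r \times \bA^{s\cdot\umu}$ and a morphism $\Psi \colon \mathcal{Y} \to X$, obtained by composing with the projection $X_T \to X$. Replace $\mathcal{Y}$ by the closure of its generic fiber. This is irreducible because the generic fiber $Y_\Omega$ is, and $T\times \bA^r$ embeds into some $\bA^{r'}$, so $\mathcal{Y}$ has the required shape.

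\emph{Step 4: check the two containments.} The point $\bar x$ lies in the image of the generic fiber, and the point of $\mathcal{Y}$ over it maps to $x \in X$; thus $x \in \im(\Psi)$. We may choose $A$ to contain the coordinates of the $\Omega$-point $\bar x$, so that it spreads out compatibly.

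For $\im(\phi)\subset\im(\Psi)$ it suffices to handle $K$-points $y$ of $\im(\phi)$, and then to pass to arbitrary scheme-theoretic points by the same trick, or via $\GL$-constructibility. Given such a $y$, the point $y_\Omega$ lies in $\im(\phi_\Omega)\subset\im(\psi_\Omega)$. So the fiber of $\mathcal{Y}\to X$ over $y$ meets the generic fiber over $T$, and is therefore nonempty.

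The delicate points are these. First, the containment $\im(\phi_\Omega)\subset \im(\psi_\Omega)$ is for $\Omega$-points, so it should also cover non-closed points after a further field extension. To arrange this, I would run Step 2 with $\Omega$ chosen of large transcendence degree (e.g.\ uncountable), so that every point of $X$ has a geometric point over $\Omega$. Second, irreducibility must survive the descent, which is secured by taking the closure of the generic fiber as in Step 3.
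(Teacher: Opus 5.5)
Your proposal is correct and follows essentially the same route as the paper. You base change to an algebraically closed $\Omega$ over which $x$ becomes a rational point, apply Theorem~\ref{thm:extend} over $\Omega$, and spread out over a finitely generated $K$-subalgebra of $\Omega$; this is the paper's Lemmas~\ref{lem:rational-1}--\ref{lem:rational-3}, where irreducibility comes from picking the irreducible component whose $\eta$-fiber is $Y'$, just as you close up the generic fiber. You then absorb the base into an affine space and chase points. Your enlargement of $\Omega$ to handle non-rational points of $\im(\phi)$ is a harmless extra precaution: the paper instead lifts an arbitrary point of $B \times \bA^{\umu}$ to $B_\Omega \times \bA^{\umu}_\Omega$ and uses $\im(\phi_\Omega) \subset \im(\psi')$ directly.
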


Theorem~\ref{thm:extend} and Theorem~\ref{thm:extend2} are nearly the same, but in the former $x$ is required to be a $K$-point, while in the latter it can be any scheme-theoretic point of $X$. This generality is crucially important in our application of the theorem. We deduce Theorem~\ref{thm:extend2} from Theorem~\ref{thm:extend} by working over larger fields. We start with a few lemmas.

\begin{lemma} \label{lem:rational-1}
Suppose we have the following:
\begin{itemize}
\item $\Omega$ is an algebraically closed field containing $K$
\item $Y'$ is a closed $\GL$-subvariety of $\bA^{\ulambda}_{\Omega}$ (over $\Omega$).
\end{itemize}
Then we can find:
\begin{itemize}
\item an irreducible affine variety $S$ (over $K$) equipped with a generic point $\eta \colon \Spec(\Omega) \to S$
\item a closed $\GL$-subvariety $Y$ of $S \times \bA^{\ulambda}$ (over $K$)
\end{itemize}
such that $Y_{\eta}=Y'$.
\end{lemma}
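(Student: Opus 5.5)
The plan is a standard spreading-out argument, with the $\GL$-finite generation of the defining ideal playing the role of the Hilbert basis theorem.

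\emph{Finitely many coefficients.} Let $I' \subset \Omega \otimes_K \Sym(\bV_{\ulambda})$ be the (radical, $\GL$-stable) ideal of $Y'$. By noetherianity of $\GL$-varieties \cite{draisma}, $I'$ is generated as a $\GL$-stable ideal by finitely many elements $f_1,\ldots,f_k$. We may take each $f_j$ to lie in a finite-dimensional subspace of $\Omega \otimes \Sym(\bV_{\ulambda}\{K^N\})$ for some fixed $N$. Each $f_j$ then involves only finitely many coefficients in $\Omega$. Let $A \subset \Omega$ be the finitely generated $K$-subalgebra generated by these coefficients, and set $S_0=\Spec(A)$. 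Since $A$ is a domain, $S_0$ is an irreducible affine variety over $K$, and the inclusion $A \subset \Omega$ gives a morphism $\eta \colon \Spec(\Omega) \to S_0$ whose image is the generic point of $S_0$.

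\emph{The family.} Consider the $\GL$-stable ideal $J \subset A \otimes \Sym(\bV_{\ulambda})$ generated by the $\GL$-orbits of $f_1,\ldots,f_k$, and let $Y_0 \subset S_0 \times \bA^{\ulambda}$ be the closed $\GL$-subscheme $V(J)$. By construction $J \otimes_A \Omega = I'$, since the tensor product is formed along $\eta$. Hence $(Y_0)_\eta = Y'$ scheme-theoretically. The remaining issue is that the lemma asks for a $\GL$-\emph{variety} $Y$, meaning a reduced scheme.

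\emph{Reducedness, the main obstacle.} The obvious move is to set $Y:=(Y_0)_{\mathrm{red}}$, defined by $\sqrt{J}$, and to show that taking radicals commutes with base change to the generic point. Geometric reducedness of $Y'$ does not spread out automatically in this infinite-type setting. I would handle it as follows.
\begin{itemize}
\item Since $\Omega \supset \Frac(A)$ is a field extension and the characteristic is zero, $Y'$ reduced implies that $(Y_0)_{\Frac(A)}$ is reduced. This holds because $\Frac(A) \otimes_A (\ldots)$ is a subring of $\Omega\otimes_A(\ldots)$, by flatness of $\Omega$ over $\Frac(A)$.
\item Localization is exact and nilpotents localize, so $\sqrt{J}\otimes_A \Frac(A) = J \otimes_A \Frac(A)$.
\item Base-changing to $\Omega$ then gives $\sqrt{J} \otimes_A \Omega = I'$, using flatness of $\Omega$ over $\Frac(A)$.
\end{itemize}
Hence $Y_\eta = Y'$.

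One point remains: $\Sym(\bV_{\ulambda})$ is not noetherian, so I will avoid any finite-type argument. The computation above is purely algebraic and uses only that $J\otimes_A \Frac(A) \to J \otimes_A \Omega$ is faithful, i.e., field extensions are faithfully flat. The radical $\sqrt{J}$ is automatically $\GL$-stable, so $Y$ is a closed $\GL$-subvariety of $S\times\bA^{\ulambda}$ with $S=S_0$. If one insists that $Y$ be flat over $S$, one can additionally shrink $S$ to a principal open affine. This is not required by the statement.
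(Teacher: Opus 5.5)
\textbf{There is a genuine gap, at your very first step.} You cite \cite{draisma} for the claim that $I'$ is generated as a $\GL$-stable ideal by finitely many elements. That theorem is \emph{topological} noetherianity: $\GL$-stable closed subsets satisfy the descending chain condition. Applied to the radical $\GL$-ideal $I'$, it only gives finitely many $f_1,\ldots,f_k$ such that $I'$ is the \emph{radical} of the $\GL$-ideal they generate. Finite $\GL$-generation of $I'$ itself would be ideal-theoretic noetherianity of $\Omega\otimes_K\Sym(\bV_{\ulambda})$, which is open in general; only some low-degree cases are known, cf.~\cite{sym2noeth}.

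So what you actually have is $\sqrt{J\Omega}=I'$, not $J\otimes_A\Omega=I'$. Thus $(Y_0)_\eta$ has the right points but may be non-reduced. Your reducedness argument then fails at its first bullet, which deduces reducedness of $(Y_0)_{\Frac(A)}$ from reducedness of $(Y_0)_\eta$ via the inclusion $\Frac(A)\otimes_A(\cdots)\subset\Omega\otimes_A(\cdots)$. Notice that this descent step uses no hypothesis on the characteristic. That suggests the argument runs in the wrong direction. If you were granted finite generation, your argument would in fact be correct, even without characteristic zero.

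\textbf{The fix, which is what the paper does, is \emph{ascent} of reducedness.} Write $R=\Sym(\bV_{\ulambda})$ and take $I:=\sqrt{J}\subseteq A\otimes_K R$ from the outset.
\begin{enumerate}
\item The ring $\Frac(A)\otimes_A (A\otimes_K R)/I$ is a localization of a reduced ring, hence reduced.
\item Since $\Frac(A)$ has characteristic zero, it is perfect. A reduced algebra over a perfect field stays reduced after extending scalars to any field $\Omega$. This holds even for algebras that are not finitely generated: reduce to finitely generated subalgebras and use flatness over a field. Hence $I\Omega$ is radical. This is the one place where characteristic zero is genuinely needed.
\item Since $J\subseteq I'$ and $I'$ is radical, $I\subseteq I'$, so $I\Omega\subseteq I'$.
\item Conversely, $I'=\sqrt{J\Omega}\subseteq I\Omega$, because $I\Omega$ is radical.
\end{enumerate}
Therefore $I\Omega=I'$, and $Y:=V(I)$ satisfies $Y_\eta=Y'$.
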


\begin{proof}
Let $R$ be the coordinate ring of $\bA^{\ulambda}$, so that $\Omega
\otimes_K R$ is the coordinate ring of $\bA^{\ulambda}_{\Omega}$. Let
$I' \subset \Omega \otimes_K R$ be the vanishing ideal of $Y'$. By
\cite{draisma}, there are finitely many elements $f_1, \ldots, f_r$ of
$\Omega \otimes_K R$ such that $I'$ is the radical of the $\GL$-ideal
generated by the $f_i$'s. Let $A$ be a finitely generated $K$-subalgebra
of $\Omega$ such that each $f_i$ belongs to $A \otimes_K R$, and let $I
\subset A \otimes_K R$ be the radical of the $\GL$-ideal generated by the
$f_i$'s. Note that $I'$ is the extension of $I$; the key point is that
the extension of $I$ is radical. (It is clear that the
extension of $I$ to $\Frac(A) \otimes_K R$ is radical, and for
the further extension from $\Frac(A) \otimes_K R$ to $\Omega
\otimes_K R$ one uses that the characteristic is zero.) Let
$S=\Spec(A)$ and let $Y \subset S \times \bA^{\ulambda}$ be the vanishing
locus of $I$. The inclusion $A \subset \Omega$ corresponds to a generic
point $\eta \colon \Spec(\Omega) \to S$. We have $Y_{\eta}=Y'$ since $I'$
is the extension of $I$.
\end{proof}

\begin{lemma} \label{lem:rational-2}
Suppose we have the following:
\begin{itemize}
\item $\Omega$ is an algebraically closed field containing $K$
\item $S$ is an irreducible affine variety over $K$ equipped with a generic point $\eta \colon \Spec(\Omega) \to S$
\item $X$ and $Y$ are affine $\GL$-varieties over $S$.
\item $\alpha \colon Y_{\eta} \to X_{\eta}$ is a map of $\GL$-varieties over $\Omega$.
\end{itemize}
Then we can find:
\begin{itemize}
\item an irreducible affine variety $S'$ equipped with a map $S' \to S$ and a generic point $\eta' \colon \Spec(\Omega) \to S'$ lifting $\eta$
\item a map $\beta \colon Y_{S'} \to X_{S'}$ of $\GL$-schemes over $S'$
\end{itemize}
such that the fiber of $\beta$ over $\eta'$ is $\alpha$.
\end{lemma}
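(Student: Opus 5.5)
We reduce to a statement about finitely many coefficients and then spread them out over a finitely generated $K$-algebra. Write $S=\Spec(A)$, where $A$ is a finitely generated $K$-domain, and regard $A\subset \Omega$ via $\eta$. Write $K[X]$ and $K[Y]$ for the coordinate rings over $A$. They are $\GL$-algebras over $A$ and are $\GL$-finitely generated as $A$-algebras. Then $\Omega\otimes_A K[X]$ and $\Omega\otimes_A K[Y]$ are the coordinate rings of the generic fibres $X_\eta$ and $Y_\eta$. The map $\alpha$ corresponds to a $\GL$-equivariant $\Omega$-algebra homomorphism
\[ \alpha^* \colon \Omega\otimes_A K[X] \to \Omega\otimes_A K[Y]. \]

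First choose finitely many elements $g_1,\ldots,g_k \in K[X]$ whose $\GL$-orbits generate $K[X]$ as an $A$-algebra. Each $\alpha^*(1\otimes g_i)$ is a finite sum $\sum_j \omega_{ij}\otimes h_{ij}$ with $\omega_{ij}\in\Omega$ and $h_{ij}\in K[Y]$. Let $A_1\subset\Omega$ be the $A$-subalgebra generated by all the $\omega_{ij}$. On the $\GL$-orbits of the $g_i$ we then get a candidate $A_1$-linear map $A_1\otimes_A K[X]\to A_1\otimes_A K[Y]$, defined by $\GL$-equivariance and multiplicativity. It agrees with $\alpha^*$ after tensoring with $\Omega$.

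The issue is well-definedness: $K[X]$ is a quotient of a polynomial $\GL$-algebra $A\otimes\Sym(\bV_{\ulambda})$ by a $\GL$-ideal $I$. Sending the generators $g_i$ (that is, the basis elements of the $\bV_{\lambda_i}$) to the chosen elements defines an equivariant homomorphism
\[ \beta_1^*\colon A_1\otimes\Sym(\bV_{\ulambda}) \to A_1\otimes_A K[Y]. \]
This homomorphism is equivariant because a $\GL$-map out of $\bV_\lambda$ is determined by the image of a highest weight vector. We need $\beta_1^*$ to kill $I$. By \cite{draisma}, or more precisely by Noetherianity, $I$ is generated up to radical by finitely many $f_1,\ldots,f_r$. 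Reducedness is not automatic here; this is the step I expect to be the main obstacle. Each $\beta_1^*(f_l)$ becomes nilpotent after tensoring with $\Omega$, since it maps to $\alpha^*$ of zero. Raising to a suitable power $N$, the element $\beta_1^*(f_l)^N$ is a nonzero-free element of $A_1\otimes_A K[Y]$ whose image in $\Omega\otimes_A K[Y]$ vanishes. Such an element is annihilated by some nonzero $a\in A_1$, because the kernel of $A_1\otimes_A M\to \Omega\otimes_A M$ is torsion: $\Omega$ is a filtered colimit of localizations and flat extensions of $A_1$, via the fraction field $\Frac(A_1)$ and then the field extension $\Omega/\Frac(A_1)$, which is flat.

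So, after inverting finitely many nonzero elements of $A_1$, giving $A_2=A_1[1/a]$, the $f_l$ map to nilpotents. The $\GL$-ideal they generate, together with its radical, therefore maps into the nilradical. Finally, $X_{S'}$ and $Y_{S'}$ are reduced: they are defined as the base change, taken reduced if necessary. Over the generic fibre reducedness is preserved in characteristic zero, as in the proof of Lemma~\ref{lem:rational-1}, and after shrinking $S'$ further the base change is reduced. Hence $\beta_1^*$ descends to a map $K[X_{S'}]\to K[Y_{S'}]$.

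Take $S'=\Spec(A_2)$, with $\eta'$ given by the inclusion $A_2\subset \Omega$. Then $\eta'$ is a generic point lifting $\eta$, $S'$ is irreducible because $A_2$ is a domain, and $\beta$ restricts to $\alpha$ over $\eta'$ by construction, since two equivariant maps agreeing on the generators agree.
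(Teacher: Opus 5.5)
The gap is in your final descent step. What your argument actually establishes is this: after inverting $a$, the map $\beta_1^*$ kills the $\GL$-ideal $J$ generated by $f_1,\ldots,f_r$, and hence sends $A_2 I\subseteq\sqrt{A_2J}$ into the nilradical of $A_2\otimes_A K[Y]$. To get $\beta\colon Y_{S'}\to X_{S'}$ you need $\beta_1^*(A_2I)=0$. You obtain this only by asserting that, after shrinking $S'$, the ring $A_2\otimes_A K[Y]$ becomes reduced, and nothing supports that. The characteristic-zero argument of Lemma~\ref{lem:rational-1} concerns the field extension $\Frac(A)\subseteq\Omega$, i.e.\ reducedness of the generic fibre. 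Spreading reducedness from the generic fibre to an open subset of $S'$ normally needs generic flatness and finite type, and you have neither: $K[Y]$ is not finitely generated over $A$, and base change along $A\to A_2$, which need not be flat, can create torsion and nilpotents. Declaring $X_{S'}$ and $Y_{S'}$ to be the \emph{reduced} base changes changes the statement, since the lemma is about the honest base changes, which the paper explicitly allows to be non-reduced.

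So what you have rigorously shown is the existence of $\beta\colon (Y_{S'})_{\mathrm{red}}\to X_{S'}$ whose fibre over $\eta'$ is $\alpha$. The fibre is correct because the nilradical of $A_2\otimes_A K[Y]$ dies in the reduced ring $\Omega\otimes_A K[Y]$. This weaker form is all that Lemma~\ref{lem:rational-3} uses, since there one immediately passes to the reduced subscheme $Y_1$, but it is not the lemma as stated. A smaller point of the same nature: uniqueness of equivariant maps out of $\bV_{\lambda_i}$ does not give existence. You need each lift $\sum_j\omega_{ij}\otimes h_{ij}$ to be a highest weight vector of weight $\lambda_i$ in $A_1\otimes_A K[Y]$. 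You can arrange this by replacing each $h_{ij}$ by its image under the $A$-linear projection onto the $\lambda_i$-highest-weight part. (Also, $\beta_1^*(f_l)$ maps to $0$ in $\Omega\otimes_A K[Y]$, not merely to a nilpotent.)

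By contrast, the paper checks no relations at all. It takes a finitely generated $A'\subseteq\Omega$ containing the coefficients of $\alpha^*$ on finitely many $\GL$-generators of $K[X]$, so that $\alpha^*(K[X])\subseteq A'\otimes_A K[Y]$, and then simply restricts $\alpha^*$. The result is automatically a well-defined homomorphism on the full, possibly non-reduced, base change $A'\otimes_A K[X]$. That restriction tacitly treats $A'\otimes_A K[Y]$ as a subring of $\Omega\otimes_A K[Y]$, i.e.\ uses the absence of $A'$-torsion, and your well-definedness concern is exactly this point. Your torsion-killing device handles it for only finitely many elements. That is why it controls $I$ only up to radical and lands you modulo nilpotents. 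To reach the literal conclusion you would need an extra input guaranteeing that $A'\otimes_A K[Y]$ is torsion-free (for instance a generic flatness statement for $K[Y]$ over $A$), and once you have that, the paper's one-line restriction already suffices.
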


\begin{proof}
Let $A=K[S]$ be the coordinate ring of $S$. The point $\eta$ corresponds to an injection of $K$-algebras $A \to \Omega$. The coordinate rings $K[X]$ and $K[Y]$ of $X$ and $Y$ are naturally $A$-algebras. The given map $\alpha$ corresponds to a homomorphism
\begin{displaymath}
\alpha^* \colon \Omega \otimes_A K[X] \to \Omega \otimes_A K[Y]
\end{displaymath}
of $\Omega$-algebras. Let $A'$ be a finitely generated $A$-subalgebra of $\Omega$ such that $\alpha^*(K[X])$ is contained in $A' \otimes_A K[Y]$. This exists since $K[X]$ is finitely $\GL$-generated. Then $\alpha^*$ restricts to a homomorphism of $A'$-algebras
\begin{displaymath}
\beta^* \colon A' \otimes_A K[X] \to A' \otimes_A K[Y].
\end{displaymath}
We take $S'=\Spec(A')$ and let $\beta \colon Y_{S'} \to X_{S'}$ be the map induced by $\beta^*$. Of course, $\eta'$ corresponds to the inclusion $A' \to \Omega$. (Note that $Y_{S'}$ and $X_{S'}$ may not be reduced, which is why we say $\GL$-scheme instead of $\GL$-variety.)
\end{proof}

\begin{lemma} \label{lem:rational-3}
Suppose we have the following:
\begin{itemize}
\item $\Omega$ is an algebraically closed field containing $K$
\item $X$ is an irreducible affine $\GL$-variety (over $K$)
\item $Y'$ is an irreducible closed $\GL$-subvariety of $\bA^{\ulambda}_{\Omega}$ (over $\Omega$)
\item $\alpha \colon Y' \to X_{\Omega}$ is a map of $\GL$-varieties (over $\Omega$).
\end{itemize}
Then we can find
\begin{itemize}
\item an irreducible affine variety $S$ over $K$ equipped with a generic point $\eta \colon \Spec(\Omega) \to S$
\item an irreducible closed $\GL$-subvariety $Y$ of $S \times \bA^{\ulambda}$ (over $K$) with $Y_{\eta}=Y'$
\item a map $\gamma \colon Y \to X$ of $\GL$-varieties (over $K$)
\end{itemize}
such that $\alpha$ is the fiber over $\eta$ of the map $\gamma \times \pi \colon Y \to X \times S$, where $\pi \colon Y \to S$ is the projection.
\end{lemma}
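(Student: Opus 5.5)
The plan is to combine Lemma~\ref{lem:rational-1} and Lemma~\ref{lem:rational-2}, and then pass to an irreducible component to restore irreducibility and reducedness.

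First, apply Lemma~\ref{lem:rational-1} to $Y' \subset \bA^{\ulambda}_{\Omega}$. This gives an irreducible affine $S_0$ with generic point $\eta_0 \colon \Spec(\Omega) \to S_0$, and a closed $\GL$-subvariety $Y_0 \subset S_0 \times \bA^{\ulambda}$ with $(Y_0)_{\eta_0}=Y'$. Regard $X \times S_0$ as an affine $\GL$-variety over $S_0$; its fiber over $\eta_0$ is $X_{\Omega}$, so $\alpha$ is a map $(Y_0)_{\eta_0} \to (X\times S_0)_{\eta_0}$.

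Next, apply Lemma~\ref{lem:rational-2}. This gives $S_1 \to S_0$ with $S_1$ irreducible affine and a generic point $\eta$ lifting $\eta_0$, together with a map $\beta \colon (Y_0)_{S_1} \to X \times S_1$ over $S_1$ whose fiber over $\eta$ is $\alpha$. Set $S:=S_1$ and $Y_1:=(Y_0)_{S_1} \subset S \times \bA^{\ulambda}$, which is possibly nonreduced and reducible. Its generic fiber is $Y_1 \times_S \Spec(\Omega) = Y'\otimes_{\Omega}\Omega = Y'$, because $\eta$ lifts $\eta_0$ and $\Omega$ is the same field.

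It remains to extract a component. Let $Y$ be the reduced closure in $Y_1$ of the image of $Y'=(Y_1)_\eta \to Y_1$. This is the scheme-theoretic closure, with ideal equal to the kernel of $K[Y_1]\to \Omega\otimes_{K[S]}K[Y_1]=\Omega\otimes_K\ldots$, and it is $\GL$-stable because that map is $\GL$-equivariant. Because $Y'$ is reduced and irreducible, this kernel is a $\GL$-stable prime ideal: $\Omega$-points and the generic point of $Y'$ map to a single irreducible closed set. So $Y$ is an irreducible, reduced, closed $\GL$-subvariety of $S\times\bA^{\ulambda}$.

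The main obstacle is checking that $Y_\eta=Y'$ still holds. The generic fiber of a closure is the closure of the generic fiber, since localization commutes with taking kernels. More precisely, $K[Y]\to \Omega\otimes_{K[S]} K[Y]$ factors through the localization at $K[S]\setminus 0$, and the kernel of $\Frac(K[S])\otimes K[Y_1] \to \Omega\otimes K[Y_1]$ is zero by flatness of field extensions. Hence $\Omega\otimes_{K[S]}K[Y]=\Omega\otimes_{K[S]}K[Y_1]$, which is the coordinate ring of $Y'$.

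Finally, define $\gamma$ as the composite $Y \hookrightarrow Y_1 \xrightarrow{\beta} X\times S_1 \to X$. Then $\gamma\times\pi$ is the restriction of $\beta$, whose $\eta$-fiber is $\alpha$. The map lands in the reduced $X\times S$ because $Y$ is reduced.
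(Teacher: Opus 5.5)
Your proof is correct, and its first two steps (Lemma~\ref{lem:rational-1} to spread out $Y'$, then Lemma~\ref{lem:rational-2} to spread out $\alpha$) are exactly the paper's. The difference is in how you cut $(Y_0)_S$ down to an irreducible $\GL$-variety whose generic fiber is $Y'$.

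The paper first passes to the reduced subscheme $Y_1$ of $(Y_0)_S$. It then decomposes $Y_1$ into irreducible components $Y_1^1,\dots,Y_1^r$ and uses irreducibility of $Y'$ to pick one with $(Y_1^i)_\eta=Y'$. This tacitly uses that a $\GL$-variety has finitely many irreducible components, each $\GL$-stable (noetherianity). It also needs a reducedness argument to upgrade the set-theoretic equality of fibers to an equality of schemes.

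You instead take the scheme-theoretic closure of the generic fiber, i.e.\ the kernel $I$ of the $\GL$-equivariant map $K[Y_1]\to\Omega\otimes_{K[S]}K[Y_1]=K[Y']$. Since $K[Y']$ is a domain, $I$ is automatically a $\GL$-stable prime. So you need neither a component decomposition nor a preliminary reduction of $(Y_0)_S$. (The domain property is the clean justification for primality; your remark about points mapping to an irreducible set is looser.)

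Your verification of $Y_\eta=Y'$ is fine, but the detour through $\Frac(K[S])$ and flatness is unnecessary. Since $I$ maps to zero in $\Omega\otimes_{K[S]}K[Y_1]$, right exactness of $\Omega\otimes_{K[S]}-$ gives $\Omega\otimes_{K[S]}(K[Y_1]/I)=\Omega\otimes_{K[S]}K[Y_1]$ directly.

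So your route is slightly more self-contained and canonical, while the paper's is more geometric and relies on standard structural facts about $\GL$-varieties.
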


\begin{proof}
By Lemma~\ref{lem:rational-1}, we can find:
\begin{itemize}
\item an irreducible affine variety $S_0$ over $K$ with a generic point $\eta_0 \colon \Spec(\Omega) \to S_0$
\item a closed $\GL$-subvariety $Y_0$ of $S_0 \times \bA^{\ulambda}$
\end{itemize}
such that $(Y_0)_{\eta}=Y'$. Let $X_0=X \times S_0$. Then $Y_0$ and $X_0$ are $\GL$-varieties over $S_0$, and $\alpha$ defines a map of their fibers over $\eta_0$. By Lemma~\ref{lem:rational-2}, we can find:
\begin{itemize}
\item an irreducible affine variety $S$ over $K$ equipped with a map $S \to S_0$ and a generic point $\eta \colon \Spec(\Omega) \to S$ lifting $\eta_0$
\item a map $\beta \colon (Y_0)_S \to (X_0)_S$ of $\GL$-schemes over $S$
\end{itemize}
such that $\alpha$ is the fiber of $\beta$ over $\eta$. Let $Y_1$ be the reduced subscheme of $(Y_0)_S$; this is a $\GL$-variety, and $(Y_1)_{\eta}=Y'$ since $Y'$ is reduced. Note that $(X_0)_S = X \times S$, and so $\beta \vert_{Y_1}$ has the form $\gamma_1 \times \pi_1$, where $\gamma_1 \colon Y_1 \to X$ is a map and $\pi_1 \colon Y_1 \to S$ is the projection. Let $Y_1^1, \ldots, Y_1^r$ be the irreducible components of $Y_1$. Since $Y'=(Y_1^1)_{\eta} \cup \cdots \cup (Y_1^r)_{\eta}$ and $Y'$ is irreducible, we must have $Y'=(Y_1^i)_{\eta}$ for some $i$. We can thus take $Y=Y_1^i$, and $\gamma$ to be the restriction of $\gamma_1$ to $Y$.
\end{proof}

\begin{proof}[Proof of Theorem~\ref{thm:extend2}]
Let $\phi \colon B \times \bA^{\umu} \to X$ and $x \in X$ be given as in Theorem~\ref{thm:extend2}. Let $\Omega$ be an algebraically closed field containing $K$ such that $x$ is defined over $\Omega$. Consider the base change of $\phi$ to $\Omega$:
\begin{displaymath}
\phi_{\Omega} \colon B_{\Omega} \times \bA^{\umu}_{\Omega} \to X_{\Omega}.
\end{displaymath}
Let $x'$ be an $\Omega$-point of $X_{\Omega}$ that maps to $x$. By Theorem~\ref{thm:extend} (applied with $\Omega$ as the base field), we can find an irreducible closed $\GL$-subvariety $Y'$ of $\bA^r_{\Omega} \times \bA^{s \cdot \umu}_{\Omega}$ and a morphism $\psi' \colon Y' \to X_{\Omega}$ such that $\im(\phi_{\Omega}) \cup \{x'\} \subset \im(\psi')$.

Applying Lemma~\ref{lem:rational-3}, we can find:
\begin{itemize}
\item an irreducible affine variety $S$ over $K$ equipped with a generic point $\eta \colon \Spec(\Omega) \to S$.
\item an irreducible closed $\GL$-subvariety $Y$ of $S
\times \bA^r \times \bA^{s \cdot \umu}$ (over $K$) with $Y_{\eta}=Y'$
\item a map $\psi \colon Y \to X$ of $\GL$-varieties (over $K$)
\end{itemize}
such that $\psi'$ is the fiber over $\eta$ of $\psi \times \pi$. Note that we can embed $S$ into $\bA^m$ for some $m$, which realizes $Y$ as a closed subvariety of $\bA^{r+m} \times \bA^{s \cdot \umu}$.

Consider the following diagram
\begin{displaymath}
\xymatrix@C=4em{
Y_{\Omega} \ar[r] \ar[d]_{\psi'} & Y \ar[d]^{\psi \times \pi} \\
X_{\Omega} \ar[r]^{\id \times \eta} & X \times S \ar[r] & X }
\end{displaymath}
Here the middle column is a map of $\GL$-varieties over $S$, and the left column is its fiber over the point $\eta$; the bottom right map is the projection onto the first factor. Now, $x' \in X_{\Omega}$ belongs to the image of $\psi'$ and maps to $x$ in $X$. Chasing the diagram, we see that $x \in \im(\psi)$. Now suppose that $z \in \im(\phi)$. Write $z=\phi(y)$ with $y \in B \times \bA^{\umu}$, and let $y'$ be a pre-image of $y$ in $B_{\Omega} \times \bA^{\umu}_{\Omega}$. Then $z'=\phi_{\Omega}(y')$ is a pre-image of $z$ and belongs to $\im(\phi_{\Omega})$, and therefore to $\im(\psi')$. Chasing the diagram again, we see that $z \in \im(\psi)$. This completes the proof.
\end{proof}

\section{Pulling off the top piece} \label{s:top}

\subsection{The main result} \label{ss:top}

For a tuple $\ulambda$, let $\Vert \ulambda \Vert$ be the maximum size of a partition in $\ulambda$. Let $X$ be a $\GL$-variety. We define the \defn{embedding degree} of $X$, denoted $\ed(X)$, to be the minimal $d$ for which there is a closed immersion $X \to \bA^{\ulambda}$ for some tuple $\ulambda$ with $\Vert \ulambda \Vert=d$. We define the \defn{unirational degree} of $X$, denoted $\ud(X)$, to be the minimal $d$ for which there is a dominant morphism $B \times \bA^{\ulambda} \to X$ for some tuple $\ulambda$ with $\Vert \ulambda \Vert=d$ and variety $B$. The unirationality theorem ensures that $\ud(X) \le \ed(X)$.

The following theorem is the main result of \S \ref{s:top}:

\begin{theorem} \label{thm:small}
Let $X$ be an irreducible affine $\GL$-variety with $\ed(X)=d$. Then we
can find a surjective map of $\GL$-varieties $Y \times \bA^{\ul{\tau}} \to
X$ where $Y$ is an irreducible affine $\GL$-variety with $\ud(Y)<d$ and
$\ed(Y) \le d$ and every partition appearing in $\ul{\tau}$ has size $d$.
\end{theorem}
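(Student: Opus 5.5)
Fix a closed embedding $X \subset \bA^{\ulambda}$ with $\Vert \ulambda \Vert = d$, and write $\ulambda = [\ulambda', \ul{\tau}]$, where $\ul{\tau}$ collects the partitions of size exactly $d$ and $\Vert \ulambda' \Vert < d$. The plan is to split the coordinates on $\bA^{\ul{\tau}}$ into a part that $X$ genuinely constrains and a free part. By the unirationality theorem there is a dominant map $\phi \colon B \times \bA^{\umu} \to X$ with $\Vert \umu \Vert \le d$; write $\umu = [\umu', \ul{\sigma}]$ with $\ul{\sigma}$ the size-$d$ partitions. The key observation, recalled in \S\ref{ss:sketch}, is that equivariant maps between representations of degree $d$ are linear. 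Since $\bA^{\ul{\sigma}}$ has coordinates of central degree $d$, the $\ulambda'$-component of $\phi$ cannot depend on the $\ul{\sigma}$-variables, because those coordinates have degree $<d$. The $\ul{\tau}$-component has the form $\phi_\tau(b,u',w) = F(b,u') + L_b(w)$, where $L_b \colon \bA^{\ul{\sigma}} \to \bA^{\ul{\tau}}$ is a linear equivariant map depending on $b$. Such linear maps form a finite dimensional space, namely matrices between the multiplicity spaces of each size-$d$ partition.

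For generic $b$, the image of $L_b$ is a sub-representation $W \subset \bA^{\ul{\tau}}$ of generic, hence maximal, rank. I would choose an equivariant linear complement and a projection. After shrinking $B$ to the open set where $L_b$ has maximal rank, I would change coordinates on $\bA^{\ul{\tau}}$ by a $\GL$-linear automorphism. This can be done uniformly, because we only need a fixed decomposition $\bA^{\ul{\tau}} = W' \oplus W''$ with $W''$ complementary to $\im L_b$ for $b$ in a dense open. The goal is to show that $X$ is stable under translation by the image $W$ of the $L_b$'s. One direction is clear: $\phi$ is dominant and $\phi(b,u',w+w_0)=\phi(b,u',w)+L_b(w_0)$, so the closure $X$ is stable under adding $\im L_b$ for generic $b$. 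Those images sum to a fixed representation $W$, since each size-$d$ isotypic piece of $\im L_b$ is generically constant up to sums. Translation invariance is closed, so $X + W = X$. Then $X = Y \times W$, where $Y := X \cap (\bA^{\ulambda'} \times W'')$ for a complement $W''$ of $W$. This requires checking that the projection is compatible, which holds because translation by $W$ is an action. Taking $\ul{\tau}$ to index $W$, the map $Y \times \bA^{\ul{\tau}} \to X$, $(y,w) \mapsto y + w$, is an isomorphism and in particular surjective.

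Clearly $Y$ is irreducible, since it is a retract of $X$, and $\ed(Y) \le d$. It remains to see $\ud(Y) < d$. Composing $\phi$ with the projection $X \to Y$ along $W$ gives a dominant map $B \times \bA^{\umu} \to Y$. Its component in $W''$ is the $W''$-projection of $F(b,u') + L_b(w)$. The subtlety, and in my view the main obstacle, is to ensure that this projection kills the $w$-dependence entirely, i.e.\ $\im L_b \subseteq W$ for all $b$ and not just generically. I would handle this by defining $W$ as the span of $\im L_b$ over all $b \in B$. This is still a finite-dimensional choice of multiplicities, and the translation-invariance argument applies to each $b$ separately, since $\phi(b,\cdot)$ already lands in $X$. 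Hence $X + \im L_b \subseteq X$ for every $b$ and thus $X + W = X$. Then the induced map factors through $B \times \bA^{\umu'}$ with $\Vert \umu' \Vert < d$, proving $\ud(Y)<d$.
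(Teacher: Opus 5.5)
\textbf{There is a genuine gap: the translation-invariance step $X+\im L_b\subseteq X$ is false.} The identity $\phi(b,u',w+w_0)=\phi(b,u',w)+L_b(w_0)$ only shows that the fiber image $\phi(\{b\}\times\bA^{\umu})$ is stable under translation by $\im L_b$. A point of $X$ coming from a different $b'$ need not stay in $X$ after adding $L_b(w_0)$, because the subspace $\im L_b$ genuinely moves with $b$.

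Here is a concrete counterexample. Take $d=1$ and let $X\subset K^2\otimes\bA^{(1)}$ be the locus of pairs $(v_1,v_2)$ spanning a space of dimension $\le 1$. It is irreducible with $\ed(X)=1$, and it is the image of $\phi\colon K^2\times\bA^{(1)}\to K^2\otimes\bA^{(1)}$, $(a,v)\mapsto a\otimes v$. Here $\im L_a=\langle a\rangle\otimes\bA^{(1)}$. These subspaces span everything, so your $W$ would be all of $K^2\otimes\bA^{(1)}$ and $X+W=X$ would force $X$ to be everything. In fact no nonzero subrepresentation $U\otimes\bA^{(1)}$ with $U\subseteq K^2$ preserves $X$: adding $u\otimes x$ to $u'\otimes y$, with $u,u'$ independent and $x,y$ independent, leaves $X$. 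The generic-$b$ version of your claim fails here as well. So $X$ admits no splitting $X\cong Y\times W$ with $W$ a fixed linear factor of $\bA^{\ulambda}$. Your argument also obtains surjectivity, as opposed to dominance, only through this false claim.

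\textbf{What is missing is a mechanism that lets the subspace vary with $b$ while still producing a surjection.} The paper does this as follows.
\begin{enumerate}
\item Shrink $B$ to where each $\lambda^i(b)$ (your $L_b$) has constant rank $d_i$, and record $\rho(b)=(\im\lambda^i(b))_i$ in the Grassmannian $C=\prod_i\Gr_{d_i}(V_i)$.
\item Work inside $C\times\bA^{\ulambda}$. Let $W$ be the closure of $\{(\rho(b),\phi(b,x,0))\}$, and let $W^+$ be the closure of $W+\prod_i\cR_i\otimes\bA^{\beta_i}$, where the $\cR_i$ are the tautological bundles.
\item Then $\im\phi$ is the projection of a dense subset of $W^+$. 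Properness of $C$ upgrades dominance to surjectivity of $W^+\to X$.
\item Base change along a frame-bundle-plus-Jouanolou map $C'\to C$ makes $C$ affine and each $\cR_i$ trivial with a complement $\cS_i$.
\item Set $Y:=W^+\cap\bigl(\bA^{\ul{\alpha}}\times\prod_i\cS_i\otimes\bA^{\beta_i}\bigr)$.
\end{enumerate}
This $Y$ carries the finite-dimensional parameter space $C$, so it is not a retract or subvariety of $X$. In the example above, $Y$ is essentially an affine cover of $\Gr_1(K^2)$, and $Y\times\bA^{(1)}\to X$ sends $(\ell,v)$ to $\ell\otimes v$.

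Your decomposition $\phi_\tau=F(b,u')+L_b(w)$ matches the paper's Lemma~\ref{lem:struct-1} and is fine. The proof breaks only at the passage from this fiberwise linearity to a global product structure on $X$.
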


Let us explain the significance of this theorem. The main result of this paper is the strong unirationality theorem (Theorem~\ref{thm:stronguni}). The core argument in the proof of that theorem applies when we have a strict inequality $\ud<\ed$; see \S \ref{ss:sketch}(b). If we start with an arbitrary $\GL$-variety $X$, we will appeal to the above theorem and then apply the core argument to $Y$.

\subsection{A structural result} \label{ss:struct}

Fix, for the duration of \S \ref{s:top}, a tuple $\ulambda$ with $\Vert \ulambda \Vert=d$. Write
\begin{displaymath}
\bA^{\ulambda} = \bA^{\ul{\alpha}} \times \prod_{i=1}^r (V_i \otimes \bA^{\beta_i})
\end{displaymath}
where $\Vert \ul{\alpha} \Vert<d$, each $\beta_i$ has size $d$, the
$\beta_i$ are distinct, and $V_i$ is a finite dimensional vector space. Note that $V_i \otimes \bA^{\beta_i}$ is simply a product of $\dim{V_i}$ copies of $\bA^{\beta_i}$. We now show that a $\GL$-subvariety of $\bA^{\ulambda}$ is, in a sense, built out of two pieces: one with $\ud<d$, and one that is linear. These two pieces will eventually yield the $Y$ and $\bA^{\ul{\tau}}$ in Theorem~\ref{thm:small}. 

\begin{proposition} \label{prop:struct}
Let $X$ be an irreducible closed $\GL$-subvariety of $\bA^{\ulambda}$. There exists:
\begin{itemize}
\item an irreducible smooth projective variety $C$,
\item a vector subbundle $\cR_i$ of $C \times V_i$ for each $1 \le i \le r$, and
\item an irreducible closed $\GL$-subvariety $W$ of $C \times \bA^{\ulambda}$ with $\ud(W)<d$
\end{itemize}
such that, letting $W^+ \subset C \times \bA^{\ulambda}$ be the Zariski closure of $W + \prod_{i=1}^r (\cR_i \otimes \bA^{\beta_i})$, the projection map $C \times \bA^{\ulambda} \to \bA^{\ulambda}$ restricts to a surjection $W^+ \to X$.
\end{proposition}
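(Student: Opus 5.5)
The plan is to start from the unirationality theorem of \cite{polygeom}, which gives a dominant morphism $\phi \colon B \times \bA^{\umu} \to X$ with $B$ irreducible and $\Vert \umu \Vert \le d$. We then exploit the rigidity of maps between degree-$d$ pieces, as indicated in \S\ref{ss:sketch}(c).

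\emph{Splitting off the linear part.} Write $\bA^{\umu} = \bA^{\umu'} \times \prod_j (U_j \otimes \bA^{\gamma_j})$, where $\Vert \umu' \Vert < d$ and each $\gamma_j$ has size $d$. Consider the central grading.
\begin{itemize}
\item The coordinates of the $\bA^{\ul{\alpha}}$-component of $\phi$ have degree $<d$. They therefore cannot involve the degree-$d$ variables, so that component depends only on $(b,u) \in B \times \bA^{\umu'}$.
\item A coordinate of a $V_i \otimes \bA^{\beta_i}$-component has degree $d$. It is a polynomial in $b$ and $u$, plus a part linear in the degree-$d$ variables.
\item By Schur's lemma, this linear part only sees the factors with $\gamma_j=\beta_i$, and it is given by a linear map $L_i(b) \colon U_i \to V_i$ depending regularly on $b$. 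Here we group these factors into a single $U_i$.
\end{itemize}
Hence
\[ \phi(b,u,w) = \bigl(\phi'(b,u)\bigr) + \bigl(L_i(b) w_i\bigr)_i, \]
where $\phi' \colon B \times \bA^{\umu'} \to \bA^{\ulambda}$ and $w_i \in U_i \otimes \bA^{\beta_i}$. So the image of $\phi$ is the union over $b$ of the translates $\phi'(b,\bA^{\umu'}) + \prod_i \im(L_i(b)) \otimes \bA^{\beta_i}$.

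\emph{Constructing $C$, $\cR_i$, $W$.} Let $k_i$ be the generic rank of $L_i$, and shrink $B$ to a dense open $B^\circ$ on which each rank equals $k_i$. Then $b \mapsto (\im L_i(b))_i$ defines a morphism $B^\circ \to \prod_i \Gr(k_i,V_i)$. Let $C_0$ be the closure of its image, which is an irreducible projective variety. Let $\pi \colon C \to C_0$ be a resolution of singularities, which is smooth and projective and an isomorphism over the smooth locus of $C_0$. After shrinking $B^\circ$ further, the map lifts to $\kappa \colon B^\circ \to C$. Let $\cR_i$ be the pullback of the tautological subbundle. Define $W \subset C \times \bA^{\ulambda}$ as the closure of the image of
\[ B^\circ \times \bA^{\umu'} \to C \times \bA^{\ulambda}, \qquad (b,u) \mapsto (\kappa(b), \phi'(b,u)). \]
This $W$ is irreducible and $\GL$-stable, and it is dominated by $B^\circ \times \bA^{\umu'}$, so $\ud(W) \le \Vert \umu' \Vert < d$.

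\emph{Surjectivity.} This is the main point, and the reason $C$ is chosen projective. Let $p \colon C \times \bA^{\ulambda} \to \bA^{\ulambda}$ be the projection.
\begin{itemize}
\item $W^+$ is the closure of the image of the irreducible $\GL$-variety $W \times_C \prod_i (\cR_i \otimes \bA^{\beta_i})$ under addition, so it is irreducible.
\item $p(W^+) \subseteq X$. On a dense subset of $W$, the added fibers are exactly $\im L_i(b) \otimes \bA^{\beta_i}$, so the image lands in $\im \phi \subset X$. Continuity together with closedness of $X$ handles the closures.
\item $p(W^+)$ contains $\phi(B^\circ \times \bA^{\umu})$, which is dense in $X$.
\item $C \to \Spec K$ is proper, so its base change $p$ is universally closed, even though $\bA^{\ulambda}$ is not of finite type. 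Thus $p(W^+)$ is closed, and it therefore equals $X$.
\end{itemize}
The step requiring most care is this closedness/surjectivity argument. A secondary subtlety is verifying that the degree-$d$ part of $\phi$ is affine-linear in the $w_i$ with coefficients depending only on $b$, with no dependence on $u$. This holds because the degree-$d$ variables already have full central degree $d$, so their coefficients must have degree $0$, i.e.\ be functions of $b$ alone.
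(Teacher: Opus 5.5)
Your proposal is correct and follows essentially the same route as the paper: split $\phi$ via the central grading into a part depending only on $b$ and the size-$<d$ variables, plus $\sum_i \lambda^i(b)(y_i)$; map $B$ to Grassmannians via the images of the $\lambda^i(b)$; take $W$ to be the closure of the image of the size-$<d$ part; and conclude surjectivity from density plus properness of $C$. The one difference is that your closure-plus-resolution step is unnecessary. The paper simply takes $C=\prod_i \Gr_{d_i}(V_i)$ itself with its tautological subbundles, which is already smooth, projective and irreducible, since nothing requires $W$ to dominate $C$.
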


We think of $\cR_i \otimes \bA^{\beta_i}$ as a group over $C$: the fiber over $c \in C$ is a group, and it acts on $\{c\} \times \bA^{\ulambda}$.
The proof of Proposition~\ref{prop:struct} will take all of \S \ref{ss:struct}. Apply the unirationality theorem to obtain a dominant morphism
\begin{displaymath}
\phi \colon B \times \bA^{\umu} \times \prod_{i=1}^r (U_i \otimes \bA^{\beta_i}) \to X \subset \bA^{\ulambda}
\end{displaymath}
where $B$ is an irreducible affine variety, $\umu$ is a pure tuple with $\Vert \umu \Vert<d$, and the $U_i$'s are finite dimensional vector spaces. The following lemma decomposes $\phi$ into two pieces. This roughly corresponds to the decomposition of $X$ in the proposition.

\begin{lemma} \label{lem:struct-1}
There exists a morphism $\psi \colon B \times \bA^{\umu} \to \bA^{\ulambda}$ of $\GL$-varieties, and morphisms $\lambda^i \colon B \to \Hom_K(U_i, V_i)$ of varieties, such that
\begin{displaymath}
\phi(b,x, y_1, \ldots, y_r) = \psi(b,x)+\sum_{i=1}^r (\lambda^i(b) \otimes \id_{\bA^{\beta_i}})(y_i)
\end{displaymath}
for $b \in B$, $x \in \bA^{\umu}$, and $y_i \in U_i \otimes \bA^{\beta_i}$.
\end{lemma}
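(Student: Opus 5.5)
We prove the lemma by examining $\phi$ one coordinate factor of the target at a time, using the central grading together with the decomposition of $\Sym$ of the source into irreducible $\GL$-representations. Write $\phi$ dually as a $\GL$-equivariant homomorphism $\phi^* \colon K[\bA^{\ulambda}] \to K[B] \otimes \Sym(\bV_{\umu}) \otimes \bigotimes_i \Sym(U_i^* \otimes \bV_{\beta_i})$. The morphism is determined by the images of the generating representations of $K[\bA^{\ulambda}]$, namely the copies of $\bV_{\alpha}$ for $\alpha$ in $\ul{\alpha}$ and the copies $V_i^* \otimes \bV_{\beta_i}$. Each such copy must land, $\GL$-equivariantly, in the isotypic component of the same type inside the target algebra. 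That isotypic component lies in central degree $|\alpha|$ or $d$ respectively.

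First we study the central-degree-$d$ part of the target algebra. A product of generators has central degree equal to the sum of the sizes of the partitions involved. Because every partition in $\umu$ is nonempty and the $\beta_i$ have size $d$, a degree-$d$ monomial takes one of two forms. It is either a single linear generator from some $U_i^* \otimes \bV_{\beta_i}$ with coefficient in $K[B]$, or a polynomial purely in the $\bV_{\umu}$ variables, again with coefficient in $K[B]$. Mixed monomials containing a $\beta_i$-variable together with anything else of positive degree have degree $>d$. Those containing only $\beta$-variables have degree $\ge 2d$ unless they are linear. Consequently the degree-$d$ part of the target algebra is
\[
\bigl(K[B] \otimes \Sym(\bV_{\umu})\bigr)_d \oplus \bigoplus_i K[B] \otimes U_i^* \otimes \bV_{\beta_i}.
\]
Likewise, the degree-$<d$ part is simply $(K[B] \otimes \Sym(\bV_{\umu}))_{<d}$.

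Next we read off the decomposition from this description. Images of the generators $\bV_\alpha$ with $|\alpha|<d$ involve only $B$ and $\bA^{\umu}$, and they define the $\ul{\alpha}$-components of $\psi$; the $\lambda$-part contributes zero there. Now take the generators $V_i^* \otimes \bV_{\beta_i}$. Their image splits into two summands. The first lies in $K[B] \otimes \Sym(\bV_{\umu})$, and this summand defines the remaining components of $\psi$. The second is a $\GL$-map $V_i^* \otimes \bV_{\beta_i} \to \bigoplus_j K[B] \otimes U_j^* \otimes \bV_{\beta_j}$. By Schur's lemma, together with the fact that the $\beta_j$ are distinct, only the $j=i$ summand survives. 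That summand is given by an element of $K[B] \otimes \Hom(V_i^*, U_i^*)$, that is, a morphism $\lambda^i \colon B \to \Hom_K(U_i,V_i)$, acting as $\lambda^i(b) \otimes \id$. The target coordinates are linear generators, so the values $\phi(b,x,y)$ add up from these two summands, and this gives exactly the displayed formula.

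The main point of care is the degree-counting step. It is what rules out cross terms between $\bA^{\umu}$ and the $y_i$, and also cross terms among the $y_i$ themselves. This relies essentially on the purity of $\umu$: an empty partition would contribute degree-$0$ variables, which in effect just enlarge $B$. For that reason we should note that $\umu$ is pure, or else absorb any degree-$0$ part into $B$ beforehand. The same counting fixes the degrees involved and ensures that the $\lambda^i$ are regular functions on $B$.
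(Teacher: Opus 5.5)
Your proof is correct and follows essentially the same route as the paper. The $\bA^{\ul{\alpha}}$-components cannot involve the $y_i$, and each $V_i \otimes \bA^{\beta_i}$-component splits as a function of $(b,x)$ plus a part linear in $y_i$, with Schur's lemma reducing that part to $\lambda^i(b) \otimes \id$. Your central-degree count on the coordinate ring, which uses the purity of $\umu$, is a careful version of the paper's terse two-sentence argument, and it is what actually rules out mixed terms in $x$ and $y_i$; the paper's appeal to equivariant maps $\bA^{\beta_i} \to \bA^{\ul{\alpha}}$ leaves that point implicit.
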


\begin{proof}
Consider the $\bA^{\ul{\alpha}}$ component of $\phi$. Since $\Vert
\ul{\alpha} \Vert<d= \vert \beta_i \vert$, the only $\GL$-equivariant map $\bA^{\beta_i} \to \bA^{\ul{\alpha}}$ is the zero map. Thus the $\bA^{\ul{\alpha}}$ component of $\phi$ must factor through $B \times \bA^{\umu}$.

Now consider the $V_i \otimes \bA^{\beta_i}$ component of $\phi$. This
must be a sum of some function on $B \times \bA^{\umu}$ and a function
on $B \times (U_i \otimes \bA^{\beta_i})$ that is linear in the second
component. This is exactly what the lemma asserts.
\end{proof}

A standard argument shows that there is a non-empty open
affine subset of $B$ on which the rank of each $\lambda^i$
is constant (and maximal), say equal to $d_i$. Replace $B$ with this open 
affine. Let $C=\Gr_{d_1}(V_1) \times \cdots \times \Gr_{d_r}(V_r)$, where $\Gr$ denotes the Grassmanian of subspaces, and let $\cR_i$ be the tautological subbundle of $C \times V_i$. We have a morphism
\begin{displaymath}
\rho \colon B \to C, \qquad \rho(b) = (\im{\lambda^1(b)}, \ldots,
\im{\lambda^r(b)}).
\end{displaymath}
Consider the map
\begin{align*}
\theta \colon B \times \bA^{\umu} \times \prod_{i=1}^r (U_i \otimes \bA^{\beta_i}) &\to C \times \bA^{\ulambda} \\
(b, x, y_1, \ldots, y_r) &\mapsto (\rho(b), \phi(b,x, y_1, \ldots, y_r))
\end{align*}
We let $W_0 \subseteq C \times \bA^{\ulambda}$ be the image of $B \times \bA^{\umu} \times \{0\}$ under $\theta$, and let $W$ be the closure of $W_0$. Note that $W_0$ is irreducible, and so $W$ is as well, and furthermore $\ud(W)<d$. We have now defined $C$, $\cR_i$, and $W$ as in the statement of Proposition~\ref{prop:struct}. Let $W_0^+=W_0+\prod_{i=1}^r(\cR_i \otimes \bA^{\beta_i})$, and let $W^+$ be as in the proposition.

\begin{lemma} \label{lem:struct-2}
We have $\im(\theta)=W_0^+$.
\end{lemma}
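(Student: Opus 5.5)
We prove the two inclusions separately, working pointwise: $\theta$ is a morphism between $\GL$-varieties, and both sides are described fiberwise over $C$. By Lemma~\ref{lem:struct-1}, every point in the image of $\theta$ has the form
\begin{displaymath}
\theta(b,x,y_1,\ldots,y_r) = \Bigl(\rho(b),\ \psi(b,x)+\sum_{i=1}^r (\lambda^i(b)\otimes\id)(y_i)\Bigr).
\end{displaymath}
Observe that $\theta(b,x,0,\ldots,0)=(\rho(b),\psi(b,x))$ is precisely a point of $W_0$.

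For the inclusion $\im(\theta)\subseteq W_0^+$, fix $(b,x,y_1,\ldots,y_r)$ and set $c=\rho(b)$. The $i$-th component of $c$ is the subspace $\im\lambda^i(b)\subseteq V_i$, which is exactly the fiber $(\cR_i)_c$. Hence
\begin{displaymath}
(\lambda^i(b)\otimes\id)(y_i)\in \im(\lambda^i(b))\otimes\bA^{\beta_i}=(\cR_i)_c\otimes\bA^{\beta_i}.
\end{displaymath}
So the image point is $(c,\psi(b,x))$, which lies in $W_0$, translated by an element of the fiber over $c$ of the group $\prod_i(\cR_i\otimes\bA^{\beta_i})$. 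This is the definition of a point of $W_0^+$.

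For the reverse inclusion, a point of $W_0^+$ has the form $(c,\psi(b,x)+z)$ with $c=\rho(b)$ and $z=(z_1,\ldots,z_r)$, where $z_i\in(\cR_i)_c\otimes\bA^{\beta_i}=\im(\lambda^i(b))\otimes\bA^{\beta_i}$. It remains to find $y_i\in U_i\otimes\bA^{\beta_i}$ with $(\lambda^i(b)\otimes\id)(y_i)=z_i$. This follows from the surjectivity of the linear map $\lambda^i(b)\colon U_i\to\im\lambda^i(b)$: a surjection of finite-dimensional spaces has a linear section $s$, and $(s\otimes\id)(z_i)$ is a preimage, since $\lambda^i(b)\otimes\id$ is a surjection onto $\im(\lambda^i(b))\otimes\bA^{\beta_i}$ (tensoring with $\bV_{\beta_i}$ is exact). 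Note also that $\theta$ is defined only on the open set of $B$ chosen above, where $\rho$ makes sense because the ranks are constant.

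The only point requiring care is the interpretation of the statement at the level of schemes rather than $K$-points. I would state the equality for $K$-points, or more generally $\Omega$-points for $\Omega\supseteq K$ algebraically closed, where the argument above works verbatim after base change. Since $\im(\theta)$ is constructible by Chevalley's theorem for $\GL$-varieties, equality on points over all algebraically closed fields yields equality as subsets of scheme-theoretic points. I do not expect any real obstacle here; the main thing is keeping track of the fiberwise group structure.
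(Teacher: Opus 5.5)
Your proof is correct and follows the paper's argument essentially verbatim. The forward inclusion comes from Lemma~\ref{lem:struct-1} together with $(\cR_i)_{\rho(b)}=\im\lambda^i(b)$, and the reverse inclusion lifts each fiber element through the surjection $U_i\to\im\lambda^i(b)$; you correctly take $y_i\in U_i\otimes\bA^{\beta_i}$ where the paper has a typo. Your closing remark about scheme-theoretic points goes beyond the paper, which argues pointwise, and what actually justifies it is that every scheme point underlies a geometric point over a large enough algebraically closed field, so Chevalley constructibility is not needed there.
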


\begin{proof}
By Lemma~\ref{lem:struct-1}, we have
\begin{displaymath}
\theta(b, x, y_1, \ldots, y_r)
= (\rho(b), \psi(b,x) + \sum_{i=1}^r (\lambda^i(b) \otimes
\id_{\bA^{\beta_i}})(y_i)).
\end{displaymath}
If we set all $y_i$ equal to zero, this expression lies in $W_0$, 
while the $i$th term in the sum belongs to the fiber of 
$\cR_i \otimes \bA^{\beta_i}$ over $\rho(b)$; thus the entire expression belongs to $W_0^+$. This proves $\im(\theta) \subset W_0^+$.

Now let $(c,u)$ with $c \in C$ and $u \in \bA^{\ulambda}$ be a given element
of $W_0^+$. Then we can write $u=v+\sum_{i=1}^r
r_i$, where $(c,v) \in W_0$ and each $r_i$ lies in the fiber of
$\cR_i \otimes \bA^{\beta_i}$ over $c$.
Write $(c,v)=\theta(b,x,0)$ for some $b \in B$ and $x \in
\bA^{\umu}$; note that $c=\rho(b)$. By the definition of $\cR_i$, we
can find $y_i \in V_i \otimes \bA^{\beta_i}$ such that $r_i=(\rho(b),
(\lambda^i(b) \otimes \id_{\bA^{\beta_i}})(y_i))$. We thus see that $p=\theta(b, x, y_1, \ldots, y_r)$, which completes the proof.
\end{proof}

The following lemma completes the proof of the proposition:

\begin{lemma}
The projection $p_2 \colon C \times \bA^{\ulambda} \to \bA^{\ulambda}$ maps $W^+$ surjectively to $X$.
\end{lemma}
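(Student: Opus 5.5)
The plan is to prove the two inclusions $p_2(W^+) \subset X$ and $X \subset p_2(W^+)$ separately. The first is a continuity argument. The second uses that $C$ is projective, so $p_2$ is a closed map and its image is a closed set containing the dense set $\im(\phi)$.

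For $p_2(W^+) \subset X$: by Lemma~\ref{lem:struct-2}, $W_0^+=\im(\theta)$. By definition of $\theta$ we have $p_2 \circ \theta = \phi$, so $p_2(W_0^+)=\im(\phi) \subset X$. Hence $W_0^+ \subset p_2^{-1}(X)$. The set $p_2^{-1}(X)$ is closed in $C \times \bA^{\ulambda}$ because $X$ is closed in $\bA^{\ulambda}$, so it also contains the Zariski closure $W^+$. Therefore $p_2(W^+) \subset X$.

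For $X \subset p_2(W^+)$: the image $p_2(W^+)$ contains $p_2(W_0^+) = \im(\phi)$. Recall that $B$ was replaced by a non-empty open affine subset. This restricted $\phi$ is still dominant onto $X$: the domain is irreducible, and the original map was dominant, so the image of a dense open subset is still dense in the irreducible $X$. Hence $p_2(W^+)$ contains a dense subset of $X$, and it remains to see that $p_2(W^+)$ is closed. Since $C$ is a product of Grassmannians, it is projective, so $C \to \Spec K$ is proper and in particular universally closed. The map $p_2 \colon C \times \bA^{\ulambda} \to \bA^{\ulambda}$ is its base change along $\bA^{\ulambda} \to \Spec K$, so $p_2$ is a closed map on underlying topological spaces. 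Universal closedness applies to arbitrary base changes, so this holds even though $\bA^{\ulambda}$ is not of finite type over $K$. Thus $p_2(W^+)$ is closed, and therefore $p_2(W^+) \supset \ol{\im(\phi)} = X$.

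The only step needing care is the closedness of $p_2$ in this infinite-dimensional setting. I expect it to go through as above because it is purely a statement about universal closedness of $C \to \Spec K$, with no finiteness hypothesis on the base needed. As a backup, one can work with the $\{K^n\}$ truncations, which are of finite type, but the universal closedness argument should suffice directly. The surjectivity is then meant in the scheme-theoretic sense of all points, which is exactly what this argument gives.
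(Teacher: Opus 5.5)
Your proposal is correct and follows essentially the same route as the paper: $p_2\circ\theta=\phi$ and Lemma~\ref{lem:struct-2} give $p_2(W_0^+)=\im(\phi)$, which is dense in $X$, and properness of $p_2$ (because $C$ is projective) upgrades density to surjectivity. You also spell out details the paper leaves implicit: the containment $p_2(W^+)\subseteq X$, the fact that shrinking $B$ keeps $\phi$ dominant, and that closedness of $p_2$ needs only universal closedness of $C\to\Spec K$, which works even though $\bA^{\ulambda}$ is not of finite type.
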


\begin{proof}
We have $\phi = p_2 \circ \theta$. By Lemma~\ref{lem:struct-2}, we see that $\im(\phi)=p_2(W_0^+)$. Since $\im(\phi)$ is a dense subset of $X$, and $W_0^+$ is a dense subset of $W^+$, we see that $p_2(W^+)$ is a dense subset of $X$. Since $W^+$ is closed and the map $p_2$ is proper (since $C$ is projective), it follows that $p_2(W^+)=X$, as required.
\end{proof}

\subsection{An improved structural result}

We now make some slight improvements on Proposition~\ref{prop:struct}:

\begin{proposition} \label{prop:struct2}
Let $X$ be an irreducible closed $\GL$-subvariety of $\bA^{\ulambda}$.
There exist:
\begin{itemize}
\item an irreducible smooth affine variety $C$,
\item for each $1 \leq i \leq r$ a direct summand $\cR_i$ of the vector
bundle $C \times V_i$ on $C$ that is also trivial as a vector bundle, and
\item an irreducible closed $\GL$-subvariety $W$ of $C \times \bA^{\ulambda}$ with $\ud(W)<d$
\end{itemize}
such that, letting $W^+ \subset C \times \bA^{\ulambda}$ be the
Zariski closure of $W + \prod_{i=1}^r (\cR_i \otimes \bA^{\beta_i})$,
the projection map $C \times \bA^{\ulambda} \to \bA^{\ulambda}$
restricts to a surjection $W^+ \to X$.
\end{proposition}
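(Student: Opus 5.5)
We start from the data $(C,\cR_i,W)$ supplied by Proposition~\ref{prop:struct}, with $C$ a smooth projective variety (a product of Grassmannians), and modify $C$ twice. First we make the subbundles trivial direct summands. Then we pass to an affine base while keeping surjectivity onto $X$.

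Triviality and the direct summand property can be arranged on an open cover, which is where we begin. Cover $C=\prod_i \Gr_{d_i}(V_i)$ by finitely many standard affine opens $C_1,\ldots,C_N$. On each $C_j$ the tautological bundle $\cR_i$ is trivial and has a trivial complement: on a standard chart it is the graph of a linear map $E\to E'$ for a fixed splitting $V_i=E\oplus E'$, so $\cR_i\oplus (C_j\times E')=C_j\times V_i$. Each $C_j$ is smooth, irreducible and affine. The open sets $W^+\cap (C_j\times\bA^{\ulambda})$ cover $W^+$, so their images cover $X$. However, each chart separately need not surject onto $X$, and irreducibility forbids taking a disjoint union of charts. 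The disjoint union $\coprod_j C_j\to C$ is affine, smooth and surjective, and it is the failure of irreducibility that must be repaired.

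To repair it we use an irreducible affine variety mapping surjectively onto $C$ through which the bundles become trivial summands. The natural choice is the Stiefel-type variety. For each $i$ let $F_i$ be the variety of pairs $(A_i,A_i')$ of injective linear maps $A_i\colon K^{d_i}\to V_i$ and $A_i'\colon K^{\dim V_i-d_i}\to V_i$ with $\im A_i\oplus \im A_i'=V_i$. Equivalently, $F_i=\GL(V_i)$ after a choice of basis, which is irreducible, smooth and affine. Set $\tilde C=\prod_i F_i$, with the surjective morphism $q\colon \tilde C\to C$ sending $(A_i,A_i')_i$ to $(\im A_i)_i$.

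On $\tilde C$ the pullback $q^*\cR_i$ is trivialized by $A_i$ and has complement $\im A_i'$, so it is a trivial direct summand of $\tilde C\times V_i$. Define $\tilde W$ to be an irreducible component of $(q\times\id)^{-1}(W)$ that dominates $W$. This is available because $q$ is a $\GL$-torsor, in fact a smooth morphism with irreducible fibers, so $(q\times \id)^{-1}(W)$ is irreducible. The pullback $\tilde W$ is a closed $\GL$-subvariety of $\tilde C\times\bA^{\ulambda}$.

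It remains to check $\ud(\tilde W)<d$. For this, pull back a dominant map $B'\times\bA^{\umu}\to W$ with $\Vert\umu\Vert<d$ along the fiber product with the smooth map $q$. The result is $B''\times\bA^{\umu}\to\tilde W$ with $B''=B'\times_C\tilde C$, which is dominant since $q$ is flat and therefore open. Alternatively, and more simply, we can redo the construction of \S\ref{ss:struct} directly, mapping $B\times\GL(\ldots)$ to $\tilde C$ via $\rho$ and a lift, so that $\tilde W$ is explicitly the closure of the image of some $B''\times\bA^{\umu}$.

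Finally we check surjectivity. We have $(q\times \id)(\tilde W+\prod q^*\cR_i\otimes\bA^{\beta_i})=W+\prod\cR_i\otimes\bA^{\beta_i}$, since $q$ is surjective on fibers. Hence the image of $\tilde W^+$ contains a dense subset of $W^+$, but properness is now lost.

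This is the main obstacle. The remedy is that $\tilde W^+$ is stable under the action of $\prod_i\GL(V_i)$ on $\tilde C$, with $q$ equivariant and fibers being single orbits. Concretely, take the closure of $\tilde W^+$ to be $(q\times\id)^{-1}(W^+)$, or the appropriate irreducible component of it. We can do so because $\tilde W^+$ is defined as the closure of a set saturated for the fibers of the smooth surjection $q\times\id$. Then $(q\times\id)(\tilde W^+)=W^+$ exactly, and composing with the surjection $W^+\to X$ from Proposition~\ref{prop:struct} gives the required surjection $\tilde W^+\to X$.
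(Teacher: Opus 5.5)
Your proposal is correct and is essentially the paper's argument. Your frame variety $\tilde C=\prod_i F_i$ is, up to inverting the isomorphisms, the paper's $C_1=\prod_C\uIsom(\cR_i\subseteq\cV_i,K^{d_i}\subseteq K^{n_i})$; as you note, over a product of Grassmannians it is already affine, so the Jouanolou step is not needed. Your irreducibility, $\ud$, and surjectivity checks are exactly Lemma~\ref{lem:struct2-1}(a),(b).

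The one thing to fix is the justification of surjectivity. It rests solely on $q\times\id$ being open: the closure of the saturated set $(q\times\id)^{-1}\bigl(W+\prod_i\cR_i\otimes\bA^{\beta_i}\bigr)$ is then exactly $(q\times\id)^{-1}(W^+)$, with no choice of component involved. The appeal to $\prod_i\GL(V_i)$ is wrong as stated, for two reasons. That group acts transitively on $\tilde C$ and need not preserve $\tilde W$. The fibers of $q$ are instead orbits of the parabolic subgroups $P_i\subset\GL_{n_i}$ acting by precomposition, so $q$ is a $\prod_i P_i$-torsor.
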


The differences with Proposition~\ref{prop:struct} are that $C$ is now
affine and each $\cR_i$ is a trivial bundle. To begin, let $C$, $\cR_i$, $W$, and $W^+$ be as in
Proposition~\ref{prop:struct}. We will first show that the properties
we care about are preserved by base change along certain morphisms $C'
\to C$, and we will then construct a base change that proves
Proposition~\ref{prop:struct2}.

\begin{lemma} \label{lem:struct2-1}
Let $f \colon C' \to C$ be a surjective smooth morphism with (geometrically) irreducible fibers, and let $\cR'_i$ and $W'$ be the pull-backs of $\cR_i$ and $W$ under $f$. Let $(W')^+$ be the Zariski closure of $W' + \prod_{i=1}^r (\cR_i' \times \bA^{\beta_i})$. Then:
\begin{enumerate}
\item $W'$ is irreducible and $\ud(W')<d$.
\item The projection map $(W')^+ \to X$ is surjective.
\end{enumerate}
\end{lemma}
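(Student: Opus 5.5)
The plan is to view $W' = W \times_C C'$ inside $C' \times \bA^{\ulambda}$ and to use only that $f$ is smooth and surjective with irreducible fibers. Both parts should then follow from formal properties of such base changes.

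For (a), irreducibility comes from openness of $f$. The projection $W' \to W$ is a base change of $f$, so it is smooth, hence open, and surjective with irreducible fibers (the fiber over $(c,u)\in W$ is $f^{-1}(c)$). An open surjective map onto an irreducible space whose fibers are all irreducible has irreducible source. I will apply this standard topological lemma on the level of the underlying spaces. This also settles reducedness: $W'$ is smooth over the reduced $W$, so it is reduced.

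One point needs care: $W$ is a $\GL$-variety, which is not of finite type. I would handle this by working on the coordinate-ring level, or by passing to $W\{K^n\}$ for all $n$, where everything is finite type and $W'\{K^n\} = W\{K^n\}\times_C C'$. Alternatively I would cite the topological lemma, which needs no finiteness.

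For $\ud(W')<d$, I start from a dominant map $B\times\bA^{\umu}\to W$ with $\Vert\umu\Vert<d$ and form the fiber product $B' = B\times_C C'$, using the composite $B\to W\to C$. Then $B'\times\bA^{\umu}\to W'$ is the base change of this map along $W'\to W$. Base change along an open surjection preserves dominance. Concretely, if $U\subset W'$ is a nonempty open set, its image in $W$ is open and meets the image of $B\times\bA^{\umu}$; a point in that intersection lifts to a point of $U$ that lies in the image of $B'\times\bA^{\umu}$, by the fiber product property. Finally I replace $B'$ by an irreducible component that still dominates, which is possible since $W'$ is irreducible and $B'$ has finitely many components.

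For (b), let $\Phi\colon C'\times\bA^{\ulambda}\to C\times\bA^{\ulambda}$ be $f\times\id$. The set $W'+\prod(\cR_i'\otimes\bA^{\beta_i})$ equals $\Phi^{-1}\bigl(W+\prod(\cR_i\otimes\bA^{\beta_i})\bigr)$, because addition happens fiberwise over $C$ and $\cR'_i$ is the pullback of $\cR_i$. Since $\Phi$ is open (smooth), taking preimages commutes with closures, so $(W')^+=\Phi^{-1}(W^+)$. As $f$ is surjective, $\Phi$ maps $\Phi^{-1}(W^+)$ onto $W^+$. Composing with the surjection $W^+\to X$ from Proposition~\ref{prop:struct} gives surjectivity of $(W')^+\to X$.

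The main obstacle is the identity $\ol{\Phi^{-1}(Z)}=\Phi^{-1}(\ol{Z})$ in this infinite-dimensional setting, together with the irreducibility lemma. Both rely only on openness of $\Phi$. I would verify that $\Phi$ is open by noting that it is flat and locally of finite presentation, being a base change of $f$; such morphisms are open without any Noetherian hypotheses.
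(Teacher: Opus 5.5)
Your proposal is correct and follows essentially the same route as the paper. Irreducibility of $W'$ comes from the open map $W'\to W$ with irreducible fibers. Dominance comes from pulling back $B\times\bA^{\umu}\to W$ to $B'\times\bA^{\umu}\to W'$ with $B'=B\times_C C'$. Part (b) uses that closure commutes with preimage under the open map $f\times\id$, so $(W')^+$ is the base change of $W^+$ and surjects onto it.

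The differences are minor and harmless. You get density of the image directly from openness, rather than via a nonempty open set contained in the image. You also pass to a dominating irreducible component of $B'$, where the paper instead shows $B'$ itself is irreducible.
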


\begin{proof}
(a) We have a dominant morphism $\psi \colon B \times \bA^{\umu} \to W$ with
$\Vert \umu \Vert<d$. Let $B'=B \times_C C'$, so that $\psi$ pulls
back to $\psi' \colon B' \times \bA^{\umu} \to W'$. Since $B' \to B$ is a smooth morphism
with irreducible fibers and $B$ is irreducible, it follows that $B'$ is irreducible\footnote{This is
well-known, but we recall the proof. Let $U$ and $V$ be non-empty open subsets of $B'$. Let $\ol{U}$ and $\ol{V}$ be their images in $B$, which are open open since the map is smooth. Since $B$ is irreducible,
it follows that $\ol{U}$ and $\ol{V}$ intersect. Let $x$ be a point of intersection. Then $U \cap B_x$ and
$V \cap B_x$ are non-empty open subsets of the irreducible space $B_x$, and thus intersect. Thus $U$
and $V$ intersect. Since all non-empty opens in $B$ intersect, it follows that $B$ is irreducible.}.
Similarly for $W'$. Since $\psi$ is dominant, its image contains a non-empty open subset of $W$. It follows that the image
of $\psi'$ contains a non-empty open subset of $W'$. Since $W'$ is irreducible, we thus see that $\psi'$
is dominant. Replacing $B'$ with an affine open if necessary, we find $\ud(W')<d$.

(b) Since any base change of $f$ is an open map, it follows that formation of the image-closure of a map
commutes with base change along $f$. Hence $(W')^+$ is the base change of $W^+$, and so $(W')^+$ surjects onto $W^+$, and thus onto $X$.
\end{proof}

\begin{lemma} \label{lem:struct2-2}
We can choose $f \colon C' \to C$ such that the following conditions hold:
\begin{enumerate}
\item $C'$ is an irreducible affine variety,
\item $f$ is a smooth surjective map with (geometrically) irreducible fibers, and
\item $f^*(\cR_i)$ is a trivial bundle and a direct summand of $C' \times V_i$, for all $i$.
\end{enumerate}
\end{lemma}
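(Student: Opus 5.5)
We build $C'$ in two steps: first pass to an affine space bundle over $C$, then to a frame bundle over that. Here $C=\prod_i \Gr_{d_i}(V_i)$ is smooth, projective and irreducible, and $\cR_i$ is the tautological subbundle.

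\emph{Step 1 (Jouanolou's trick).} Choose an affine space bundle (torsor under a vector bundle) $g\colon C_1\to C$ with $C_1$ affine, as in the proof of Proposition~\ref{prop:mapping}. Then $g$ is smooth and surjective with fibers affine spaces, and $C_1$ is irreducible and smooth. On the affine scheme $C_1$, every short exact sequence of vector bundles splits, because $\mathrm{Ext}^1$ between locally free sheaves is computed by $H^1$ of a coherent sheaf, which vanishes on affines. So $g^*\cR_i$ is a direct summand of $C_1\times V_i$. Concretely, one can take $C_1$ to be the variety of pairs (subspace, complement), i.e.\ the open set of idempotents of rank $d_i$ in $\mathrm{End}(V_i)$. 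Its fiber over a subspace $R$ is the affine space of complements, and the splitting there is tautological. This explicit model is the version I would actually use.

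\emph{Step 2 (trivialize).} Let $C'\to C_1$ be the frame bundle of $\prod_i g^*\cR_i$, i.e.\ the principal $\prod_i \GL_{d_i}$-bundle whose fiber over $c$ consists of tuples of bases of the fibers $(g^*\cR_i)_c$. This bundle is affine over $C_1$: it is the open subscheme of the total space of $\bigoplus_i \uHom(K^{d_i},g^*\cR_i)$ defined by nonvanishing of the determinant, a principal open in an affine scheme. Hence $C'$ is affine. The map $C'\to C_1$ is smooth and surjective with fibers $\prod_i\GL_{d_i}$, which are irreducible. The pullback of $\cR_i$ to $C'$ is trivialized by the universal frame, and it remains a direct summand of $C'\times V_i$ because direct summands pull back to direct summands.

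\emph{Conclusion.} Set $f$ equal to the composite $C'\to C_1\to C$. A composite of smooth surjective maps is smooth and surjective. The fibers of $f$ are irreducible: each fiber is smooth over an irreducible fiber of $g$ with irreducible fibers, so the footnote argument in Lemma~\ref{lem:struct2-1} applies. For the same reason $C'$ is irreducible, and it is affine by Step~2. This gives (a), (b) and (c).

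The only point needing care is the affineness of $C_1$ and $C'$ together with irreducibility of the fibers. The explicit idempotent model handles affineness directly: the idempotents in $\prod_i\mathrm{End}(V_i)$ form a closed subvariety, and the rank-$d_i$ locus is a connected component (rank is locally constant on idempotents), hence closed. Its fibers over $C$ are affine spaces, and the fibers of the frame bundle are general linear groups, so all fibers are geometrically irreducible.
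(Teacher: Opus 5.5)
Your proof is correct, but it runs the two steps in the opposite order from the paper.

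The paper first passes to $C_1=\prod_i \uIsom(\cR_i\subseteq\cV_i,K^{d_i}\subseteq K^{n_i})$, a torsor under the parabolic stabilizers of $K^{d_i}\subseteq K^{n_i}$. A point of it is an isomorphism $\phi_i\colon V_i\to K^{n_i}$ carrying $(\cR_i)_c$ onto $K^{d_i}$. This gives at once a frame of $\cR_i$ and a complement $\cS_i=\phi_i^{-1}(\{0\}\times K^{n_i-d_i})$. Only then does the paper apply Jouanolou's trick to $C_1$ as a black box, using that triviality and the splitting survive pullback.

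You instead make the base affine first. You get the splitting for free, from $\mathrm{Ext}^1$-vanishing or tautologically in the idempotent model. Then you trivialize by a $\GL_{d_i}$-frame bundle, which keeps you affine because a torsor under an affine group is an affine morphism.

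What each route buys:
\begin{itemize}
\item The paper's order handles splitting and trivialization with a single torsor and needs no cohomology.
\item Your order, for Grassmannians, is completely explicit and avoids Jouanolou's trick altogether. Writing $e_i=\phi_i\psi_i$, your $C'$ is just
\[
\prod_i\{(\phi_i,\psi_i)\in\Hom(K^{d_i},V_i)\times\Hom(V_i,K^{d_i}) : \psi_i\phi_i=\id\},
\]
which is visibly a closed subvariety of an affine space.
\end{itemize}

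One small correction. The determinant of the universal frame is not a regular function on the total space of $\uHom(K^{d_i},g^*\cR_i)$. It is a section of the pullback of the line bundle $\det(g^*\cR_i)$, which is in general nontrivial (e.g.\ on the idempotent model). So the frame bundle is not literally a principal open. It is nevertheless affine, because its inclusion into the total space is an affine morphism (locally a principal open), and that is all your argument needs.
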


\begin{proof}
Write $\cV_i$ for the trivial vector bundle $C \times V_i$ over $C$,
and write $d_i$ and $n_i=\dim(V_i)$ for the ranks of $\cR_i$ and $\cV_i$,
respectively. We write $K^{d_i}$ for the subspace $K^{d_i} \times
\{0\}^{n_i-d_i}$ of $K^{n_i}$. 
Let $\uHom(\cR_i \subseteq \cV_i,K^{d_i} \subseteq K^{n_i})$ be the vector bundle on $C$ whose
fiber over a point $c$ is the space of linear maps $V_i \to
K^{n_i}$ that map $(\cR_i)_c$ into $K^{d_i}$; 
and let $\uIsom(\cR_i \subseteq \cV_i,K^{d_i} \subseteq K^{n_i})$ be the open subset of
$\uHom(\cR_i \subseteq \cV_i,K^{d_i} \subseteq K^{n_i})$ where that
linear map is a linear isomorphism. The structure map $\uIsom(\cR_i
\subseteq \cV_i,K^{d_i} \subseteq K^{n_i}) \to C$ is surjective,
smooth, and has irreducible geometric fibers (the fibers are
isomorphic to the (parabolic) stabilizer in $\GL_{n_i}$ of the 
subspace $K^{d_i}$). Put
\begin{displaymath}
C_1=\prod_{i=1}^r \uIsom(\cR_i \subseteq \cV_i, K^{d_i} \subseteq K^{n_i})
\end{displaymath}
where the product is over $C$. Then $f_1\colon C_1 \to C$ is surjective,
smooth, and has irreducible geometric fibers. Furthermore, the pullback
$f_1^*(\cR_i)$ of $\cR_i$ to $C_1$ is trivialized by the map $f_1^*(\cR_i)
\to C_1 \times K^{d_i}$ that sends a point $(c,\phi_1,\ldots,\phi_r,v)$
to $(c,\phi_1,\ldots,\phi_r,\phi_i(v)) \in C_1 \times K^{d_i}$.
Furthermore, let $\cS_i$ be the bundle over $C_1$ whose fibre over
$(c,\phi_1,\ldots,\phi_r)$ equals $\phi_i^{-1}(\{0\}^{d_i} \times
K^{n_i-d_i}) \subseteq V_i$. Then $f_1^*(\cV_i)$ is isomorphic to the
direct sum of the bundles $f_1^*(\cR_i)$ and $\cS_i$. Now, apply
Jouanolou's trick: we can find an affine variety $C'$ and a morphism $C'
\to C_1$ that is surjective, smooth, and has irreducible geometric
fibers. Naturally, the pull-back of $f_1^*(\cR_i)$ to $C'$ remains
trivial, and the decomposition $f_1^*(\cR_i) \oplus \cS_i$ 
pulls back to a direct sum decomposition of the trivial bundle
$C' \times V_i$, as desired. 
\end{proof}

\begin{proof}[Proof of Proposition~\ref{prop:struct2}]
Let $f$ be as in Lemma~\ref{lem:struct2-2}. This yields the desired set-up by Lemma~\ref{lem:struct2-1}.
\end{proof}

\subsection{Proof of Theorem~\ref{thm:small}}

Let $X$ be a closed $\GL$-subvariety of
$\bA^{\ulambda}=\bA^{\ul{\alpha}} \times \prod_{i=1}^r (V_i \otimes
\bA^{\beta_i})$, where $\Vert \alpha \Vert < d$, each $\beta_i$ has
size $d$, the $\beta_i$ are distinct, and the $V_i$ are
finite-dimensional vector spaces. 
Apply
Proposition~\ref{prop:struct2} to get an irreducible smooth affine variety
$C$, for every $1 \leq i \leq r$ a direct sum decomposition $\cV_i=\cR_i
\oplus \cS_i$ of the trivial bundle $\cV_i=C \times V_i$ where $\cR_i$
is itself trivial as a vector bundle, and an irreducible closed $\GL$-subvariety $W$
of $C \times \bA^{\ulambda}$ with $\ud(W)<d$ such that the Zariski
closure $W^+$ of $W + \prod_{i=1}^r (\cR_i \otimes \bA^{\beta_i})$
maps surjectively to $X$. Set 
\begin{displaymath}
Y:=W^+ \cap \left( \bA^{\ul{\alpha}} \times \prod_{i=1}^r \cS_i \otimes
\bA^{\beta_i} \right) \subseteq C \times \bA^{\ulambda},
\end{displaymath}
where the $r$-fold product is over $C$. By definition, $Y$ is closed.
Furthermore, the projections $\cV_i \to \cS_i$ with kernel $\cR_i$ induce a
dominant map $W \to Y$ of $\GL$-varieties, so that $Y$ is irreducible with
$\ud(Y) \leq \ud(W)<d$. Let $\psi_i \colon C
\times K^{d_i} \to \cR_i$ be a trivialization. Then the map of
$\GL$-varieties   
\begin{align*}
Y \times \prod_{i=1}^r K^{d_i} \otimes \bA^{\beta_i} &\to W^+ \\
(y,q_1,\ldots,q_r) &\mapsto y+(\psi_1 \otimes
\id_{\bA^{\beta_1}}(q_1),\ldots,\psi_r \otimes
\id_{\bA^{\beta_r}}(q_r))
\end{align*}
is surjective. Composing this with the surjection $W^+ \to X$ yields
the theorem.

\section{Proofs of the main theorems} \label{s:uni}

We are now ready to prove
Theorems~\ref{thm:stronguni}--\ref{thm:GeneralizedOrbit}.

\subsection{Improved unirationality}

The following theorem implies Theorem~\ref{thm:stronguni}, and provides a bit more information about the map.

\begin{theorem} \label{thm:uni-improved}
Let $X$ be an irreducible affine $\GL$-variety. Then there is a surjective map of $\GL$-varieties $B \times \bA^{\umu} \to X$ for some irreducible variety $B$ and pure tuple $\umu$. Moreover, one can take $\Vert \umu \Vert=\ud(X)$.
\end{theorem}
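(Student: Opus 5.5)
We argue by induction on $d=\ed(X)$, proving the statement in its strengthened form, with $\Vert \umu \Vert=\ud(X)$. If $d=0$ then $X$ is a finite dimensional irreducible variety with trivial action, and we take $B=X$ and $\umu$ empty. The inductive step splits into two cases: first the case $\ud(X)<\ed(X)$ (the core argument of \S\ref{ss:sketch}(b)), and then the general case, which we reduce to the first via Theorem~\ref{thm:small}.

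\emph{Case $\ud(X)<d$.} Fix a dominant $\phi_0\colon B_0\times\bA^{\umu}\to X$ with $B_0$ irreducible and $\Vert\umu\Vert=\ud(X)$. Among all morphisms $\phi\colon B\times\bA^{\unu}\to X$ with $B$ irreducible and $\Vert\unu\Vert\le\ud(X)$, choose one whose image is maximal. To make this choice legitimate, we use Chevalley's theorem \cite[Theorem 7.13]{polygeom} together with noetherianity \cite{draisma}: the closed complements $X\setminus\im$ are constructible, and we argue on a maximal image or minimal closed complement. The existence of such a maximal image is the main obstacle, since constructible sets in a noetherian space need not satisfy the ascending chain condition.

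I would address this as follows. If $\im(\phi)\ne X$, pick a generic point $x$ of an irreducible component of the closure of $X\setminus\im(\phi)$. Theorem~\ref{thm:extend2} gives an irreducible $Y\subset\bA^r\times\bA^{s\cdot\unu}$ and $\psi\colon Y\to X$ with $\im(\phi)\cup\{x\}\subset\im(\psi)$. Here $\ed(Y)\le\Vert\unu\Vert<d$, so induction yields a surjection $B'\times\bA^{\unu'}\to Y$ with $\Vert\unu'\Vert=\ud(Y)\le\Vert\unu\Vert$. Composing gives a new map of the allowed form whose image contains $\im(\phi)$ and also the generic point $x$. By constructibility, the new image then contains a dense open subset of that component of the complement. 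Hence the closure of the complement strictly decreases, as measured by the finite multiset of components and dimensions in the noetherian space. Noetherian induction on this closed set then terminates with a surjection, with no chain condition needed. The new map is dominant, and $\Vert\unu'\Vert\le\ud(X)$ forces equality by the minimality of $\ud(X)$, adding dummy trivial factors if needed.

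\emph{General case $\ud(X)=d$.} Apply Theorem~\ref{thm:small} to get a surjection $Y\times\bA^{\ul\tau}\to X$ with $Y$ irreducible, $\ud(Y)<d$, $\ed(Y)\le d$, and $|\tau_i|=d$. Run the previous case for $Y$; it uses induction only for varieties of embedding degree $<d$, so it is valid even when $\ed(Y)=d$. This produces a surjection $B\times\bA^{\unu}\to Y$ with $\Vert\unu\Vert=\ud(Y)<d$. Then $B\times\bA^{\unu}\times\bA^{\ul\tau}\to X$ is surjective, and its tuple has norm $d=\ud(X)$. Finally, $B$ may be replaced by an affine open cover piece if necessary, and purity of tuples is arranged by moving trivial factors into $B$.
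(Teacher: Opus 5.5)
Your proposal is correct and is essentially the paper's proof. The structure matches: induction on $\ed$; the case $\ud(X)<\ed(X)$ by enlarging the image with Theorem~\ref{thm:extend2} (rightly the scheme-theoretic version, since $x$ is a generic point) and applying induction to $Y$ because $\ed(Y)\le\Vert\unu\Vert<d$; and the case $\ud(X)=d$ via Theorem~\ref{thm:small}. Your descent on the $\GL$-stable closed set $\overline{X\setminus\im(\phi)}$ is exactly the paper's choice of a $\phi$ with maximal $\int(\im(\phi))$, because $X\setminus\int(\im(\phi))=\overline{X\setminus\im(\phi)}$. So DCC on $\GL$-stable closed sets suffices, and the multiset-of-dimensions bookkeeping should be dropped (dimensions here are typically infinite). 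Also drop the final remark about replacing $B$ by an affine open piece: it could destroy surjectivity and is not needed, since the statement does not require $B$ affine (Jouanolou's trick gives an affine $B$ if you want one).
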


\begin{proof}
We proceed by induction on the embedding degree. Thus suppose that $d$ is given and the theorem holds for $X$ with $\ed(X)<d$. We now prove the theorem for $X$ with $\ed(X) \le d$.

\textit{\textbf{Step 1:} the case $\ud(X)<d$.} Let $X$ be an irreducible
affine $\GL$-variety with $\ed(X) \le d$ and $\ud(X)<d$. By the
unirationality theorem, there is
a dominant morphism $\phi \colon B \times \bA^{\umu} \to X$ for some
irreducible affine variety $B$ and some pure tuple $\umu$ with $\Vert
\umu \Vert = \ud(X)$. Of all such $\phi$, choose one for which the
interior of $\im(\phi)$ is maximal. Such a maximal $\phi$ exists since
the ascending chain condition holds for $\GL$-stable open subsets of $X$
by noetherianity. We show that $\phi$ is surjective, which will establish
the theorem for $X$.

Suppose, by way of contradiction, that $\phi$ is not surjective. Let $x$ be the generic point of an irreducible component of $X \setminus \int(\im(\phi))$. By Theorem~\ref{thm:extend}, there exists an irreducible closed $\GL$-subvariety $Y$ of $\bA^r \times \bA^{s \cdot \umu}$, for some $r$ and $s$, and a morphism $\psi \colon Y \to X$ such that $\im(\phi) \cup \{x\} \subset \im(\psi)$. We have
\begin{displaymath}
\ed(Y) \le \Vert s \cdot \umu \Vert \le \Vert \umu \Vert = \ud(X)<d,
\end{displaymath}
and so the inductive hypothesis implies that the theorem holds for $Y$. We can thus choose a surjective morphism $\theta \colon C \times \bA^{\unu} \to Y$ for some irreducible affine variety $C$ and some tuple $\unu$ with $\Vert \unu \Vert=\ud(Y) \le \ed(Y) \le \ud(X)$. We have $\im(\phi) \cup \{x\} \subset \im(\psi \circ \theta)$. By Lemma~\ref{lem:interior} below, it follows that $\int(\im(\phi)) \cup \{x\} \subset \int(\im(\psi \circ \theta))$, contradicting the maximality of $\int(\im(\phi))$. We conclude that $\phi$ is surjective.

\textit{\textbf{Step 2:} the case $\ud(X)=d$.} Now let $X$ be an
irreducible affine $\GL$-variety with $\ed(X)=\ud(X)=d$. According to
Theorem~\ref{thm:small}, there exists a surjection $Y \times
\bA^{\ul{\tau}} \to X$ where $Y$ is an irreducible affine
$\GL$-variety with $\ed(Y) \le d$ and $\ud(Y) <d$, and $\ul{\tau}$ is
some tuple of partitions of size $d$. By Step~1, we have a surjection
$B \times \bA^{\umu} \to Y$ for some irreducible affine variety $B$
and tuple $\umu$ with $\Vert \umu \Vert < d$. We thus obtain a surjection $B \times \bA^{\umu \cup \ul{\tau}} \to X$, as required.
\end{proof}

Recall \cite[\S 7.4]{polygeom} that a subset of a $\GL$-variety is called \defn{$\GL$-constructible} if it is a finite union of locally closed $\GL$-stable subsets.

\begin{lemma} \label{lem:interior}
Let $X$ be an irreducible $\GL$-variety, let $A \subset X$ be a $\GL$-constructible subset, let $x$ be the generic point of an irreducible component of $X \setminus \int(A)$, and let $B \subset X$ be a $\GL$-constructible subset with $A \cup \{x\} \subset B$. Then $x \in \int(B)$.
\end{lemma}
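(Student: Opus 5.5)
The plan is to reduce the claim to a statement about the local structure of the constructible set $B$ near the point $x$. Let $Z$ be the irreducible component of $X \setminus \int(A)$ whose generic point is $x$; this is a closed $\GL$-stable irreducible subset. We aim to show that $B$ contains an open subset of $X$ meeting $Z$. Such an open set can be taken $\GL$-stable, by replacing it with the union of its translates, which stays inside $B$ because $B$ is $\GL$-stable. It then contains $x$, since every nonempty open subset of $Z$ contains the generic point. Because $\int(A) \subset \int(B)$, it suffices to find a nonempty open subset $U$ of $X$ with $U \subset B$ and $U \cap Z \neq \emptyset$.

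The first step is to localize. Let $F$ be the union of the other irreducible components of $X \setminus \int(A)$; by noetherianity there are finitely many. Then $X' := X \setminus F$ is open in $X$. In $X'$, the complement of $\int(A)$ is $Z' := Z \cap X'$, which is nonempty and irreducible with generic point $x$. So near $x$ we have $X' = \int(A) \sqcup Z'$. Next we use that $A$ is dense in $X$, since $\int(A)$ is a nonempty open subset of the irreducible $X$. Here I assume $\int(A) \neq \emptyset$; if it is empty, $Z = X$ and $x$ is the generic point of $X$. In that case $x \in B$ and $B$ constructible force $B$ to contain a dense open subset of $X$, so $x \in \int(B)$ directly.

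The main step is a constructibility argument at $x$. $B$ is a finite union of locally closed $\GL$-stable sets $B_j = O_j \cap F_j$, with $O_j$ open and $F_j$ closed. Since $x \in B$, some $B_j$ contains $x$, so $x \in O_j$ and $Z \subset F_j$. Then $B \supset O_j \cap F_j \supset O_j \cap Z$, and $B \supset A \supset \int(A)$. Hence on the open set $U := O_j \cap X'$ we get $U = (U \cap \int(A)) \cup (U \cap Z') \subset B$. Moreover $U$ is nonempty and contains $x$. This shows $x \in \int(B)$.

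The obstacle I expect is purely topological bookkeeping in the $\GL$ setting, where $X$ is not a noetherian topological space in the usual sense. We need three inputs: (i) $X \setminus \int(A)$ has finitely many irreducible components, (ii) the pieces in the decomposition of a $\GL$-constructible set can be taken $\GL$-stable locally closed, and (iii) irreducible $\GL$-stable closed sets have generic points. For these I would invoke noetherianity of $\GL$-stable closed subsets \cite{draisma} and the definition of $\GL$-constructibility from \cite[\S7.4]{polygeom}. All sets involved are $\GL$-stable, so working in the $\GL$-Zariski topology suffices. Note also that $\int$ is taken in that topology, consistent with its use in Theorem~\ref{thm:uni-improved}.
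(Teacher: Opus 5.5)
Your proof is correct. It rests on the same mechanism as the paper's proof, but it argues directly where the paper passes to the complement.

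\textbf{The paper's route.} Since $A \subset B$, the set $X \setminus B$ is a $\GL$-constructible subset of the closed set $X \setminus \int(A)$, and it does not contain $x$. Because $x$ is the generic point of a component of $X \setminus \int(A)$, it cannot lie in $\ol{X \setminus B} = X \setminus \int(B)$. That last step is the standard fact that the closure of a constructible subset of a closed set $F$ contains the generic point of a component of $F$ only if the constructible set itself contains it. The paper uses this fact without comment.

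\textbf{Your route.} You prove the needed instance of that fact by hand. You pick a locally closed piece $O_j \cap F_j$ of $B$ through $x$, excise the other components of $X \setminus \int(A)$, and exhibit $O_j \cap X'$ as an explicit open neighbourhood of $x$ inside $B$.

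\textbf{Comparison.} The paper's version is shorter. Yours is self-contained and shows exactly where finiteness of the set of components (noetherianity) enters. The paper relies on that finiteness silently, both in its proof and in the very phrase ``generic point of an irreducible component.''

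\textbf{Redundant steps.} Some of your steps can be dropped:
\begin{itemize}
\item Any open $U \subset B$ meeting $Z$ already contains $x$, so you need not make $U$ $\GL$-stable.
\item The case $\int(A) = \emptyset$ is covered by your main argument with $F = \emptyset$.
\item The density of $A$ is never used.
\end{itemize}

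\textbf{Which topology.} Irreducible components and generic points should be taken in the Zariski topology of the scheme $X$, not the $\GL$-topology. This causes no harm here, because the interior of a $\GL$-stable set is the same in both topologies.
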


\begin{proof}
The set $X \setminus B$ is a $\GL$-constructible subset of the closed set $X \setminus \int(A)$ which does not contain the point $x$, which is a generic point of an irreducible component of $X \setminus \int(A)$. It follows that $x$ does not belong to $\ol{X \setminus B} = X \setminus \int(B)$, and so $x \in \int(B)$.
\end{proof}

We note one corollary of the theorem.

\begin{corollary}
Let $X$ be an irreducible $\GL$-variety and let $Y$ be an irreducible closed $\GL$-subvariety. Then $\ud(Y) \le \ud(X)$.
\end{corollary}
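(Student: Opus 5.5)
The idea is to push a surjection onto $X$ down to a surjection onto $Y$ by pulling back along the closed embedding $Y \subset X$, and then to extract a dominant map to $Y$ from an irreducible component of the preimage.

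First, by Theorem~\ref{thm:uni-improved}, choose a surjective map of $\GL$-varieties $\phi \colon B \times \bA^{\umu} \to X$ with $B$ irreducible and $\Vert \umu \Vert = \ud(X)$. Consider the preimage $Z = \phi^{-1}(Y)$, with its reduced structure. This is a closed $\GL$-subvariety of $B \times \bA^{\umu}$, and by surjectivity of $\phi$ it maps onto $Y$. By noetherianity of $\GL$-varieties \cite{draisma}, $Z$ has finitely many irreducible components $Z_1, \ldots, Z_k$. Each component is $\GL$-stable, since $\GL$ is connected in the appropriate sense and permutes the finitely many components; this is the standard fact from \cite{polygeom}. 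Since $Y$ is irreducible and $Y = \bigcup_j \phi(Z_j)$, some $\phi(Z_j)$ is dense in $Y$. So $\phi|_{Z_j} \colon Z_j \to Y$ is a dominant map of $\GL$-varieties.

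Next, $Z_j$ is an irreducible closed $\GL$-subvariety of $B \times \bA^{\umu}$. Embedding $B$ into some $\bA^r$ gives $\ed(Z_j) \le \Vert \umu \Vert = \ud(X)$. By the (original) unirationality theorem, $\ud(Z_j) \le \ed(Z_j)$. We therefore have a dominant map $B' \times \bA^{\unu} \to Z_j$ with $\Vert \unu \Vert \le \ud(X)$. Composing with $\phi|_{Z_j}$ yields a dominant map $B' \times \bA^{\unu} \to Y$, so $\ud(Y) \le \ud(X)$.

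The only delicate point is the definition of $\ud$, which asks for a tuple with $\Vert \unu \Vert = d$ exactly. This is handled by monotonicity: one can pad $\unu$ by a factor on which the map is trivial, i.e.\ precompose with a projection $B' \times \bA^{\unu \cup [\kappa]} \to B' \times \bA^{\unu}$, where $\kappa$ is a partition of the required size. This keeps the map dominant, so the minimum is really over $\Vert \unu \Vert \le d$. The second point requiring care is that the irreducible components of $Z$ are $\GL$-stable, but this is standard. I expect no real obstacle: the essential input is the surjectivity in Theorem~\ref{thm:uni-improved}, since with a merely dominant $\phi$ the preimage of $Y$ could be empty.
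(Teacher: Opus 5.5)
Your proposal is correct and is essentially the paper's own proof. You take the surjection $B \times \bA^{\umu} \to X$ from Theorem~\ref{thm:uni-improved} with $\Vert \umu \Vert = \ud(X)$, choose an irreducible component of $\phi^{-1}(Y)$ that dominates $Y$, and chain $\ud(Y) \le \ud(Z_1) \le \ed(Z_1) \le \ud(X)$. Your padding remark is harmless but not needed: a dominant map with $\Vert \unu \Vert \le \ud(X)$ already gives $\ud(Y) \le \ud(X)$ directly, because $\ud$ is a minimum.
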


\begin{proof}
Let $d=\ud(X)$. Applying Theorem~\ref{thm:uni-improved}, let $\phi \colon B \times \bA^{\umu} \to X$ be a surjective map of $\GL$-varieties, where $\Vert \umu \Vert=d$. Let $Z=\phi^{-1}(Y)$, which is a closed $\GL$-subvariety of $B \times \bA^{\umu}$. Since $\phi$ is surjective, it follows that the induced map $Z \to Y$ is surjective, and so there is some irreducible component $Z_1$ of $Z$ that maps dominantly to $Y$. We have $\ud(Z_1) \le \ed(Z_1) \le d$ and, since $Z_1 \to Y$ is dominant, $\ud(Y) \le \ud(Z_1)$. The result thus follows.
\end{proof}

\subsection{Existence of curves}

In \cite[Theorem~1.3]{imgclosure}, we saw that in an irreducible
$\GL$-variety one can join a point to an open set by an irreducible
curve. We now prove the much stronger result Theorem~\ref{thm:Curves}
about joining any two points by a curve:

\begin{proof}[Proof of Theorem~\ref{thm:Curves}]
Let $x,y$ be $K$-points of the irreducible $\GL$-variety $X$.
By Theorem~\ref{thm:uni-improved} there exists a surjection $\phi \colon
B \times \bA^{\umu} \to X$ of $\GL$-varieties with $B$ an irreducible
variety. Let $\tilde{x}$ and $\tilde{y}$ be inverse images of $x$ and
$y$. Let $i \colon C \to B \times \bA^{\umu}$ be a map from an irreducible
curve with $\tilde{x}, \tilde{y} \in \im(i)$; it is easy to find such
a map, see \cite[Lemma 5.2]{imgclosure}. Now $j=\phi \circ i$ is a
curve passing through $x$ and $y$.
\end{proof}

\subsection{Irreducibility of mapping spaces}

We now prove Theorem~\ref{thm:MappingSpace}.

\begin{proof}[Proof of Theorem~\ref{thm:MappingSpace}]
Fix a pure tuple $\ulambda$. We show that the mapping space
$\cM_{\ulambda}(X)$ is irreducible. 
Invoking Theorem~\ref{thm:uni-improved}, choose a surjective morphism $\phi \colon B \times \bA^{\umu} \to X$ with $B$ irreducible and $\umu$ pure. We claim that the induced map
\begin{displaymath}
\phi_* \colon \cM_{\ulambda}(B \times \bA^{\umu}) \to \cM_{\ulambda}(X)
\end{displaymath}
is surjective. Since $\cM_{\ulambda}(B \times \bA^{\umu})=B
\times \cM_{\ulambda}(\bA^{\umu})$ is a vector bundle over
$B$ by~\cite[Proposition 2.3(c)]{polygeom}, and thus irreducible, this will show that $\cM_{\ulambda}(X)$ is irreducible.

Let $x \in \cM_{\ulambda}(X)$ be a (scheme-theoretic) point, let $C
\subset \cM_{\ulambda}(X)$ be the corresponding irreducible
closed subvariety, and let $\psi \colon C \times
\bA^{\ulambda} \to X$ be the canonical map. By~\cite[Proposition 6.4]{polygeom}, we can find a commutative diagram
\begin{displaymath}
\xymatrix@C=4em{
D \times \bA^{\ulambda} \ar[r]^-{\alpha \times \id} \ar[d] &
C \times \bA^{\ulambda} \ar[d]^{\psi} \\
B \times \bA^{\umu} \ar[r]^{\phi} & X }
\end{displaymath}
where $\alpha$ is dominant. This yields a commutative diagram
\begin{displaymath}
\xymatrix@C=4em{
D \ar[r]^-{\alpha} \ar[d] & C \ar[d] \\
\cM_{\ulambda}(B \times \bA^{\umu}) \ar[r]^{\phi_*} & \cM_{\ulambda}(X) }
\end{displaymath}
Via the top path, the generic point of $D$ maps to $x \in \cM_{\ulambda}(X)$. Thus, via the other path, we see that $x \in \im(\phi_*)$.
\end{proof}

\begin{remark}
In Theorem~\ref{thm:uni-improved}, one can now take $B=\cM_{\umu}(X)$.
\end{remark}

\begin{remark} \label{rmk:uni-not-ac}
Suppose $K$ is not algebraically closed, and let $X$ be a
geometrically irreducible affine $\GL$-variety. Since formation of
mapping spaces is compatible with base change, we see that
$\cM_{\ulambda}(X)$ is geometrically irreducible for any $\ulambda$.
For appropriate $\umu$, the map $\cM_{\umu}(X) \times \bA^{\umu} \to
X$ is surjective over $\ol{K}$, and thus surjective. We thus see that
Theorem~\ref{thm:uni-improved} holds over $K$, provided that we assume
that $X$ is geometrically irreducible. 
\end{remark}

\subsection{Type space and the space of generalized orbits} \label{ss:type}

Let $X$ be an irreducible $\GL$-variety. Recall from \S \ref{ss:main} that $X^{\orb}$ is the space of generalized orbits on $X$. The space $X^\orb$ is filtered by subspaces $X^{\orb}_{\umu}$, indexed by tuples $\umu$, as follows. The generalized orbit of $x \in X$ is in $X^\orb_{\umu}$ if and only if there exists a finite dimensional irreducible affine variety $B$ and a dominant $\GL$-equivariant morphism $B \times \bA^{\umu} \to \ol{\GL \cdot x}$. (See \cite[\S 8]{polygeom} for the description of this set in terms of types, as described in the introduction.) The set $X^\orb_{\umu}$ inherits the subspace topology from $X^\orb$. By the unirationality theorem, $X^\orb=\bigcup_{\umu} X^\orb_{\umu}$. 

Let $\Gamma_{\umu} \subseteq \cM_{\umu}(\bA^{\umu})$ denote the automorphism group of $\bA^{\umu}$; by \cite{polygeom} this is a finite-dimensional, connected, affine algebraic group, and it acts naturally on the finite dimensional variety $\cM_{\umu}(X)$. Let $X^{\type}_{\umu}$ be the set of scheme-theoretic points of $\cM_{\umu}(X)$ fixed by $\Gamma_{\umu}$. The space $X^{\type}_{\umu}$ inherits the subspace topology from $\cM_{\umu}(X)$. There is a natural surjective map $\cM_{\ulambda}(X) \to X^{\type}_{\ulambda}$ that maps a point to the generic point of the closure of its $\Gamma_{\ulambda}$-orbit, and it turns out that the (subspace) topology of $X^{\type}_{\ulambda}$ agrees with the quotient topology coming from this map; see \cite[\S8.7]{polygeom}.

In \cite[\S8.8]{polygeom} we introduce the map $\rho_{\ulambda} \colon X^{\type}_{\ulambda} \to X^\orb_{\ulambda}$ as follows: given $x \in X^{\type}_{\ulambda}$, let $B$ be its closure in $\cM_{\ulambda}(X)$, and let $\rho_{\ulambda}(x)$ be the image of the generic point under the canonical morphism $B \times \bA^{\ulambda} \to X$. By \cite[Theorem 8.25]{polygeom}, $\rho_{\ulambda}$ is a continuous bijection. We can now prove more, namely, the following more explicit version of Theorem~\ref{thm:GeneralizedOrbit}.

\begin{theorem}
Let $X$ be an affine $\GL$-variety and let $\ulambda$ be a pure tuple. Then the continuous bijection $\rho_{\ulambda} \colon X^{\type}_{\ulambda} \to X^{\orb}_{\ulambda}$ is a homeomorphism.
\end{theorem}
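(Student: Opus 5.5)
Since $\rho_{\ulambda}$ is already a continuous bijection \cite[Theorem 8.25]{polygeom}, it suffices to show that it is closed, i.e., that $\rho_{\ulambda}$ maps closed subsets of $X^{\type}_{\ulambda}$ to closed subsets of $X^{\orb}_{\ulambda}$. Recall that $X^{\type}_{\ulambda}$ carries the quotient topology from $q\colon\cM_{\ulambda}(X)\to X^{\type}_{\ulambda}$, and $X^{\orb}_{\ulambda}$ the subspace topology of the quotient topology from $\pi\colon X\to X^{\orb}$. A closed subset $F\subseteq X^{\type}_{\ulambda}$ is therefore of the form $q(Z)$ with $Z\subseteq\cM_{\ulambda}(X)$ closed and $\Gamma_{\ulambda}$-stable. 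The goal is to find a closed $\GL$-stable subset $X_Z\subseteq X$ with $\pi(X_Z)\cap X^{\orb}_{\ulambda}=\rho_{\ulambda}(F)$.

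The natural candidate is $X_Z:=\ol{\im(Z\times\bA^{\ulambda}\to X)}$, the closure of the image of the canonical map. The inclusion $\rho_{\ulambda}(F)\subseteq\pi(X_Z)$ is clear from the definition of $\rho_{\ulambda}$: for $z\in F$, the image of the generic point of $\ol{\{z\}}\times\bA^{\ulambda}$ lies in $X_Z$. Thus everything reduces to the converse. Let $y\in X_Z$ be a point whose generalized orbit lies in $X^{\orb}_{\ulambda}$, so there is a dominant map $B\times\bA^{\ulambda}\to Y:=\ol{\GL\cdot y}$. We must show that the type point $\rho_{\ulambda}^{-1}(\pi(y))$, namely the generic point $w$ of the closure of the image of $B$ in $\cM_{\ulambda}(Y)\subseteq\cM_{\ulambda}(X)$, lies in $Z$.

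This converse is where Theorem~\ref{thm:uni-improved} enters, and it is the main obstacle. Let $Z_1,\dots,Z_k$ be the irreducible components of $Z$, and set $X_i=\ol{\im(Z_i\times\bA^{\ulambda})}$. Since $Y$ is irreducible, $Y\subseteq X_i$ for some $i$. Applying Theorem~\ref{thm:uni-improved} to $X_i$ gives a surjection from some $B'\times\bA^{\umu}$ onto $X_i$. Equivalently, by the remark after Theorem~\ref{thm:MappingSpace}, $\cM_{\umu}(X_i)\times\bA^{\umu}\to X_i$ is surjective. By the argument in the proof of Theorem~\ref{thm:MappingSpace}, this gives surjectivity of $\cM_{\ulambda}(\cM_{\umu}(X_i)\times\bA^{\umu})\to\cM_{\ulambda}(X_i)$.

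This lets us lift $w\in\cM_{\ulambda}(X_i)$. The remaining difficulty is to show that $w$ lies in $Z_i$, and not merely in $\cM_{\ulambda}(X_i)$. Since $\cM_{\ulambda}(X_i)$ is irreducible by Theorem~\ref{thm:MappingSpace}, the plan is to show that it is the closure of $\Gamma_{\ulambda}\cdot(\text{its points arising from } Z_i)$. Concretely, I would show that every $\ulambda$-map into $X_i$ is a specialization of maps factoring through $Z_i\times\bA^{\ulambda}$ after precomposing with an element of $\cM_{\ulambda}(\bA^{\ulambda})$. Here I would use that $\Vert\ulambda\Vert$ bounds the degrees involved, and the limit description \cite[Theorem 6.6]{imgclosure} as in Proposition~\ref{prop:mapping}.

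Granting this, we get $w\in\ol{\Gamma_{\ulambda}Z_i}$. Since $w$ is $\Gamma_{\ulambda}$-fixed, it is determined up to the orbit-closure relation. Because $Z$ is closed and $\Gamma_{\ulambda}$-stable, it follows that $w\in Z$. Hence $\pi(X_Z)\cap X^{\orb}_{\ulambda}=\rho_{\ulambda}(F)$ is closed, and $\rho_{\ulambda}$ is a homeomorphism. I expect this density step to be the delicate point. If it fails as stated, I would instead try to replace $Z$ by the closed set $\{w\in\cM_{\ulambda}(X):\ \im(w)\subseteq X_Z\}$. I would then show that this set intersects the type points exactly in $F$, using semicontinuity of the dominance of $B\times\bA^{\ulambda}\to\ol{\GL y}$ over $\cM_{\ulambda}$.
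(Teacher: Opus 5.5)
You have the right reduction but not the key step. Showing $\rho_{\ulambda}$ closed comes down to showing that a type point $w\in\cM_{\ulambda}(Y)$ with $Y\subseteq X_i$ lies in $Z_i$. That is the paper's ``$\rho_{\ulambda}^{-1}$ preserves specialization'' phrased with closed sets. At exactly this step the proof stops:

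\begin{itemize}
\item The density claim $\cM_{\ulambda}(X_i)=\ol{\Gamma_{\ulambda}\cdot Z_i}$ is simply granted.
\item The route you suggest for it (degree bounds, bounded-denominator limits from \cite[Theorem 6.6]{imgclosure}, precomposition with elements of $\cM_{\ulambda}(\bA^{\ulambda})$) is not carried out, and it is unclear what it would look like.
\item The lift through $\cM_{\ulambda}(\cM_{\umu}(X_i)\times\bA^{\umu})$ is never used afterwards.
\item Your fallback set $\{w:\im(w)\subseteq X_Z\}$ is just $\cM_{\ulambda}(X_Z)$, so showing its type points are exactly $F$ is the same claim again.
\end{itemize}

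The missing idea uses two facts you already have: irreducibility of $\cM_{\ulambda}(X_i)$ (Theorem~\ref{thm:MappingSpace}) and injectivity of $\rho_{\ulambda}$.
\begin{itemize}
\item $\cM_{\ulambda}(X_i)$ is an irreducible, closed, $\Gamma_{\ulambda}$-stable subset of $\cM_{\ulambda}(X)$. So its generic point $g$ is $\Gamma_{\ulambda}$-fixed, i.e.\ $g\in X^{\type}_{\ulambda}$.
\item The map $\cM_{\ulambda}(X_i)\times\bA^{\ulambda}\to X_i$ is dominant, since it is already dominant on $Z_i\times\bA^{\ulambda}$. Hence $\rho_{\ulambda}(g)$ is the generic orbit of $X_i$.
\item The generic point $z_i$ of $Z_i$ is a type point, because the connected group $\Gamma_{\ulambda}$ stabilizes each component of $Z$. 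Its image $\rho_{\ulambda}(z_i)$ is also the generic orbit of $X_i$.
\item Injectivity gives $g=z_i$, i.e.\ $\cM_{\ulambda}(X_i)=Z_i$. Therefore $w\in\cM_{\ulambda}(Y)\subseteq\cM_{\ulambda}(X_i)=Z_i\subseteq Z$.
\end{itemize}
This is exactly the paper's argument. There it is phrased as: $Z^{\type}_{\ulambda}\to Z^{\orb}_{\ulambda}$ is a continuous bijection of irreducible sober spaces, so it matches generic points (Lemma~\ref{lem:generic}). Strong unirationality enters only through irreducibility of mapping spaces; no limit arguments are needed. With this fix, your closed-set formulation proves closedness of $\rho_{\ulambda}$ directly, rather than only preservation of specializations.

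One harmless inaccuracy: $\rho_{\ulambda}^{-1}(\pi(y))$ is the generic point of $\cM_{\ulambda}(Y)$, by the same argument. It is not the generic point of the closure of the image of an arbitrary $B$, which need not be $\Gamma_{\ulambda}$-fixed (e.g.\ when $B$ is a point). Only $w\in\cM_{\ulambda}(Y)$ is used, so nothing breaks.
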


\begin{proof}
Let $x,y \in X^{\orb}_{\ulambda}$ be points such that $y$ is a
specialization of $x$. Let $x',y' \in X^{\type}_{\ulambda}$ be the inverse images of $x$ and $y$ under $\rho$. We must show that $y'$ is a specialization of $x'$.

Let $Z \subset X$ be the Zariski closure of $x$. Consider the diagram
\begin{displaymath}
\xymatrix{
Z^{\type}_{\ulambda} \ar[r]^{\rho_{\ulambda}} \ar[d] & Z^{\orb}_{\ulambda} \ar[d] \\
X^{\type}_{\ulambda} \ar[r]^{\rho_{\ulambda}} & X^{\orb}_{\ulambda} }
\end{displaymath}
The vertical maps are induced by the inclusion $Z \to X$, and easily seen to be continuous; moreover, the diagram is commutative. We regard $x'$ and $y'$ as elements of $Z_{\ulambda}^{\type}$; note that right vertical map is an injection, and so the left vertical map is too since the horizontal maps are bijections.

Now, $\cM_{\ulambda}(Z)$ is irreducible
(Theorem~\ref{thm:MappingSpace}), and so
$Z_{\ulambda}^{\type}$, being a quotient of
$\cM_{\ulambda}(Z)$, is irreducible as well. Thus $\rho_{\ulambda}
\colon Z_{\ulambda}^{\type} \to Z_{\ulambda}^{\orb}$ is a continuous
bijection of irreducible sober spaces. Since $x'$ maps to the generic
point $x$ of $Z_{\ulambda}^{\orb}$, it follows from the
Lemma~\ref{lem:generic} below that $x'$ is the generic point of
$Z_{\ulambda}^{\type}$. Thus $y'$ is a specialization of $x'$. Since
the map $Z_{\ulambda}^{\type} \to X_{\ulambda}^{\type} \to X^{\type}$
is continuous, this relation holds in $X^{\type}_{\ulambda}$ as well.
\end{proof}

\begin{lemma} \label{lem:generic}
Let $f \colon A \to B$ be a continuous bijection of irreducible sober spaces. Then $f$ maps the generic point of $A$ to the generic point of $B$.
\end{lemma}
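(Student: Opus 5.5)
Let $\eta_A$ denote the generic point of $A$ and $\eta_B$ the generic point of $B$. The plan is to show directly that $f(\eta_A)$ is dense in $B$; sobriety of $B$ then forces $f(\eta_A)=\eta_B$. We argue in three steps: first that $f(\eta_A)$ is a generic point of $B$, then that it coincides with $\eta_B$ by uniqueness, and then we note where the bijectivity hypothesis enters.

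To see that $f(\eta_A)$ is a generic point, use continuity. Since $\eta_A$ is dense in $A$, we have $A=\ol{\{\eta_A\}}$, and continuity of $f$ gives
\begin{displaymath}
f(A)=f\bigl(\ol{\{\eta_A\}}\bigr)\subseteq \ol{\{f(\eta_A)\}}.
\end{displaymath}
Surjectivity of $f$ gives $f(A)=B$. Hence $\ol{\{f(\eta_A)\}}=B$, so $f(\eta_A)$ is a generic point of $B$.

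For the identification, recall that in a sober space every irreducible closed subset has a \emph{unique} generic point. Since $B$ itself is irreducible and closed, its generic point is unique. We have just shown that $f(\eta_A)$ is a generic point of $B$, so $f(\eta_A)=\eta_B$. This completes the argument.

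Only surjectivity and continuity are used, and sobriety is needed only on $B$. There is no serious obstacle. The one point to check is that ``irreducible sober space'' is meant to guarantee a unique generic point; this is the standard definition, under which every irreducible closed subset has exactly one generic point.
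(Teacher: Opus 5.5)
Your proof is correct and is essentially the paper's argument. The paper phrases the continuity step as ``$f^{-1}\bigl(\overline{\{f(a)\}}\bigr)$ is a closed set containing the generic point $a$, hence all of $A$'', which is equivalent to your inclusion $f(\overline{S})\subseteq\overline{f(S)}$. You are also right that only surjectivity of $f$ and uniqueness of generic points in $B$ are used, even though the paper invokes bijectivity.
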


\begin{proof}
Let $a \in A$ be the generic point and let $Z \subset B$ be the closure of $\{f(a)\}$. Then $f^{-1}(Z)$ is a closed subset of $A$ containing $a$, and is therefore all of $A$. Since $f$ is a bijection, it follows that $Z=B$. Thus $f(a)$ is dense in $B$, and therefore the generic point.
\end{proof}

\begin{example}
Let $X=\bA^{[(2)]}$ be the space of quadratic forms. The generalized orbits are in bijection with $\{0,1,2,\ldots,\infty\}$ via rank. We thus identify $X^{\orb}$ with the set $\{0,1,\ldots,\infty\}$. It is not difficult to see that the closed sets are exactly those of the form $\{0,1,\ldots,n\}$ for some natural number $n$, together with $X^{\orb}$ itself. In particular, we note that the set of generalized orbits of finite rank is dense in $X^{\orb}$.

Let $\umu=[(1),(1)]$. The mapping space $\cM_{\umu}(X)$ is three-dimensional: for any field extension $\Omega \supseteq K$, an $\Omega$-valued point is of the form $(f,g) \mapsto af^2+bfg+cg^2$ where $a,b,c \in \Omega$. Here $\Gamma_{\umu}=\GL_2$, acting by replacing $(f,g)$ by two linear combinations. The only $\Gamma_{\umu}$-fixed points in $\cM_{\umu}(X)$ are: the generic point, the generic point of the subscheme where the discriminant $b^2-4ac$ is zero, and the point with $a=b=c=0$. The space $X^{\orb}_{\umu}$ is $\{0,1,2\}$. It is not difficult to see directly that $\rho_{\umu}$ is a homeomorphism in this case.

Here, the theorem may not be so surprising. This lack of surprise is due to the fact that in finite dimensions, $\GL_n$ has only finitely many orbits on quadrics in $n$ variables. But it is truly remarkable that the space of generalized orbits in \emph{any} $\GL$-variety admits such a description using finite-dimensional pieces!
\end{example}


\begin{thebibliography}{BBOV2}

\bibitem[AH]{ah}
Tigran Ananyan, Melvin Hochster. Small subalgebras of polynomial rings and Stillman's conjecture. 
\textit{J.~Amer.~Math.~Soc.} \textbf{33} (2020), no.~1, pp.~2910--309. \DOI{10.1090/jams/932} \arxiv{1610.09268v1}

\bibitem[BBOV1]{bbov}
Edoardo Ballico, Arthur Bik, Alessandro Oneto, Emanuele Ventura. Strength and slice rank of
forms are generically equal. \textit{Israel J.~Math.} (2022). 
\DOI{10.1007/s11856-022-2397-0} \arxiv{2102.11549}

\bibitem[BBOV2]{bbov2}
Edoardo Ballico, Arthur Bik, Alessandro Oneto, Emanuele Ventura. The set of forms with
bounded strength is not closed. \textit{Compt.~Rend.~Math.} \textbf{360} (2022), pp.~371--380. 
\DOI{10.5802/crmath.302} \arxiv{2012.01237}

\bibitem[BDES1]{polygeom}
Arthur Bik, Jan Draisma, Rob H.~Eggermont, Andrew Snowden. The
geometry of polynomial representations. \textit{Int.\ Math.\ Res.\
Not.\ IMRN} \textbf{16} (2023), pp.~14131--14195. 
\DOI{10.1093/imrn/rnac220} \arxiv{2105.12621}

\bibitem[BDES2]{imgclosure}
Arthur Bik, Jan Draisma, Rob H.~Eggermont, Andrew Snowden. Uniformity
for limits of tensors. \arxiv{2305.19866}

\bibitem[BDS]{poschar}
Arthir Bik, Jan Draisma, Andrew Snowden. The geometry of polynomial
representations in positive characteristic. \textit{Math.~Z.}
\textbf{310}(1) (2025), paper no.~13, 39 pp. \DOI{10.1007/s00209-025-03720-y} \arxiv{2406.07415}

\bibitem[DES]{des} Harm Derksen, Rob H.~Eggermont, Andrew Snowden. Topological noetherianity for cubic polynomials. \textit{Algebra Number Theory} \textbf{11} (2017), no.~9, pp.~2197--2212. \DOI{10.2140/ant.2017.11.2197} \arxiv{1701.01849}

\bibitem[Dr]{draisma}
Jan Draisma. Topological Noetherianity of polynomial functors. \textit{J.\ Amer.\ Math.\ Soc.} \textbf{32} (2019), no.~3, pp.~691--707. \DOI{10.1090/jams/923} \arxiv{1705.01419}

\bibitem[KaZ]{kazi}
David Kazhdan, Tamar Ziegler. 
Applications of algebraic combinatorics to algebraic geometry. 
\textit{Indag.\ Math., New Ser.} \textbf{32} (2021), no.~6, pp.~1412--1428. \DOI{10.1016/j.indag.2021.09.002} \\\arxiv{2005.12542}

\bibitem[La]{landsberg}
Joseph M.~Landsberg. \textit{Tensors: geometry and applications.} Graduate Studies in Mathematics 128, American Mathematical Society, Providence, RI, 2012.

\bibitem[NSS]{sym2noeth}
Rohit Nagpal, Steven V Sam, Andrew Snowden. Noetherianity of some degree two twisted commutative algebras. \textit{Selecta Math.\ (N.S.)} \textbf{22} (2016), no.~2, pp.~913--937. \DOI{10.1007/s00029-015-0205-y} \arxiv{1501.06925}

\bibitem[S]{schmidt}
W.~M.~Schmidt. The density of integer points on homogeneous varieties. \textit{Acta Math.}
\textbf{154} (1985), no.~3--4, pp.~243--296. \DOI{10.1007/BF02392473}

\bibitem[Stacks]{stacks}
Stacks Project. {\tiny\url{http://stacks.math.columbia.edu}} (accessed May, 2026).

\bibitem[Wa]{wadsworth}
Adrian R.~Wadsworth. Hilbert subalgebras of finitely generated algebras. \textit{J.\ Algebra} \textbf{43} (1976), pp.~298--304. \DOI{10.1016/0021-8693(76)90161-7}

\end{thebibliography}
\end{document}